\documentclass[12pt]{article}
\usepackage{graphicx}
\usepackage{amsmath,amsthm,amssymb,enumerate}%, esint}
\usepackage{euscript,mathrsfs}
\usepackage{color}
\usepackage{dsfont}
\usepackage{multirow}
\usepackage{multicol}
\usepackage{url}
\usepackage[citecolor=blue,colorlinks=true]{hyperref}
\usepackage[left=2cm,right=2cm,top=3.5cm,bottom=3.5cm]{geometry}
\usepackage{color}
\usepackage[framemethod=tikz]{mdframed}
\allowdisplaybreaks

\usepackage{cancel}
\usepackage{soul}

\catcode`\@=11 \@addtoreset{equation}{section}

\catcode`\@=12

\usepackage{subcaption}

\newtheorem{Theorem}{Theorem}[section]
\newtheorem{Proposition}[Theorem]{Proposition}
\newtheorem{Lemma}[Theorem]{Lemma}
\newtheorem{Corollary}[Theorem]{Corollary}

\theoremstyle{definition}
\newtheorem{Definition}[Theorem]{Definition}

\newtheorem{Remark}[Theorem]{Remark}

\newcommand{\bTheorem}[1]{
	\begin{Theorem} \label{T#1} }
	\newcommand{\eT}{\end{Theorem}}

\newcommand{\bProposition}[1]{
	\begin{Proposition} \label{P#1}}
	\newcommand{\eP}{\end{Proposition}}

\newcommand{\bLemma}[1]{
	\begin{Lemma} \label{L#1} }
	\newcommand{\eL}{\end{Lemma}}

\newcommand{\bCorollary}[1]{
	\begin{Corollary} \label{C#1} }
	\newcommand{\eC}{\end{Corollary}}

\newcommand{\bRemark}[1]{
	\begin{Remark} \label{R#1} }
	\newcommand{\eR}{\end{Remark}}

\newcommand{\bDefinition}[1]{
	\begin{Definition} \label{D#1} }
	\newcommand{\eD}{\end{Definition}}

\newcommand{\Ds}{\mathbb{D}_x}

\newcommand{\jump}[1]{\left[ \left[ #1 \right] \right]}

\newcommand{\vrh}{\vr_h}

\newcommand{\ds}{\,\mathrm{d}S_x}

\newcommand{\bFormula}[1]{
	\begin{equation} \label{#1}}
	\newcommand{\eF}{\end{equation}}

\newcommand{\timezone}{(0,T)}
\newcommand{\timezoneA}{0,T}

\newcommand{\mesh}{\mathcal{T}_h}

\newcommand{\facesK}{\mathcal{E}(K)}
\newcommand{\vuh}{\vu_h}

\newcommand{\TS}{\Delta t}

\newcommand{\Ov}[1]{\overline{#1}}

\newcommand{\Curl}{{\rm curl}_x}

\newcommand{\aleq}{\stackrel{<}{\sim}}

\newcommand{\Un}[1]{\underline{#1}}
\newcommand{\vr}{\varrho}

\newcommand{\tvr}{\wtilde \vr}
\newcommand{\tvu}{{\wtilde u}}
\newcommand{\tvt}{\wtilde \vt}

\newcommand{\vt}{\vartheta}
\newcommand{\vu}{\vc{u}}
\newcommand{\vm}{\vc{m}}

\newcommand{\vn}{\vc{n}}

\newcommand{\vc}[1]{{\bf #1}}

\newcommand{\Div}{{\rm div}_x}
\newcommand{\Grad}{\nabla_x}

\newcommand{\dx}{\,{\rm d} {x}}

\newcommand{\dt}{\,{\rm d} t }

\newcommand{\dxdt}{\dx  \dt}
\newcommand{\intO}[1]{\int_{\Omega} #1 \dx}
\newcommand{\intQ}[1]{\int_{{\Omega}} #1  \dx}

\newcommand{\intOB}[1]{\int_{\Omega} \left( #1 \right)  \dx}

\newcommand{\intTd}[1]{\int_{\mathbb{T}^d} #1  \dx}

\newcommand{\intTO}[1]{\int_0^T \int_{\Omega} #1  \dxdt}
\newcommand{\intTOB}[1]{ \int_0^T \int_{\Omega} \left( #1 \right)  \dxdt}

\newcommand{\vv}{\vc{v}}

\newcommand{\D}{{\rm d}}
\newcommand{\bD}{\mathbb{D}}

\newcommand{\R}{\mathbb{R}}

\newcommand{\I}{\mathbb{I}}
\newcommand{\bS}{\mathbb{S}}

\newcommand{\br}{ \nonumber \\ }

\newcommand{\cred}{\color{red}}

\def\softd{{\leavevmode\setbox1=\hbox{d}%
		\hbox to 1.05\wd1{d\kern-0.4ex{\char039}\hss}}}
\definecolor{Cgrey}{rgb}{0.85,0.85,0.85}
\definecolor{Cblue}{rgb}{0.50,0.85,0.85}
\definecolor{Cred}{rgb}{1,0,0}
\definecolor{fancy}{rgb}{0.10,0.85,0.10}
\definecolor{amaranth}{rgb}{0.9, 0.17, 0.31}

\newcommand{\cblue}{\color{blue}}

\newcommand\Cbox[2]{%
	\newbox\contentbox%
	\newbox\bkgdbox%
	\setbox\contentbox\hbox to \hsize{%
		\vtop{
			\kern\columnsep
			\hbox to \hsize{%
				\kern\columnsep%
				\advance\hsize by -2\columnsep%
				\setlength{\textwidth}{\hsize}%
				\vbox{
					\parskip=\baselineskip
					\parindent=0bp
					#2
				}%
				\kern\columnsep%
			}%
			\kern\columnsep%
		}%
	}%
	\setbox\bkgdbox\vbox{
		\color{#1}
		\hrule width  \wd\contentbox %
		height \ht\contentbox %
		depth  \dp\contentbox
		\color{black}
	}%
	\wd\bkgdbox=0bp%
	\vbox{\hbox to \hsize{\box\bkgdbox\box\contentbox}}%
	\vskip\baselineskip%
}

\mdfdefinestyle{MyFrame}{%
	linecolor=black,
	outerlinewidth=1pt,
	roundcorner=5pt,
	innertopmargin=\baselineskip,
	innerbottommargin=\baselineskip,
	innerrightmargin=10pt,
	innerleftmargin=10pt,
	backgroundcolor=white!20!white}

\newcommand{\wtilde}{\widetilde}

\newcommand{\pd}{\partial}
\newcommand{\Divh}{{\rm div}_h}
\newcommand{\Gradh}{\nabla_h}
\newcommand{\Curlh}{{\rm curl}_h}
\newcommand{\Laph}{\Delta_h}

\newcommand{\Gradd}{\nabla_{\faces}}
\newcommand{\faces}{\mathcal{E}}

\newcommand{\Divmesh}{{\rm div}_{\mathcal{T}}}
\newcommand{\pdedge}{\eth_ \faces}
\newcommand{\pdedgei}{\eth_ \faces^{(i)}}

\newcommand{\pdmesh}{\eth_{\cal T}}
\newcommand{\pdmeshi}{\eth_{\cal T}^{(i)}}

\newcommand{\bI}{\mathbb{I}}
\newcommand{\abs}[1]{{\left| #1 \right|}}
\newcommand{\vth}{\vt_h}
\newcommand{\norm}[1]{\left\lVert#1\right\rVert}
\newcommand{\avs}[1]{ \left\{\hspace{-3pt}\left\{ #1 \right\}\hspace{-3pt} \right\} }
\newcommand{\myangle}[1]{\langle #1\rangle}
\newcommand{\EB}[1]{E_B\left( #1 \right)}

\newcommand{\facesint}{\faces_{\rm int}}
\newcommand{\facesext}{\faces_{\rm ext}}
\newcommand{\intfacesint}[1]{\int_{\facesint}{ #1 \ds}}

\newcommand{\intfacesext}[1]{\int_{\facesext}{ #1 \ds}}

\newcommand{\intT}[1]{\int_{0}^{T} #1 \dt}

\newcommand{\bbT}{ \mathbb{T}}

\newcommand{\Dhuh}{\mathbb{D}_h(\vuh)}
\newcommand{\vthB}{\vt_{B,h}}
\newcommand{\vtB}{\vt_{B}}
\newcommand{\hvt}{\Theta}
\newcommand{\hvth}{\Theta_h}
\newcommand{\difuh}{\bS_h:\Gradh \vuh }
\newcommand{\vvh}{\vc{v}_h}
\newcommand{\vwh}{\vc{w}_h}
\newcommand{\vw}{\vc{w}}

\newcommand{\PiQ}{\Pi_Q}
\newcommand{\PiL}{\Pi^L_h}
\newcommand{\PiW}{\Pi_W}
\newcommand{\PiWi}{\Pi_{W}^{(i)} }
\newcommand{\PiF}{\Pi_{\faces}}
\newcommand{\PiFi}{\Pi_{\faces}^{(i)}}
\newcommand{\Up}{{\rm Up}}
\newcommand{\Fup}{F_h^{\alpha}}
\newcommand{\bfphi}{\boldsymbol{\Phi}}
\newcommand{\muh}{h^\alpha}

\allowdisplaybreaks

\newcommand {\Wh} {W_h}
\newcommand {\Whi} {\Wh^{(i)}}

\newcommand{\sumSi}{ \!\! \sum_{\sigma \in \facesi} \!\!}
\newcommand{\facesi}{\faces _i}
\newcommand{\ve}{\vc{e}}

\newcommand{\sumSK}{ \!\! \sum_{\sigma \in \faces(K)} \!\!}
\newcommand{\sumK}{ \!\! \sum_{K \in \mesh} \!\!}
\newcommand{\vWh}{ {\bf W}_h}
\newcommand{\facei}{ {\faces}_i}

\begin{document}

%%%%%%%%%%%%%%%%%%%%%%%%%%%%%%%%

\title{\bf Convergence of a finite volume method to weak solutions for the compressible Navier--Stokes--Fourier system}

\author{Eduard Feireisl
	\thanks{
		The work of E.F.\ was partially supported by the
		Czech Sciences Foundation (GA\v CR), Grant Agreement
		24--11034S. The Institute of Mathematics of the Academy of Sciences of
		the Czech Republic is supported by RVO:67985840.
		E.F.\ is a member of the Ne\v cas Center for Mathematical Modelling.} 
	\and M\' aria Luk\'a\v{c}ov\'a-Medvi\softd ov\'a\thanks{The work of  M.L.-M. was supported by the Gutenberg Research College and by
		the Deutsche Forschungsgemeinschaft (DFG, German Research Foundation) -- project number 233630050 -- TRR 146 and
		project number 525853336 -- SPP 2410 ``Hyperbolic Balance Laws: Complexity, Scales and Randomness".
		She is also grateful to  the  Mainz Institute of Multiscale Modelling  for supporting her research.}
	\and Bangwei She\thanks{ The work of B.S. was supported by National Natural Science Foundation of China under grant No.\ 12571433.} 
	\and Yuhuan Yuan\thanks{ The work of Y.Y. was supported by National Natural Science Foundation of China under grant No.\ 12401527 and 12571433, and Natural Science Foundation of Jiangsu Province under grant No. BK20241364.}
}

\date{}

\maketitle
{\small
\vspace{-0.75cm}
\centerline{$^*$ Institute of Mathematics of the Academy of Sciences of the Czech Republic}
\centerline{\v Zitn\' a 25, CZ-115 67 Praha 1, Czech Republic}
\centerline{feireisl@math.cas.cz}

\smallskip
\centerline{$^\dag$Institute of Mathematics, Johannes Gutenberg-University Mainz}
\centerline{Staudingerweg 9, 55 128 Mainz, Germany}
\centerline{RMU Co-Affiliate Technical University Darmstadt, Germany}
\centerline{lukacova@uni-mainz.de}

\smallskip
\centerline{$^\ddag$Academy for Multidisciplinary studies, Capital Normal University}
\centerline{ West 3rd Ring North Road 105, 100048 Beijing, P. R. China}
\centerline{bangweishe@cnu.edu.cn}

\smallskip
\centerline{$^\S$School of Mathematics, Nanjing University of Aeronautics and Astronautics}
\centerline{Jiangjun Avenue No. 29, 211106 Nanjing, P. R. China}
\centerline{yuhuanyuan@nuaa.edu.cn}
}

%\smallskip
\vspace{-0.1cm}
\begin{abstract}%\vspace{-0.1cm}
We prove strong convergence of an upwind-type finite volume method to a weak solution of the Navier--Stokes--Fourier system with the Dirichlet boundary conditions.  The limit solution  satisfies a weak form of the mass and momentum equations, together with a weak form of the entropy and ballistic energy inequalities, and complies with the weak-strong uniqueness principle. The finite volume method uses piecewise-constant spatial approximations. The convergence proof is based on a combination of delicate consistency estimates with a careful analysis of the oscillations of numerical densities via renormalisation of the continuity equation.	
\end{abstract}

%\bigskip

{\small
	
	\noindent
	{\bf 2020 Mathematics Subject Classification: }{35Q30,  65M08, 65M12
		(primary); 
		(secondary) 35M13, 35Q35, 74S10 }
	
	\medbreak
	\noindent {\bf Keywords: } finite volume method, convergence analysis, consistency errors, ballistic energy inequality, stability, weak-strong uniqueness, Dirichlet boundary conditions, Div-Curl Lemma}

\tableofcontents

\section{Introduction}
\label{i}

The motion of a compressible viscous and heat conducting fluid  in the time-space cylinder $(0,T) \times \Omega$ is described by the \emph{Navier--Stokes--Fourier} system: 
\begin{align} 
\partial_t \vr + \Div (\vr \vu) &= 0, \label{i1} \\ 
\partial_t(\vr \vu) + \Div (\vr \vu \otimes \vu) + \Grad p &= 
\Div \mathbb{S} + \vr \vc{g}, \label{i2} \\
\partial_t (\vr e) + \Div (\vr e \vu)	
+ \Div \vc{q} &= \mathbb{S} : \Grad \vu - p \Div \vu.
\label{i3}
\end{align}	
Here $\vr = \vr(t,x)$ is the mass density,  $\vu = \vu(t,x)$ is the (macroscopic) velocity,  $e= e(t,x)$ is the internal energy, $\vc{g}$ is the gravitational force, $\mathbb{S}$ is the \emph{viscous stress} given by Newton's law
\begin{equation} \label{i4}
\mathbb{S} = \mu \left( \Grad \vu + \Grad^T \vu - \frac{2}{d} \Div \vu 
\mathbb{I} \right) + \eta \Div \vu \mathbb{I},\ \mu > 0,\ \eta \geq 0,
\end{equation}
and $\vc{q}$ is the \emph{heat flux} given by Fourier's law 
\begin{equation} \label{i5} 
\vc{q} = - \kappa \Grad \vt,\ \kappa > 0, 
\end{equation}
where $\vt = \vt(t,x)$ is the (absolute) temperature.		
The pressure $p = p(\vr, \vt)$ are related to the temperature through the standard \emph{Boyle--Mariotte equation of state}  
\begin{equation} \label{i7}
p(\vr, \vt) = \vr \vt,\ e = c_v \vt,\ c_v > 0, 
\end{equation} 
with the associated entropy 
\begin{equation} \label{i7a}
s(\vr, \vt) = c_v \log (\vt) - \log(\vr).
\end{equation}

%Motivated by the Rayleigh--B\' enard convection problem, 
\noindent The fluid domain is a (space periodic) slab 
\[
\Omega = {\mathbb{T}}^{d-1} \times [-H,H],\ 
\mathbb{T}^{d-1} = \left( [-L,L]|_{\{ -L, L\} } \right)^{d-1},\ d = 2,3, 
 %\cgrey {\mathcal{T}}^d \times [0,H],\ 
%\mathcal{T}^d = \left( [-1,1]|_{\{ -1, 1\} } \right)^d,\ d = 1,2, 
\]
specifically, the periodicity is imposed in the horizontal direction. 
The velocity field satisfies the no--slip boundary conditions 
\begin{equation} \label{i8}
\vu|_{\partial \Omega} = 0, 
\end{equation}	
while the temperature $\vt$ is prescribed as
\begin{equation} \label{i9}
\vt|_{\partial \Omega} = \vtB, 
\end{equation}
on the horizontal boundary $ x_d = -H, H$.
The system is formally closed by fixing the initial state
\begin{equation}\label{i10}
\vr(0,\cdot)=\vr_0,\quad \vu(0,\cdot)=\vu_0, \quad \vt(0,\cdot)=\vt_0
 \quad \text{in }\Omega, \quad \vr_0>0, \ \vt_0>0. 
\end{equation}

This is a typical configuration arising in real-world applications, such as the Rayleigh--B\'enard convection problem.
The objective of the present paper is 
to establish \emph{strong convergence} of a stable and consistent finite volume method to a \emph{weak solution} of \eqref{i1}-\eqref{i10}. We consider the first-order accurate upwind-type finite volume method proposed in our previous work \cite{FeLMShYu:2024}. There are several reasons why the class of weak solutions may be relevant for 
physically grounded problems: 
\begin{enumerate}
	
\item Singularities, for instance, shock-type discontinuities of the density/pressure, may be  imposed through the initial data. 
It is worth noting that these will not be smoothed by the motion but rather persist at any time, cf.\ Hoff \cite{HOF1,HOF7}, Hoff and Santos \cite{HofSan}. This is relevant, for example, to piston-type problems or to the study of fluid flows in the high-Reynolds number regime (turbulence) \cite{davidson, FeLMShYu:2025II}.

\item The underlying spatial domain may not be smooth enough, but merely Lipschitz. Accordingly, the problem does not admit classical solutions, no matter how smooth or small the data are. Polygonal and generally non--convex domains arise naturally in numerical approximations.

\item The solutions may simply develop ``blow--up'' singularities in a finite time, cf.\ the recent examples by Merle et al.\ \cite{MeRaRoSz,MeRaRoSzbis}, Buckmaster et al.~\cite{BUCLGS}, while the weak solutions exist globally.  
	
\end{enumerate}

Convergence to weak solutions for models of compressible flows is rather delicate because of the lack of suitable stability bounds on the (approximate) densities.  
At the continuous level, this problem can be handled by the approach proposed by Lions \cite{Lions} based on renormalization of the equation of continuity and a careful analysis of density oscillations. This method has been successfully carried over to the numerical framework in a seminal work of Karper 
\cite{Karper} by means of an ingenious but computationally a bit awkward combination of a finite volume and finite element method, see also Eymard  et al. \cite{GL}  for  related results. We note that in \cite{Karper} numerical velocities are approximated by linear finite elements, whereas in our method we use piecewise constant approximations for all variables: density, velocity, and temperature. This makes the application of the discrete version of the Lions approach delicate.
Karper's method has been developed and successfully applied to a larger class of problems, including the Navier--Stokes--Fourier system, cf.~\cite{FeKaNo}. The apparent drawback of \cite{FeKaNo} is the weak formulation for the continuous system that does not admit any relative energy functional; whence nowadays known results on the 
weak--strong uniqueness as well as conditional regularity are not available for the class of weak solutions in \cite{FeKaNo}. 

\subsubsection*{Main novelties and organisation of this paper}
Our aim is to remove the above-mentioned difficulties by proving strong convergence to a weak solution in the framework introduced recently in the monograph \cite{FeiNovOpen}.  This new approach 
allows a straightforward implementation of physically relevant inhomogeneous Dirichlet boundary conditions.
The main novelties of this paper are:
\begin{itemize}
	
\item We examine the first-order accurate upwind-type finite volume scheme applicable to the Navier--Stokes--Fourier system \eqref{i1}--\eqref{i7}, with physically relevant Dirichlet boundary conditions \eqref{i8}, \eqref{i9}, proposed in \cite{FeLMShYu:2024}.  We show that the scheme is stable and consistent under the assumption of uniform boundedness of the numerical approximations.  Careful analysis of the consistency errors allows even low-regularity test functions, as shown in Section~\ref{DEC}.

\item
Since the numerical approximations are spatially piecewise constant, the differential operators have to be discretized appropriately to preserve the discrete integration-by-parts formulae (compatibility conditions, see~Lemma~\ref{lem_CI}) as well as the discrete structures required by the so-called Lions identity, see Lemma~\ref{lem} and Appendix \ref{sec-diffop}. 
In particular, unlike in the mixed finite element-finite volume method proposed in \cite{Karper}, the broken $H^1$-norm of the velocity is no longer bounded, cf. Lemma \ref{lm_ub}, \eqref{ap4}, which results in many new technical difficulties arising in the proof of the strong convergence of the density.

\item As already pointed out, we prove strong  convergence to a weak solution in the framework introduced in the monograph \cite{FeiNovOpen}, see Theorem~\ref{thm_main}. Thus, the limit 
system complies with the weak--strong uniqueness property and enjoys conditional regularity. In particular, we recover immediately unconditional strong convergence as soon as the initial data are smooth enough. 

\end{itemize}

The paper is organized as follows. In Section \ref{n}, we introduce the weak formulation of the NSF system as well as the numerical method.  
In Section \ref{sec:convergence}, we state our main result along with the first steps of its proof. Section \ref{app:con-d} is the heart of the paper. 
We establish the so--called Lions identity and show strong (pointwise) convergence of the densities. Note that in the present purely finite volume context, the proof is rather different from \cite{Karper}.

\section{Weak formulation and the numerical method}
\label{n}

We start by introducing the concept of a weak solution to the 
Navier--Stokes--Fourier system, see \cite[Chapter 12, Definition 7]{FeiNovOpen}.
\begin{Definition}\label{Dw1}
We say that $(\vr,\vu,\vt)$ is a \emph{weak solution} to the Navier--Stokes--Fourier system \eqref{i1}--\eqref{i7}
on $(0,T)\times \Omega$ with the boundary conditions \eqref{i8}, \eqref{i9} and the initial data \eqref{i10} if the following holds:
\begin{itemize}
\item The solution belongs to the following {\bf regularity class}:
\begin{align}
\vr &\in L^\infty(0,T; L^p(\Omega)) \ \mbox{for some}\ p > 1, \br
\vu &\in L^2(0,T; W^{1,2}_0 (\Omega; \R^d)), \quad
(\vt - \vtB) \in L^2(0,T; W^{1,2}_0 (\Omega)).
\label{w6}
\end{align}

\item The {\bf equation of continuity} \eqref{i1} is satisfied in the sense of distributions:
\begin{equation}\label{w3}
\intTOB{ \vr \partial_t \varphi + \vr \vu \cdot \Grad \varphi } + \intO{ \vr_0 \varphi(0,\cdot)} =0
\end{equation}
for any $\varphi \in C^\infty_c([0,T)\times \Ov{\Omega})$.

Moreover, its {\bf renormalized formulation} holds:
\begin{equation}\label{w4}
\intTOB{ b(\vr) \partial_t \varphi + b(\vr) \vu \cdot \Grad \varphi + \big( b(\vr) - b'(\vr) \vr \big)\Div \vu \, \varphi }
+ \intO{ b(\vr_0)\varphi(0,\cdot)} = 0
\end{equation}
for any $\varphi \in C^\infty_c([0,T)\times \Ov{\Omega})$ and any $b \in C^1(\R)$ with $b' \in C_c(\R)$.

\item The {\bf momentum equation} \eqref{i2} is satisfied in the sense of distributions:
\begin{equation}\label{w5}
\begin{aligned}
&\intTOB{ \vr \vu \cdot \partial_t \bfphi + \vr \vu \otimes \vu : \Grad \bfphi + p \Div \bfphi }
+ \intO{ \vr_0 \vu_0 \cdot \bfphi(0,\cdot)}
\\
&\qquad = \intTOB{ \mathbb{S} : \Grad \bfphi - \vr \vc{g} \cdot \bfphi } 
\end{aligned}
\end{equation}
for any $\bfphi \in C^\infty_c([0,T)\times \Omega; \R^d)$.

\item The internal energy equation \eqref{i3} is replaced by the {\bf entropy inequality}:
\begin{equation}\label{w7}
\begin{aligned}
-\intTOB{ \vr s \partial_t \varphi + \vr s \vu \cdot \Grad \varphi + \frac{\vc{q}}{\vt} \cdot \Grad \varphi }
&\geq \intTOB{ \frac{\varphi}{\vt} \left[ \mathbb{S} : \Ds \vu -  \frac{\vc{q} \cdot \Grad \vt }{\vt} \right] }
\\&+ \intO{ \vr_0 s(\vr_0,\vt_0)\varphi(0,\cdot)}
\end{aligned}
\end{equation}
for any $\varphi \in C^\infty_c([0,T)\times \Omega)$, $\varphi \geq 0$.

\item The {\bf ballistic energy inequality} holds in the differential form:
For any $\psi \in C^\infty_c[0,T)$, $\psi \geq 0$, and any $\hvt \in C^1([0,T]\times \Ov{\Omega})$,
$\inf_{[0,T]\times\Omega} \hvt > 0$, $\hvt|_{\partial \Omega} = \vtB$, we have
\begin{equation}\label{w8}
\begin{aligned}
&-\int_0^T \partial_t \psi(t)\intO{ \left[ \frac{1}{2} \vr |\vu|^2 + \vr e - \hvt \vr s \right] } \dt
+ \int_0^T \psi(t)\intO{ \frac{\hvt}{\vt} \left[ \mathbb{S}: \Ds \vu - \frac{\vc{q} \cdot \Grad \vt }{\vt} \right] } \dt
\\
&\leq \psi(0)\intO{ \left[ \frac{1}{2} \vr_0 |\vu_0|^2 + \vr_0 e(\vr_0,\vt_0) - \hvt(0,\cdot) \vr_0 s(\vr_0,\vt_0) \right] }
\\
&\qquad + \int_0^T \psi(t)\intO{ \left[ \vr \vu \cdot \vc{g} - \vr s \,\partial_t \hvt - \vr s \vu \cdot \Grad \hvt - \frac{\vc{q}}{\vt} \cdot \Grad \hvt \right] } \dt .
\end{aligned}
\end{equation}
\end{itemize}
\end{Definition}

We refer the reader to the monograph \cite{FeiNovOpen} for the basic properties of the weak solutions defined above. In particular, 
they exist globally in time for any finite-energy initial data and comply with the weak--strong uniqueness principle. The existence theory 
developed in \cite{FeiNovOpen} is based on more complex but still physically grounded constitutive equations replacing \eqref{i7}.
We are ready to introduce our numerical method.

\subsection{Notation}

Let the domain $\Omega$ be divided into uniform cubes (or squares in 2D) of size $h\in(0,1)$, named as $\mesh$. Let $Q_h$ be the space consisting of piecewise constant functions on the discrete mesh $\mesh$. 
We denote by $\PiQ : L^1(\Omega)\to Q_h$ the cell-average projection,
\begin{equation}\label{proj-avg}
\PiQ v|_K := \frac1{|K|}\int_K v\,\dx \qquad \text{for each cell } K\in\mesh.
\end{equation}

The set of all faces of $\mesh$ is denoted by $\faces$, and  $\facesext = \faces \cap \partial Q$ (resp.\ $\facesint = \faces \setminus \facesext$) stands for the set of all exterior (resp.\ interior) faces.
%{\cgrey Similarly to the above, the edge average operator is defined as 
%\begin{equation} \label{proj-edge}
%	\Pi_W v|_{\sigma} := \frac{1}{|\sigma|} \int_{\sigma} v \ \D s, \ \sigma 
%	\in \mathcal{E}, \ v \in W^{1,1}(\Omega).
%\end{equation}}
Moreover, we denote %by $\faces$ the set of all faces of $\grid$
by $\facei$, $i=1,\dots, d$, the set of all faces that are orthogonal to the canonical basis vector ${\bf e}_i$. 
%Let $\faces$ be the set of all faces of $\mesh$ and $\facei$ be the set of all faces that are orthogonal to the canonical basis vector ${\bf e}_i$. 
Moreover, we define the $i^{\rm th}$ dual grid ${\cal D}_i$ as set of all cubes of the same size $h$ with mass centres sitting at the same position as $\sigma \in  \facei$. Let $W_h^{(i)}$ be the space of piecewise constant functions on ${\cal D}_i$  and $\vWh = \{W_h^{(1)},\cdots,W_h^{(d)}\}$. 
For each $i\in\{1,\dots,d\}$ we also introduce a edge-based average operator $\PiFi$ defined for each dual cell $D\in{\cal D}_i$ by
\begin{equation}\label{projE}
%&\PiWi : L^1(\Omega)\to W_h^{(i)}, \quad (\PiWi v)|_{D} := \frac1{|D|}\int_{D} v\,\dx, \quad v \in L^{1}(\Omega), \\
\PiFi : L^1(\Omega)\to W_h^{(i)}, \quad  (\PiFi v)|_{D} := \frac{1}{|\sigma|} \int_{\sigma} v \ \ds,  \quad v \in W^{1,1}(\Omega).
%\\&\PiW \vv = \left(\PiW^{(1)}v_1, \cdots, \PiW^{(d)}v_d \right),\quad \PiF \vv = \left(\PiF^{(1)}v_1, \cdots, \PiF^{(d)}v_d \right), \quad \vv = (v_1,\dots, v_d).
\end{equation}
%{\cgrey and $\vWh = \{W_h^{(1)},\cdots,W_h^{(d)}\}$. }

For a generic function $v \in Q_h$ we denote on $\sigma \in \faces$ that 
\begin{equation*}
\begin{aligned}
& v^{\rm in} = \lim_{\delta \rightarrow 0^+} v(x -\delta \vn ), \ \
v^{\rm out} = \lim_{\delta \rightarrow 0^+} v(x +\delta \vn ), \  \
\jump{v}= v^{\rm out} - v^{\rm in}, \ \
 \avs{v}= (v^{\rm out} + v^{\rm in} )/2.
\end{aligned}
\end{equation*}
Given a velocity field $\vuh$ we denote the upwind flux at $\sigma \in \faces$  for $r_h \in Q_h$ as 
\begin{align*}
\Up [r_h, \vuh]|_\sigma   =r_h^{\rm up} \avs{\vuh }_\sigma \cdot \vn_\sigma , \quad 
r_h^{\rm up} =
\begin{cases}
r_h^{\rm in} & \mbox{if } \ \avs{\vuh }_\sigma \cdot \vn_\sigma \geq 0, \\
r_h^{\rm out} & \mbox{if } \ \avs{\vuh }_\sigma \cdot \vn_\sigma < 0.
\end{cases}
\end{align*}
Moreover, we introduce the following discrete difference operators for $v\in Q_h, \vv\in Q_h^d$ and $\vw \in \vWh$
\begin{align*}
&\pdedgei v(x) = \sum_{D_\sigma \in {\cal D}_i} \mathds{1}_{D_\sigma} (x) \frac{\jump{r_h}|_{\sigma}}{h} ,\  \Gradd v(x)  =  \left(\pdedge^{(1)} v, \dots, \pdedge^{(d)} v \right);
 %\Gradd v(x)  =  \sumS  \mathds{1}_{D_\sigma}(x) \vn \frac{\jump{v}}{h}  , 
\
\Gradh v(x)  = \sumK  \mathds{1}_K{(x)}\sum_{\sigma\in \facesK} \vn  \otimes \frac{\avs{v}}{h}; 
\\& 
\Divmesh \vw (x)= \sumK \mathds{1}_K{(x)}\sum_{\sigma\in \facesK} \vn \cdot \frac{\vw}{h};
\quad  
\Divh \vv(x) = \sumK  \mathds{1}_K{(x)} \sum_{\sigma\in \facesK} \vn \cdot \frac{\avs{\vv}}{h} = \mbox{tr}(\bD_h \vv);
\\&
\bD_h \vv = (\Gradh \vv + \Gradh^T \vv)/2; 
\quad
 \Laph v (x)= \sumK\mathds{1}_K{(x)}  \sum_{\sigma\in \facesK} \frac{\jump{v}}{h^2} = \Divmesh \Gradd v(x).
\end{align*}
%We suppose $\TS \approx h$ and denote  $t^k= k\TS$ for $k=0,\ldots,N_T(=T/\TS)$. Then we denote a discrete function $v_h$ at time $t^k= k\TS$ by $v_h^k$ and write $v_h \in L_{\TS}(0,T;Q_h)$ if $v_h^k \in Q_h$ for all $k= 0,\dots, N_T$ with  \[ v_h(t,\cdot) =v_h^0 \mbox{ for } t \leq 0,\ v_h(t,\cdot)=v_h^k \mbox{ for } t\in [k\TS,(k+1)\TS),\ k= 0,\ldots,N_T.
%\]
Further, we choose a time step $ \Delta t \approx h, \TS \in (0,1)$ and define the discrete time derivative by the backward Euler method
\[
 D_t v_h(t) = \frac{v_h (t) - v_h^{\triangleleft}(t)}{\TS}  \quad \mbox{ for } t\in (0,T) \quad \mbox{ with } \quad   v_h^{\triangleleft}(t) =  v_h(t - \Delta t) .
\]

\subsection{Numerical scheme}

Suppose the boundary temperature $\vtB$ is a restriction of a positive function $\vtB \in W^{2,\infty}((0,T) \times \Omega)$.
We recall the fully discrete finite volume method introduced in \cite{FeLMShYu:2024}. 
\paragraph{Discrete initial data.}
Given the initial data  $(\vr_0,\vu_0,\vt_0)$ as in \eqref{i10}, we initialize the scheme by the cell-average projection $\PiQ$,
\begin{equation}\label{init-def}
\vrh^0 = \PiQ \vr_0,\qquad
\vuh^0 = \PiQ \vu_0,\qquad
\vth^0 = \PiQ \vt_0.
\end{equation}

%Equivalently, for each cell $K\in\mesh$, we set
%\[
%\vuh^0|_K =
%\begin{cases}
%\dfrac{\PiQ(\vr_0\vu_0)|_K}{\PiQ\vr_0|_K}, & \PiQ\vr_0|_K>0,\\[1ex]
%0, & \PiQ\vr_0|_K=0.
%\end{cases}
%\]
%{\cmag Y: I prefer to delete this part. In the numerical simulations, discrete initial data $\vU_h^0$ are not taken as above. Since our method is first-order, I initialized them more simply with $\vU_0^h = \vU_0 + \mathcal{O}(h)$. \\
%ML: I would keep it as it was, it is not relevant if numerical simulations are slightly different. But we should be specific here! }

\begin{Definition}[{\bf Numerical method}]
Starting with the initial data $(\vrh^0,\vuh^0, \vth^0) \in Q_h^{d+2}$, we say that the sequence
$\{ \vrh^k,\vuh^k, \vth^k \}_{k=1}^{N_T}$ is a finite volume approximation of the Navier--Stokes-Fourier problem \eqref{i1}--\eqref{i9} if it solves the following system of algebraic equations: 
%The FV scheme \eqref{VFV} shall be rewritten in the following strong form 
%\begin{subequations}\label{VFV-1}
%\begin{align}\label{VFV_D-1}
%&D_t \vrh^{k}   + \Divmesh ( \Fup (\vrh^{k} ,\vuh^{k} ) \cdot \vn) = 0,   \\
% \label{VFV_M-1}
%&D_t  (\vrh^{k}  \vuh^{k} )  + \Divmesh ({\bf F}_h^{\alpha}  (\vrh^k  \vuh^k ,\vuh^k ) \cdot \vn )= \Divh (\bS_h^k -p_h^k \bI ) + \vrh^{k} \vc{g} ,   \\\label{VFV_IE-1}
%&c_v D_t (\vrh^k  \vth^k )  + c_v \Divmesh ({\bf F}_h^{\alpha}  (\vrh^k \vth^k  ,\vuh^k ) \cdot \vn) - \kappa \Laph \vth^k
%= (\bS_h^k -p_h^k \bI ) : \Gradh \vuh^k.
%\end{align}
%\end{subequations}
%Here $\Fup (r_h,\vuh)$ is taken as the diffusive upwind numerical flux defined by
%\begin{equation*}%\label{num_flux}
%\Fup (r_h,\vuh)
%={\Up}[r_h, \vuh] - \muh \jump{ r_h }, \quad \alpha >-1,
%%\quad \mbox{ with } \  \ \Up [r_h, \vuh] = r_h^{\rm up} \avs{\vuh} \cdot \vn,
%\end{equation*}
%and 
%$$\bS_h = 2\mu \bD_h \vuh + \lambda \Divh\vuh \bI, \; \Dhuh = (\Gradh \vu_h+\Gradh^T \vu_h)/2, \; \lambda = \eta -  \frac2{d} \mu,$$
%coupled with the following boundary conditions
%\begin{align}
% \Fup (r_h , \vuh )|_{\sigma} = 0, \quad \avs{\vth^k}_{\sigma } = \vthB, \quad \avs{\vuh^k}_{\sigma } = 0 \ \mbox{ and }\ \jump{\bS_h^k -p_h^k \bI}_{\sigma} \cdot \vn = 0, \quad \sigma \in \facesext.
%\end{align}
%\end{Definition}
%\begin{Definition}[{{\bf Weak form}}]
\label{VFV-weak}
%Let $\vU_h^0\equiv(\vrh^0,\vuh^0, \vth^0) \in Q_h^{d+2}$ and $\vthB  \in W_h^{(d)}$. %:=\PiQ (\vr_0, \vu_0, \vt_0). = \Piw \vtB
%We say that the sequence
%$\{\vU_h^k\equiv(\vrh^k,\vuh^k, \vth^k)\}_{k=1}^{N_T}$ is a finite volume approximation of the Navier--Stokes-Fourier problem \eqref{i1}--\eqref{i9} if it solves the following system of algebraic equations: 
%The FV scheme \eqref{VFV-1} shall be rewritten in the following weak form
\begin{subequations}\label{VFV}
\begin{align}\label{VFV_D}
&\intO{ D_t \vrh^k  \phi_h } - \intfacesint{  \Fup (\vrh^{k} ,\vuh^{k} )
\jump{\phi_h}   }  = 0 \quad \mbox{ for all }\ \phi_h \in Q_h, \\
 \label{VFV_M}
&\intO{ D_t  (\vrh^k  \vuh^k ) \cdot \bfphi_h } - \intfacesint{ {\bf F}_h^{\alpha}  (\vrh^{k}  \vuh^{k} ,\vuh^{k} ) \cdot \jump{\bfphi_h}   } + \intO{ (\bS_h^{k} -p_h^{k} \bI ) : \bD_h \bfphi_h }
\br
&\hspace{1. cm}
= \intO{\vrh^{k} \vc{g} \cdot  \bfphi_h}\ 
\mbox{ for all } \bfphi_h \in Q_h^d, \ \mbox{where we set}\ \avs{\bfphi_h}|_{\sigma} = 0 \ \mbox{for any}\ \sigma \in \mathcal{E}_{\rm ext},
\\ \label{VFV_E}
&c_v\intO{ D_t (\vrh^k  \vth^k ) \phi_h } - c_v\intfacesint{  \Fup (\vrh^{k} \vth^{k} ,\vuh^{k} )\jump{\phi_h} }+\intfacesint{  \frac{\kappa}{ h } \jump{\vth^{k}}  \jump{ \phi_h}  }
\br
&\hspace{1.3cm}
+ 2\intfacesext{  \frac{\kappa}{ h } \left( (\vt_h^{k})^{\rm in} - \vtB \right)  \phi_h^{\rm in}  }
= \intO{ (\bS_h^{k} -p_h^{k} \bI ) : \Gradh \vuh^{k} \phi_h} \quad\ \mbox{for all}\ \phi_h \in Q_h,
\end{align}
\end{subequations}
where 
$\Fup (r_h,\vuh)$ is the diffusive upwind flux taken as
\begin{equation*}%\label{num_flux}
\Fup (r_h,\vuh)
={\Up}[r_h, \vuh] - \muh \jump{ r_h }, \quad \alpha >-1,
%\quad \mbox{ with } \  \ \Up [r_h, \vuh] = r_h^{\rm up} \avs{\vuh} \cdot \vn,
\end{equation*}
and $\bS_h = 2\mu \bD_h \vuh + \lambda \mbox{tr} ( \Dhuh ) \bI, \; \Dhuh = (\Gradh \vu_h+\Gradh^T \vu_h)/2, \; \lambda = \eta -  \frac2{d} \mu$, 
together with the  boundary conditions
\begin{equation} \label{VFV_bound} 
\avs{ \vu^k_h }_{\sigma} = 0,\ \ \avs{\vth^k}_{\sigma} =  \vt_{B,h}^k|_\sigma, \ \mbox{ for all }\ \sigma \in \facesext, \quad  \vt_{B,h}^k:= \PiF^{(d)} \vtB(t_k,\cdot).
\end{equation}
\end{Definition}

\medskip
\noindent \textbf{Time interpolation.} 
Given a time step $\Delta t \approx h$ and a discrete sequence $\{\phi_h^k\}_{k=0}^{N_T}$, we define two approximations in time $\phi_h(t)$ and $\tilde{\phi}_h(t)$
by applying piecewise constant and piecewise linear interpolations in time, respectively. Specifically, for $t_k = k\TS$ we set
\begin{align*}
&\phi_h(t, \cdot) = \phi_h^0 \mbox{ for } t \leq t_0, \  \phi_h(t, \cdot) = \phi_h^{N_T} \mbox{ for } t=t_{N_T}, \\
&{\phi}_h(t, \cdot) =\phi_h^{k-1} \ \mbox{ for } \ t = [t_{k-1}, t_{k});  \ \ k=1,\dots, N_T ;
\end{align*}
and 
\begin{align*}
&\tilde{\phi}_h(t, \cdot) =\phi_h^0 \ \mbox{ for } \ t \leq t_0, \\
&\tilde{\phi}_h(t,\cdot)=\phi_h^{k-1} + \frac{\phi_h^{k}- \phi_h^{k-1}}{\TS}(t-t_{k-1}) \ \mbox{ for } \ t\in [t_{k-1},t_k], \ \ k=1,\dots, N_T.
\end{align*}
The latter implies $\pd_t \tilde{\phi}_h(t) = D_t \phi_h^k$ if $ t \in (t_{k-1}, t_k), \; k = 1, \dots, N_T$.

\begin{Remark} \label{rRr}
	
The former condition in \eqref{VFV_bound} is a convention necessary to define the differential operator 
$\Grad \vu_h$ on $\mathcal{E}_{\rm ext}$. Strictly speaking, the latter is not explicitly used in the scheme but needed in the 
analysis for the definition of the discrete operator $\nabla_{\mathcal{E}}$.	

\end{Remark}

\subsection{Uniform bounds - stability}
To begin, we recall the uniform bounds established in our previous paper
\cite[Lemma A.4, Remarks A.5 and A.6]{FeLMShYu:2024}.
\begin{Lemma}[{\bf Stability estimates} ]\label{lm_ub}
Let $\{ \vrh ,\vuh, \vth \}_{h \searrow 0}$ be a family of numerical solutions obtained by the FV method \eqref{VFV}, \eqref{VFV_bound} with $h \approx \TS$, and $\alpha \in (-1,1)$.
%Assume that there exist $\Un{\vr}, \Ov{\vr}, \Ov{u}$ and $\Un{\vt}, \Ov{\vt}, \Un{\vt} \leq \vtB \leq \Ov{\vt}$ such that {\bf (B)} holds, see \eqref{HP}.
Suppose the numerical solutions are uniformly bounded, specifically,  
\begin{equation}\label{HP}
   0< \Un{\vr} \leq \vrh \leq \Ov{\vr}, \
0< \Un{\vt} \leq \vth \leq \Ov{\vt}, \
\abs{\vuh} \leq \Ov{u} \   \mbox{ uniformly for } h \to 0.
\end{equation}

Then we have
\begin{subequations}\label{ap}
\begin{align}\label{ap1}
&\norm{\Gradd \vth}_{L^2(\timezone \times \Omega; \R^{d})} + \norm{ \Gradh \vuh}_{L^2(\timezone \times \Omega; \R^{d\times d})} \leq C ,
\end{align}
\begin{align}\label{ap2}
&(\TS)^{1/2} \norm{D_t (\vrh ,\vuh, \vth)  }_{L^2(\timezone \times \Omega; \R^{d+2})}  \leq C ,
\end{align}
\begin{align} \label{ap3}
\int_{0}^{T}\intfacesint{ \left( h^\alpha + \abs{ \avs{\vuh}\cdot \vc{n} } \right) \, \abs{\jump{ (\vrh ,\vuh, \vth) }}^2  }\dt \leq C, \\ 
\label{ap5} 
\int_{0}^{T} \intfacesext{\abs{\vth^{\rm in} - \vt_{B,h}}^2} \leq Ch, 
\end{align}
which implies
\begin{equation}\label{ap4}
\norm{\Gradd \vuh}_{L^2(\timezone \times \Omega; \R^{d\times d})} \leq C h^{-(1+\alpha)/2}.
\end{equation}
\end{subequations}
The constant $C$ depends on $T$, $\|\vtB\|_{W^{1,\infty}((0,T) \times\Omega)}$ and $\Un{\vr}, \Ov{\vr}, \Un{\vt}, \Ov{\vt}, \Ov{u}$, but it is independent of the discretization parameters $(h, \TS)$.
\end{Lemma}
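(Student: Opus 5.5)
The estimates come from the discrete \emph{ballistic energy balance}. The continuous inequality \eqref{w8} suggests the test functions. I would take $\phi_h = 1$ in \eqref{VFV_D}, take $\bfphi_h = \vuh^k$ in \eqref{VFV_M}, and test \eqref{VFV_E} with $\phi_h=1$. These three identities give the total energy balance. Separately, I would derive a discrete entropy equation by testing \eqref{VFV_E} with $\phi_h = 1/\vth^k$ and \eqref{VFV_D} with $\phi_h = \partial_\vr(\vr s)(\vrh^k,\vth^k) = c_v\log\vth^k - \log \vrh^k - 1$. I would then combine the two with the weight $-\Theta_h$, where $\Theta_h = \Pi_Q \Theta$ and $\Theta$ is a smooth positive extension of $\vtB$ (e.g.\ $\vtB$ itself, which is $W^{2,\infty}$).

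The convexity of the discrete energy $E(\vr,\vm,\vr e) - \Theta \vr s$ in the conservative variables, together with the backward Euler discretisation, yields nonnegative numerical dissipation terms of the type $\TS\, |D_t(\vrh,\vuh,\vth)|^2$. This gives \eqref{ap2}. Upwinding and the artificial diffusion $h^\alpha\jump{\cdot}$ produce, via the standard identity for the upwind flux tested by $\jump{b'(r_h)}$, the face terms $(h^\alpha + |\avs{\vuh}\cdot\vn|)\,|\jump{\cdot}|^2$. This gives \eqref{ap3}. Each of these dissipative terms is bounded below by a multiple of the squared quantities, with constants depending on $\Un{\vr},\Ov{\vr},\Un{\vt},\Ov{\vt},\Ov{u}$, because \eqref{HP} makes the Hessian of the energy uniformly positive definite on the range of the solution.

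The physical dissipation appears in the form $\frac{\Theta_h}{\vth}\big(\bS_h:\Gradh\vuh + \frac{\kappa}{h}\frac{\jump{\vth}^2}{\vth^{\rm in}\vth^{\rm out}}\big)$ summed over the faces. This gives \eqref{ap1} by Korn-type control of $\Gradh\vuh$ through $\bD_h\vuh$; on the periodic slab with zero face averages one can use a discrete Fourier/summation-by-parts argument. The boundary face term $2\frac{\kappa}{h}(\vth^{\rm in}-\vtB)\,(\Theta_h/\vth^{\rm in}-1)$ has a sign up to errors since $\Theta\approx\vtB$ there. Its positive part, $\frac{\kappa}{h}|\vth^{\rm in}-\vt_{B,h}|^2/\vth^{\rm in}$, controls \eqref{ap5} after multiplication by $h$.

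The main obstacle is the right-hand side. The terms $-\vrh s_h(\partial_t\Theta + \vuh\cdot\Grad\Theta) - \frac{\kappa}{\vth}\Gradd\vth\cdot\Grad\Theta$, the gravity term, and the consistency errors from replacing $\Theta$ by its projections all have to be absorbed. Under \eqref{HP} they are bounded by $C + \varepsilon\|\Gradd\vth\|^2 + C\,h^{1+\alpha}$-type remainders. These are absorbed by Young's inequality and closed with a discrete Gronwall argument, which fixes the dependence of $C$ on $\|\vtB\|_{W^{1,\infty}}$.

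Finally, \eqref{ap4} follows directly from \eqref{ap3}. We have $\|\Gradd\vuh\|^2_{L^2} \approx \int_0^T\sum_\sigma h^{d}\,|\jump{\vuh}|^2/h^2 \,\dt = h^{-1}\int_0^T\intfacesint{|\jump{\vuh}|^2}\dt$. By \eqref{ap3}, $\int_0^T\intfacesint{|\jump{\vuh}|^2}\dt \le C h^{-\alpha}$, so $\|\Gradd\vuh\|^2_{L^2}\le C h^{-1-\alpha}$. The boundary faces are handled through the convention \eqref{VFV_bound}, using $|\vuh|\le\Ov u$ and $\|\Gradh\vuh\|\le C$.
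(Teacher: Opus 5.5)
Your proposal is correct and follows essentially the same route as the paper, which takes these bounds from the discrete ballistic energy balance of \cite{FeLMShYu:2024} (recalled in \eqref{BE0}, with the nonnegative numerical dissipation $D_{\rm E}$, $D_s(\hvth)$, the $\avs{\hvth}$-weighted heat flux and the boundary term $\frac{\kappa}{h}(\vth^{\rm in}-\vt_{B,h})^2/\vth^{\rm in}$), and then converts the bound on $\bS_h:\Gradh\vuh$ into \eqref{ap1} via the discrete Korn identity of Lemma~\ref{lem_norm}, which the paper proves by odd reflection across $x_d=\pm H$ rather than by a Fourier argument. One cosmetic slip: $c_v\log\vth-\log\vrh-1$ is $\partial_\vr(\vr s)$ at fixed $\vt$, not at fixed $\vr e$ (that would be $c_v\log\vth-\log\vrh-1-c_v$), so paired with $1/\vth$ in \eqref{VFV_E} your combination gives the balance of $\vr(s+c_v)$ rather than $\vr s$; this is harmless, because the ballistic functional changes only by the linear term $-c_v\Theta\vr$, which affects neither convexity nor the estimates.
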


%{\cmag Note that, \cite[Lemma A.4, Remarks A.5 and A.6]{FeLMShYu:2024} %only states 
%\begin{align*}
%\norm{\Gradd \vth}_{L^2(\timezone \times \Omega; \R^{d})} + \norm{ %\Dhuh}_{L^2(\timezone \times \Omega; \R^{d\times d})} \leq C. 
%\end{align*}
%Together with Lemma \ref{lem_norm} and $\Divh = tr(\Gradh) = %tr(\mathbb{D}_h)$, we obtain \eqref{ap1}.} 

Strictly speaking, the bounds on the velocity gradient obtained in \cite{FeLMShYu:2024} are stated in terms of $\mathbb{S}_h: \nabla_h \vu_h$. The bound \eqref{ap1} follows easily by applying the 
Korn--Poincar\' e formula stated in Lemma \ref{lem_norm}.

Furthermore, for any  $F \in W^{1,\infty}(\R) $ such that  $|F'| \aleq 1$, any $v_h \in Q_h$, and any compact $K \subset \Omega$,  there holds 
\begin{align}
	&\norm{\Gradd F( v_h)}_{L^2(K)} \leq \norm{\Gradd v_h}_{L^2(K)},  \nonumber \\
	& \norm{\nabla_h F(v_h)}_{L^2(K)} \leq  \norm{\nabla_h v_h}_{L^2(K)} \label{Lips}.
\end{align}
This observation yields the following corollary.

\begin{Corollary} \label{Cor1}
For any $(
\vrh, \vuh,  \vth)$ satisfying \eqref{HP}, \eqref{ap3}, \eqref{ap5}, the composed functions  
$G(\vrh)$, $G(\vth)$, $G(\vuh) = (G(u_h^1), \dots, G(u_h^d))$ satisfy 
\begin{align}\label{Cap1}
	&\norm{\Gradd G(\vth)}_{L^2(\timezone \times \Omega; \R^{d})} + \norm{ \Gradh G(\vuh)}_{L^2(\timezone \times \Omega; \R^{d\times d})} \leq C ,
\end{align}
\begin{align} \label{Cap3}
	\int_{0}^{T}\intfacesint{ \left( h^\alpha + \abs{ \avs{\vuh}\cdot \vc{n} } \right) \, \abs{\jump{ (G(\vrh) ,G(\vuh), G(\vth)) }}^2  }\dt \leq C.
\end{align}
for any \emph{locally Lipschitz} $G: \R \to \R$, where we have set 
\[
\avs{ G(\vuh) }|_{\sigma} = \frac{1}{2} \Big( G(\vuh^{\rm in}) + G(- \vuh^{\rm in}) \Big)|_\sigma,\ 
\avs{ G(\vth) }|_{\sigma} = \frac{1}{2} \Big( G(\vth^{\rm in}) + G (2 \vt_{B,h} - \vth^{\rm in}) \Big)|_{\sigma}
\]
for any $\sigma \in \mathcal{E}_{\rm ext}$. 
\end{Corollary}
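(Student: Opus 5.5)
The plan is to reduce everything to the Lipschitz case by exploiting the uniform $L^\infty$ bounds \eqref{HP}, and then to compare jumps and averages of $G(\cdot)$ with those of the original variables face by face. By \eqref{HP}, all values $\vrh^{\rm in/out}$, $\vth^{\rm in/out}$, $\vuh^{\rm in/out}$ lie in a fixed compact interval $I$. This interval is independent of $h$, and I enlarge it to contain the boundary reflections $2\vt_{B,h}-\vth^{\rm in}$ and $-\vuh^{\rm in}$, which is possible since $\vtB$ is bounded. On $I$ the locally Lipschitz $G$ has a Lipschitz constant $L_G$. The key pointwise inequality is
\[
\abs{\jump{G(v_h)}}_\sigma \leq L_G \abs{\jump{v_h}}_\sigma \quad \mbox{for every } \sigma\in\faces ,
\]
for each of $v_h\in\{\vrh,\vth,u_h^i\}$. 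On exterior faces it follows from the conventions in the statement. These conventions make the boundary ghost value equal to $G$ of the ghost value of $v_h$, namely $-\vuh^{\rm in}$ or $2\vt_{B,h}-\vth^{\rm in}$, which is exactly the value implied by \eqref{VFV_bound}.

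For \eqref{Cap3}, I square the inequality above, multiply by the nonnegative weight $h^\alpha+\abs{\avs{\vuh}\cdot\vn}$, and integrate over $\facesint$ and time. Together with \eqref{ap3} this gives \eqref{Cap3} with constant $L_G^2 C$. The same argument with \eqref{ap1}, since $\pdedgei$ is just $\jump{\cdot}/h$ placed on dual cells, gives the $\Gradd G(\vth)$ part of \eqref{Cap1}. The exterior faces there are handled through the ghost-value convention, and the contribution of those faces is controlled by $L_G^2$ times $h^{-2}\cdot h\int\int_{\facesext}\abs{\vth^{\rm in}-\vt_{B,h}}^2$, which by \eqref{ap5} is bounded.

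The main obstacle is $\Gradh G(\vuh)$. The operator $\Gradh$ is built from averages $\avs{\cdot}$ rather than jumps, so a Lipschitz bound on averages is useless directly. I will use the discrete identity on a cell $K$: because $\sum_{\sigma\in\facesK}\vn=0$, one has $\sum_{\sigma}\vn\otimes\avs{v}/h=\sum_\sigma \vn\otimes\jump{v}/(2h)$. On uniform cubes this is precisely the centered difference $(v_{K+e_i}-v_{K-e_i})/(2h)$, so $\Gradh$ is a centered difference and
\[
\abs{\Gradh G(v_h)}\big|_K \leq L_G \sum_\sigma \abs{\jump{v_h}}/(2h).
\]
However, the right-hand side involves $\Gradd$, which is only bounded by \eqref{ap4}, not by $\Gradh$. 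So I will instead write, per direction,
\[
G(v_{K+e_i})-G(v_{K-e_i}) = \Lambda_K\,(v_{K+e_i}-v_{K-e_i}), \qquad \Lambda_K=\int_0^1 G'(\cdot)\,{\rm d}s ,
\]
where the difference-quotient coefficient satisfies $\abs{\Lambda_K}\le L_G$. This gives $\abs{\Gradh G(v_h)}_K\le L_G\abs{\Gradh v_h}_K$ componentwise, which is the content of \eqref{Lips} after rescaling $G/L_G$. The ghost values at the boundary are again consistent with the convention in the statement, and I need $G$ defined by Rademacher or mollification so that the mean-value coefficient makes sense. With this, \eqref{ap1} yields the velocity part of \eqref{Cap1}. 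Finally, I would check that the constants depend only on $L_G$ and on the constant of Lemma~\ref{lm_ub}.
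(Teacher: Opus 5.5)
Your proposal is correct and follows the paper's route. The paper argues the same way: restrict the locally Lipschitz $G$ to the $h$-independent range given by \eqref{HP}, enlarged to contain the ghost values $-\vuh^{\rm in}$ and $2\vt_{B,h}-\vth^{\rm in}$. It then transfers \eqref{ap1} and \eqref{ap3} through pointwise Lipschitz bounds on the jumps and on the centered differences that define $\Gradh$, which is exactly the observation \eqref{Lips}. Two minor points. The mean-value coefficient $\Lambda_K$ is superfluous, since $|G(a)-G(b)|\le L_G|a-b|$ applied directly to the centered difference suffices. You are also right to invoke \eqref{ap1}: it is genuinely needed for \eqref{Cap1}, even though the corollary lists only \eqref{HP}, \eqref{ap3}, \eqref{ap5}.
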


\subsection{Discretization error - compatibility and consistency} \label{DEC}
Here and hereafter, we suppose the numerical solutions 
satisfy the bound \eqref{HP}. 
Moreover, the time step $\Delta t$ and the grid discretization parameter $h$ are related as
\[
0 < \underline{C_{h, \Delta t}} h \leq \Delta t \leq \Ov{C_{h, \Delta t}} h.
\]
We set  
\begin{equation} \label{lambda}
\Lambda = T + \| \vtB \|_{W^{2, \infty}((0,T) \times \Omega)} + \frac{1}{\underline{\vr}} + \Ov{\vr} + \frac{1}{\underline{\vt}} + \Ov{\vt} + \Ov{u} +  
\frac{1}{\underline{C_{h, \Delta t}} } +\Ov{C_{h, \Delta t}} . 
\end{equation}

Our goal is to estimate the consistency/compatibility errors 
for the numerical approximations under the hypothesis \eqref{HP}. They are related to  both 
the numerical scheme \eqref{VFV_D}--\eqref{VFV_E} and 
the numerical counterparts of the entropy inequality \eqref{w7} and 
the ballistic energy inequality \eqref{w8}. The results are similar but 
not exactly the same as in  \cite[Lemma A.7]{FeLMShYu:2024}. 
In the estimates below, we will systematically use the following notation
\[
U_h \in_{b} X \ \Longleftrightarrow \  U_h \ \mbox{is bounded in}\  X \ \mbox{uniformly for}\  h \to 0.
\]

\subsubsection{Equation of continuity}

We start with the consistency errors related to the equation of 
continuity \eqref{w3}. We set
\begin{align} 
\myangle{ {\it error}_{\vr}; \phi}  &=   \intO{ \vrh^0 \phi (0, \cdot) }  +
\intTOB{ \vrh \partial_t \phi + \vrh \vuh \cdot \Grad \phi }, 
\label{cons-1} \\ 
\myangle{{\it error}_{\widetilde{\vr}}; \phi} &= \intTOB{ \partial_t \widetilde{\vrh}  \phi - \vrh \vuh \cdot \Grad \phi }.   
\label{cons-1a}
\end{align} 

\begin{Lemma}[{\bf Consistency error in the continuity equation}] \label{Lcons1}
For the consistency errors defined in \eqref{cons-1}, \eqref{cons-1a} 
there holds 
\begin{equation} 
\left|	\myangle{ {\it error}_{\vr}; \phi} \right| \leq C(\Lambda) 
\| \phi \|_{C ([0,T];C^{2}(\Ov{\Omega}))  \cap W^{1,2}(0,T; L^2 (\Omega)) }
\left( h^{(1-\alpha)/2} + h^{(1+\alpha)/2} \right)
\label{con-e1}
\end{equation}
for any $\phi \in C^1_c([0,T) ; C^2(\Ov{\Omega}))$,
\begin{equation}
\left| \myangle{{\it error}_{\tvr}; \phi} \right| \leq C(h , \Lambda)\| \phi \|_{L^2(0,T; W^{1,2}(\Omega))}, \ \mbox{where} \  C(h , \Lambda) \to 0\ \mbox{as}\ h \to 0
\label{con-e1a}
\end{equation} 
for any $\phi \in L^2(0,T; W^{1,2}(\Omega))$. 
In particular, 
\begin{equation} \label{derivr}
\partial_t \tvr_h \in_b L^2(0,T; W^{-1,2}(\Omega)), 
\end{equation}
and 
\begin{equation} \label{contr}
\{  \tvr_h \}_{h \searrow 0} \ \mbox{is precompact in}\ 
C_{\rm weak}([0,T] ; L^q(\Omega)) \ \mbox{for any}\ 1 \leq q < \infty. 
\end{equation}	
\end{Lemma}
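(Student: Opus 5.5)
We test the discrete continuity equation \eqref{VFV_D} with $\phi_h=\PiQ\phi(t,\cdot)$, integrate in time and compare with \eqref{cons-1}.

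\emph{Time derivative.} Summation by parts in time gives
\[
\int_0^T\intO{D_t\vrh\,\PiQ\phi}\dt=-\intO{\vrh^0\phi(0,\cdot)}-\intTO{\vrh\,\partial_t\phi}+R_t .
\]
The remainder $R_t$ stems from the $\TS$-shift between the piecewise constant and the piecewise linear interpolants. It is controlled by $\TS\,\|\vrh\|_{L^\infty}\|\partial_t\phi\|_{L^1}$, plus a boundary-in-time term near $t=0$ that is bounded similarly. Hence $|R_t|\aleq h\|\phi\|_{W^{1,2}(0,T;L^2)}$.

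\emph{Flux term.} We write $\jump{\PiQ\phi}=h\,\Grad\phi\cdot\vn+O(h^2\|\phi\|_{C^2})$ on each face. The diffusive part is handled by Cauchy--Schwarz together with \eqref{ap3}:
\[
h^\alpha\int_0^T\intfacesint{\abs{\jump{\vrh}}\abs{\jump{\PiQ\phi}}}\dt\aleq \Big(\int_0^T\intfacesint{h^\alpha\jump{\vrh}^2}\dt\Big)^{1/2}\Big(h^\alpha\int_0^T\intfacesint{h^2}\dt\Big)^{1/2}\aleq h^{(1+\alpha)/2},
\]
since $\int_{\facesint}\ds\approx h^{-1}$.

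For the upwind part we write $\vrh^{\rm up}=\avs{\vrh}-\tfrac12\,{\rm sgn}(\avs{\vuh}\cdot\vn)\jump{\vrh}$. The jump correction is bounded by
\[
\int_0^T\intfacesint{\abs{\avs{\vuh}\cdot\vn}\abs{\jump{\vrh}}\,h}\dt ,
\]
and Cauchy--Schwarz with the $\abs{\avs{\vuh}\cdot\vn}$-weighted part of \eqref{ap3} gives $O(h^{1/2})$. The remaining central term
\[
\int_{\facesint}\avs{\vrh}\avs{\vuh}\cdot\vn\,h\,\Grad\phi\cdot\vn\,\ds
\]
must be compared with $\intO{\vrh\vuh\cdot\Grad\phi}$. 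Splitting each face average into its two neighbouring cells, the difference consists of products $\jump{\vrh}\,\vuh$ and $\vrh\,\jump{\vuh}$ times $h\,\Grad\phi$, plus $O(h)$ from replacing $\Grad\phi$ on faces by its cell value.

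\emph{Main obstacle.} The term with $\jump{\vuh}$ is the critical one, because \eqref{ap4} only gives $\|\Gradd\vuh\|\aleq h^{-(1+\alpha)/2}$. Using the $h^\alpha$-weighted part of \eqref{ap3} for $\jump{\vuh}$, Cauchy--Schwarz yields
\[
h^{-\alpha/2}\,h\,h^{-1/2}=h^{(1-\alpha)/2}.
\]
The $\jump{\vrh}$ term is bounded the same way. I would arrange these terms via the discrete product rule, in the spirit of Lemma~\ref{lem_CI}. Collecting all contributions gives \eqref{con-e1}.

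For \eqref{con-e1a} the key point is that $\phi$ is only in $L^2(W^{1,2})$. We write $\intTO{\partial_t\tvr_h\,\phi}=\intTO{D_t\vrh\,\PiQ\phi}$, since $D_t\vrh\in Q_h$ is constant in space on cells. Applying \eqref{VFV_D}, the flux term becomes $\int\!\!\int \Fup\jump{\PiQ\phi}$. On interior faces, $\jump{\PiQ\phi}/h$ is controlled in $L^2$ by $\|\Grad\phi\|_{L^2}$ via the standard inequality $\sum_\sigma h^{d-2}\jump{\PiQ\phi}^2\aleq\|\Grad\phi\|_{L^2}^2$. Repeating the estimates above with $\Grad\phi$ replaced by this $L^2$ quantity produces the same powers of $h$, except for the central-term comparison. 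There the mismatch between the face values and cell values of $\Grad\phi$ cannot be bounded by $h\|\phi\|_{C^2}$. Instead we use density of smooth functions in $L^2(W^{1,2})$ together with the uniform $L^\infty$ bounds \eqref{HP}: approximate $\phi$ by $\phi_\varepsilon$, apply the smooth estimate to $\phi_\varepsilon$, and bound the remainder uniformly by $\|\phi-\phi_\varepsilon\|_{L^2W^{1,2}}$. This yields a constant $C(h,\Lambda)\to0$ in the sense of \eqref{con-e1a}.

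Finally, \eqref{derivr} follows because $\vrh\vuh$ is bounded in $L^\infty$ and the error term is bounded in $L^2(W^{-1,2})$. For \eqref{contr}, $\tvr_h$ is bounded in $L^\infty$, hence in $L^\infty(0,T;L^q(\Omega))$, and \eqref{derivr} gives equicontinuity in $W^{-1,2}$. An Arzel\`a--Ascoli argument combined with density then gives precompactness in $C_{\rm weak}([0,T];L^q)$.
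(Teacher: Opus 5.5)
\textbf{Main gap: the density argument for \eqref{con-e1a}.} Your approximation argument does not prove \eqref{con-e1a}. It shows only that $\myangle{{\it error}_{\tvr};\phi}\to 0$ for each \emph{fixed} $\phi\in L^2(0,T;W^{1,2}(\Omega))$. The smooth estimate applied to $\phi_\varepsilon$ carries the factor $\|\phi_\varepsilon\|_{C^2}$. No choice of $\phi_\varepsilon$ achieves $\|\phi-\phi_\varepsilon\|_{L^2(0,T;W^{1,2})}\le\varepsilon$ with a common bound on $\|\phi_\varepsilon\|_{C^2}$ over the whole unit ball; for instance, $n^{-1}\sin(nx_1)$ stays at distance bounded below from any $C^2$-bounded set. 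So you obtain weak-$*$ convergence of the error functionals, not a constant $C(h,\Lambda)\to0$ independent of $\phi$.

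This matters downstream. The Div--Curl Lemma~\ref{div-curl} requires $\partial_t\tvr_h+\Div(\vrh\vuh)$ to be precompact in a negative Sobolev space, and a merely weakly vanishing sequence need not be.

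The missing idea is to never Taylor-expand $\Grad\phi$. First write $\avs{\vrh}\avs{\vuh}=\avs{\vrh\vuh}-\tfrac14\jump{\vrh}\jump{\vuh}$; the second piece is a jump term you can already bound by $h^{(1-\alpha)/2}\|\Grad\phi\|_{L^2}$. Next identify $\intfacesint{\avs{\vrh\vuh}\cdot\vn\,\jump{\PiQ\phi}}$ with $\intO{\vrh\vuh\cdot\Gradh\PiQ\phi}$, using the extension $\jump{\PiQ\phi}|_{\facesext}=0$. Since $\vrh\vuh$ is cellwise constant, the divergence theorem on each cell gives the \emph{exact} identity
\[
\intO{\vrh\vuh\cdot\big(\Grad\phi-\Gradh\PiQ\phi\big)}=\sum_{K\in\mesh}\sum_{\sigma\in\facesK}(\vrh\vuh)_K\cdot\vn\int_\sigma\big(\phi-\avs{\PiQ\phi}\big)\ds .
\]
Summation by parts turns this into $-\intfacesint{\jump{\vrh\vuh}\cdot\vn\,(\phi-\avs{\PiQ\phi})}$ plus a term on $\facesext$ involving $(\vrh\vuh)^{\rm in}$. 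Apply Cauchy--Schwarz with the $h^\alpha$-weighted part of \eqref{ap3}, together with the trace/projection bound $\int_0^T\int_{\faces}|\phi-\avs{\PiQ\phi}|^2\ds\dt\aleq h\|\Grad\phi\|^2_{L^2}$. This gives $(h^{(1-\alpha)/2}+h^{1/2})\|\Grad\phi\|_{L^2}$.

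Combined with your diffusive and upwind-jump estimates, which you already showed need only $\|\Grad\phi\|_{L^2}$, this is the paper's estimate \eqref{es-conv}. It is uniform in $\phi$ and yields \eqref{con-e1a} and the flux part of \eqref{con-e1} simultaneously, with no $C^2$ norm.

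\textbf{Secondary error: the time-derivative remainder.} Your bound $\TS\|\vrh\|_{L^\infty}\|\partial_t\phi\|_{L^1}$ is not correct. The remainder equals $\intTO{(\vrh-\tvr_h)\,\partial_t\phi}$, and $\vrh-\tvr_h$ has the size of $|\vrh^k-\vrh^{k-1}|$, not $O(\TS)$. With $L^\infty$ bounds alone no rate is possible: let $\vrh^k$ alternate in $k$ against a $\partial_t\phi$ alternating on the $\TS$-scale. Then the norm of $\phi$ appearing in \eqref{con-e1} stays $O(1)$, while the remainder is also $O(1)$.

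The step is rescued by \eqref{ap2}. Since $|\vrh-\tvr_h|\le\TS|D_t\vrh|$ and $\TS^{1/2}\|D_t\vrh\|_{L^2}\le C$, you get $|R_t|\aleq\TS^{1/2}\|\partial_t\phi\|_{L^2}\aleq h^{1/2}$. This is exactly what the paper uses, and it is still dominated by $h^{(1-\alpha)/2}+h^{(1+\alpha)/2}$. With these two repairs, the rest of your argument goes through: the $C^2$ flux bookkeeping, \eqref{derivr}, and \eqref{contr}.
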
	

 For the proof of \eqref{con-e1} see Appendix \ref{ap_css}, for \eqref{con-e1a} see Appendix \ref{sec-rmk}. 

\subsubsection{Momentum equation}

Similarly to the above, we define the consistency errors 
\begin{align}
\myangle{ {\it error}_{\vr \vu}; \bfphi}  &=   \intO{ \vrh^0 \vuh^0 \cdot \bfphi(0, \cdot) }  + 
\intTOB{ \vrh \vuh \cdot \partial_t \bfphi + \vrh \vuh \otimes \vuh : \Grad \bfphi } 
\br
& - \intTO{ ( \bS_h - p_h \I) : \Grad \bfphi }   - \intTO{\vrh \vc{g} \cdot \bfphi},   \label{cons-2} \\ 
\myangle{{\it error}_{\widetilde{\vr \vu}};\bfphi} &=
\intTOB{ \partial_t \widetilde{\vrh \vuh} \cdot  \bfphi - \vrh \vuh \otimes \vuh : \Grad \bfphi }  
 + \intTO{ ( \bS_h - p_h \I) : \Grad \bfphi }\br  &+ \intTO{\vrh \vc{g} \cdot \bfphi} . 
\label{cons-2a}
\end{align}

\begin{Lemma}[{\bf Consistency error in the momentum equation}] \label{Lcons2}
For the consistency errors defined in \eqref{cons-2}, \eqref{cons-2a} 
there holds 
\begin{equation} 
\left| { \myangle{{\it error}_{\vr \vu}, \bfphi}  } \right|  \leq C (\Lambda) \| \bfphi \|_{C ([0,T];C^{2}(\Ov{\Omega}; \R^d)) \cap W^{1,2}(0,T; L^2(\Omega; \R^d))} \left( h^{(1-\alpha)/2} + h^{(1+\alpha)/2} \right) \label{cone2} \end{equation}
for any $\bfphi \in C^1_c([0,T ); C^2_c({\Omega}; \R^d))$, 
\begin{equation}
\left| \myangle{{\it error}_{\widetilde{\vr \vu}};\bfphi} \right| \leq 
C(h, \Lambda) \| \bfphi \|_{L^2(0,T; W^{1,2}_0 (\Omega; R^d))} ,
\ \mbox{where }  C(h, \Lambda) \to 0 \mbox{ uniformly for }\ h  \to 0
\label{cone2a}
\end{equation}	
for any $\bfphi \in L^2(0,T; W^{1,2}_0(\Omega; \R^d))$. 
In particular, 
\begin{equation} \label{derivm}
\partial_t \widetilde{\vr_h \vu_h} \in_b L^2(0,T; W^{-1,2}(\Omega; \R^d)), 
\end{equation} 	
and 
\begin{equation} \label{contm}
\{ 	\widetilde{\vr_h \vu_h} \}_{h \searrow 0} 
\ \mbox{is precompact in}\ C_{\rm weak}([0,T]; L^q(\Omega; \R^d)) 
\ \mbox{for any}\ 1 \leq q < \infty.
\end{equation}	
\end{Lemma}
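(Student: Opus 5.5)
We follow the proof of Lemma~\ref{Lcons1}, now for the momentum scheme \eqref{VFV_M}. The plan is to take $\bfphi_h = \PiQ \bfphi$ as the discrete test function, since it is admissible because $\bfphi$ is compactly supported in $\Omega$. We then sum \eqref{VFV_M} over $k$ (multiplied by $\TS$) and compare term by term with \eqref{cons-2}.

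\textbf{Step 1: time derivative.} Summation by parts in time, together with $\vrh^0\vuh^0$ from \eqref{init-def}, produces $\intO{\vrh^0\vuh^0\cdot\bfphi(0)} + \intTO{\vrh\vuh\cdot\partial_t\bfphi}$. The remainder comes from the one-step time shift. It is controlled by $\TS\,\|\partial_t\bfphi\|_{L^2}\,\|\vrh\vuh\|_{L^2}$, using \eqref{HP} and \eqref{ap2}, and is $O(h^{1/2})$ or better.

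\textbf{Step 2: convective flux.} We write
\[
\intfacesint{\Up[\vrh\vuh,\vuh]\cdot\jump{\PiQ\bfphi}} - \intTO{\vrh\vuh\otimes\vuh:\Grad\bfphi}.
\]
The standard splitting rewrites the upwind value as $\avs{\vrh\vuh} - \tfrac12\,\mathrm{sgn}(\avs{\vuh}\cdot\vn)\jump{\vrh\vuh}$. The jump part is estimated by Cauchy--Schwarz against \eqref{ap3}. For the weight $|\avs{\vuh}\cdot\vn|$ this gives $O(h^{1/2})$, and together with the artificial diffusion $h^\alpha\jump{\cdot}$, weighted by $h^{\alpha}$, it gives
\[
h^{\alpha}\Big(\int h^{-\alpha}\Big)^{1/2}\, h \,\|\nabla\bfphi\|_\infty\, h^{-1/2} \approx h^{(1+\alpha)/2}.
\]
The remaining averaged part differs from the exact term by an expression of the form $(\PiQ\bfphi \text{ vs. face traces})$ multiplied by jumps of $\vrh\vuh$ or $\vuh$. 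These jumps are again bounded via \eqref{ap3}, now with weight $h^\alpha$, which yields $h\cdot h^{-\alpha/2}\cdot h^{-1/2} = h^{(1-\alpha)/2}$. Here $C^2$ regularity of $\bfphi$ is used for the second-order Taylor expansions of cell averages versus face averages.

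\textbf{Step 3: viscous and pressure terms.} We must compare $\intO{(\bS_h-p_h\I):\bD_h\PiQ\bfphi}$ with $\intO{(\bS_h-p_h\I):\Grad\bfphi}$. Since $\bS_h$ is symmetric, it suffices to compare $\Gradh\PiQ\bfphi$ with $\PiQ\Grad\bfphi$. Because $\Gradh$ is a centred difference over a cell of width $2h$ of the averages, Taylor expansion with $\bfphi\in C^2$ gives $|\Gradh\PiQ\bfphi - \Grad\bfphi| \le Ch\|\bfphi\|_{C^2}$ on cells, and compact support removes boundary cells. The result is bounded by $Ch\,\|\bS_h\|_{L^1}$, which is $O(h)$ by \eqref{ap1}. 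The gravity term is trivial.

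\textbf{Step 4: the tilde version \eqref{cone2a} and compactness.} Here $\bfphi$ is only $L^2W^{1,2}_0$, and the cancellation of the time derivative exploits $\partial_t\widetilde{\vrh\vuh} = D_t(\vrh\vuh)$. We first apply the above estimate to a mollification $\bfphi_\varepsilon$. Every term in \eqref{cons-2a} is bounded linearly by $\|\bfphi\|_{L^2W^{1,2}}$: use the discrete gradient stability of $\PiQ$ on $W^{1,2}_0$ with \eqref{ap1}, and use \eqref{ap3} for the fluxes, where an $h^{1/2}$ scaling is lost compared with $C^2$ data but still gives a bound of the form $C h^{\min}$ times an interpolation remainder $\|\bfphi-\PiQ\bfphi\|/h \lesssim \|\Grad\bfphi\|$. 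Choosing $\varepsilon=\varepsilon(h)\to0$ suitably then gives $C(h,\Lambda)\to0$. This density argument is the main obstacle, because the flux terms with weight $h^\alpha$ and only $W^{1,2}$ test functions give merely $O(h^{(1+\alpha)/2}\cdot h^{-1/2}\cdots)$. If that fails, we will instead bound the convective and dissipative terms directly with the face-trace (rather than cell-average) projection $\PiF\bfphi$, exploiting $\|\jump{\PiQ\bfphi}\|_{L^2(\faces)}\lesssim h^{1/2}\|\Grad\bfphi\|_{L^2}$.

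Finally, \eqref{derivm} follows from \eqref{cone2a} together with the uniform bounds on $\vrh\vuh\otimes\vuh$, $p_h$ and $\bS_h$ in $L^2$. The precompactness \eqref{contm} then follows from Arzel\`a--Ascoli in $C_{\rm weak}$, using the $L^\infty$ bound \eqref{HP} and the density of $W^{1,2}_0$ in $L^{q'}$.
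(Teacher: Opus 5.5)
Your Steps 1--3 follow essentially the paper's route for \eqref{cone2}. That route is: the time-shift error via \eqref{ap2}; a splitting of the upwind flux into a centred part, the upwind diffusion and the $h^\alpha$-diffusion; Cauchy--Schwarz against \eqref{ap3}; and an $O(h)$ comparison of $\Gradh \PiQ\bfphi$ with $\Grad\bfphi$. In Step 1 the shift remainder should be bounded by $\TS\,\|D_t(\vrh\vuh)\|_{L^2}\|\partial_t\bfphi\|_{L^2}\lesssim \TS^{1/2}$. A bound with $\|\vrh\vuh\|_{L^2}$ in place of $\|D_t(\vrh\vuh)\|_{L^2}$ is not available for a one-step shift.

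The genuine gap is Step 4. Mollifying and choosing $\varepsilon=\varepsilon(h)$ cannot produce a constant $C(h,\Lambda)\to 0$ in front of $\|\bfphi\|_{L^2(0,T;W^{1,2}_0)}$. The remainder $\langle \mathit{error}_{\widetilde{\vr\vu}};\bfphi-\bfphi_\varepsilon\rangle$ is only controlled by $C(\Lambda)\|\bfphi-\bfphi_\varepsilon\|_{L^2(0,T;W^{1,2})}$. For every $\varepsilon>0$, $\sup_{\|\bfphi\|\le 1}\|\bfphi-\bfphi_\varepsilon\|_{L^2(0,T;W^{1,2})}$ stays of order one: take $\bfphi$ oscillating on a scale much finer than $\varepsilon$. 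A density argument therefore gives at most convergence for each fixed $\bfphi$, not a bound uniform on bounded sets of test functions.

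The obstacle that motivated the density argument is not real, and your fallback is exactly what the paper does in Appendix~\ref{sec-rmk}. Since $\partial_t\widetilde{\vrh\vuh}=D_t(\vrh\vuh)$ is piecewise constant in space, testing the scheme with $\PiQ\bfphi$ leaves two terms: the convective error $E_F(\vrh\vuh,\bfphi)$, and $E_{\vm}(\bfphi)=\intTO{(\bS_h-p_h\I):(\Grad\bfphi-\Gradh\PiQ\bfphi)}$.

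For the convective term, take $\bfphi\in W^{1,2}$. The scaled Poincar\'e and trace inequalities give $\int_{\facesint}\jump{\PiQ\bfphi}^2\ds+\int_{\faces}|\bfphi-\PiW\PiQ\bfphi|^2\ds\lesssim h\|\Grad\bfphi\|_{L^2}^2$. With this, the splitting into $E_1,\dots,E_4$ yields \eqref{es-conv}: the same rate $h^{(1-\alpha)/2}+h^{(1+\alpha)/2}$, with only $\|\Grad\bfphi\|_{L^2}$ on the right. The loss of $h^{-1/2}$ you feared comes only from having worked with $\|\Grad\bfphi\|_{L^\infty}$ in Step 2.

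The term $E_{\vm}$ is different. By \eqref{ap1}, \eqref{HP} and the $L^2$-stability of $\Gradh\PiQ$, it is merely bounded by $C(\Lambda)\|\bfphi\|_{L^2(0,T;W^{1,2})}$. These bounds cannot make it uniformly small, because $\Grad\bfphi-\Gradh\PiQ\bfphi$ is of order one in $L^2$ for $\bfphi$ oscillating on the scale $h$. The paper, too, only records this boundedness; see \eqref{me-bb}.

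So your corrected argument, like the paper's, delivers two things: a vanishing bound for the transport part, and uniform boundedness of the whole functional in $L^2(0,T;W^{-1,2})$. That is all that \eqref{derivm}, \eqref{contm} and the subsequent Div--Curl arguments need.
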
	

For the proof of \eqref{cone2} see Appendix \ref{ap_css}, for 
\eqref{cone2a} see Appendix \ref{sec-rmk}.

\subsubsection{Internal energy equation and entropy balance}

Similarly to the preceding part, we define the consistency error in the internal energy equation
\begin{align} 
 \myangle{{\it error}_{\widetilde{\vr \vt}}; \phi}  =&\intTOB{  c_v \pd_t \widetilde{\vrh \vth}  \phi  -   (c_v \vrh \vth  \vuh - \kappa  \Gradd \vth)\cdot \Grad \phi }  \br
& -\intTO{ (\bS_h-p_h \I)  : \Gradh \vuh \phi },
\label{cons3}   
\end{align}
together with the associated entropy balance error 
\begin{align}
\myangle{ {\it error}_{\vr s}; \phi}  =&    \intO{ \vrh^0 s(\vr^0_h, \vt^0_h) \phi (0, \cdot) }   +
\intTOB{ \vrh s_h (\pd_t\phi + \vuh \cdot \Grad \phi ) -  \frac{\kappa}{\vth} \Gradd \vth \cdot \Grad \phi }  
\br & 
+ \intTO{ \frac{\phi}{\vth} \left(\kappa \frac{ \chi_h}{ \vth} \abs{\Gradd \vth }^2 + \difuh  \right)} .
\label{cons3-a}
\end{align}
Here, $\chi_h$ is a measurable function satisfying 
\begin{equation} \label{chi}
0 < \underline{\chi} \leq \chi_h \leq \Ov{\chi},\ \chi \to 1 
\ \mbox{in}\ L^1((0,T) \times \Omega) \ \mbox{as}\ h \to 0.
\end{equation}	

\begin{Lemma}[{\bf Consistency error in the internal energy/entropy equation}] \label{Lcons3}
	For the consistency errors defined in \eqref{cons3}, \eqref{cons3-a} 
	there holds 
	\begin{equation} 
\left| \left< {\it error}_{\widetilde{\vr \vt}}; \phi \right> \right| \leq 
C(\Lambda) \| \phi \|_{L^2(0,T; W^{1,2}_0 (\Omega; R^d))}  
\label{cons4}, \end{equation}
for any $\phi \in L^2(0,T; W^{1,2}_0 (\Omega; R^d))$ and $h \to 0$,
\begin{equation} 
\left| \left< {\it error}_{{\vr s}}; \phi \right>  \right| \leq 
C(\Lambda)  \| \phi \|_{C ([0,T];C^{2}(\Ov{\Omega})) \cap W^{1,2}(0,T; L^2 (\Omega)) }\left( h^{(1-\alpha)/2} + h^{(1+\alpha)/2} \right)
\label{cons5}
		\end{equation}
for any $\phi \in C^1_c([0,T]; C^2_c({\Omega}))$, $\phi \geq 0$. In particular, 
\begin{equation} \label{derivt}
\partial_t \widetilde{\vrh \vth} \in_b L^2(0,T; W^{-1,2}(\Omega)) + L^1((0,T) \times \Omega),  
\end{equation}	
and 	
\begin{equation} \label{contt}
\{  \widetilde{\vrh \vth} \}_{h \searrow 0} \ \mbox{is precompact in}\ L^q(0,T; W^{-1,2}(\Omega)) \ \mbox{for any}\ 1 \leq q < \infty.
	\end{equation}
\end{Lemma}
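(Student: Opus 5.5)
For \eqref{cons4} I take $\phi_h = \PiQ \phi$ in the energy equation \eqref{VFV_E} and compare it term by term with \eqref{cons3}, exactly as in the proofs of Lemma~\ref{Lcons1} and Lemma~\ref{Lcons2}.

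\textbf{Time derivative.} Since $\PiQ$ is $L^2$-orthogonal on $Q_h$, the term $c_v\intO{ D_t(\vrh\vth)\phi_h}$ equals $c_v \intO{\pd_t \widetilde{\vrh\vth}\, \phi}$ on each time interval. No error arises here.

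\textbf{Convective flux.} The upwind part $-c_v\intfacesint{\Fup(\vrh\vth,\vuh)\jump{\PiQ\phi}}$ is rewritten against $c_v\intO{\vrh\vth\vuh\cdot\Grad\phi}$. Use $|\jump{\PiQ\phi}|\aleq h^{1/2}\,(\text{local } \|\Grad\phi\|_{L^2})$ and the bounds \eqref{HP}, \eqref{ap3}. The remainder then has the form
\[
\int_0^T\intfacesint{\big(h^\alpha+|\avs{\vuh}\cdot\vn|\big)|\jump{\vrh\vth}|\,|\jump{\PiQ\phi}|}\dt + \text{(averaging errors)},
\]
which is bounded by $C(\Lambda)\big(h^{(1+\alpha)/2}+h^{(1-\alpha)/2}\big)\|\phi\|_{L^2W^{1,2}}$ via Cauchy--Schwarz. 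The averaging errors compare $\avs{\vuh}$ with the cell values and are handled by \eqref{ap1}.

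\textbf{Heat flux.} The diffusion term $\frac{\kappa}{h}\jump{\vth}\jump{\PiQ\phi}$ is exactly $\kappa\intO{\Gradd\vth\cdot\Gradd \PiQ\phi}$. Its difference from $\kappa\intO{\Gradd\vth\cdot\Grad\phi}$ is controlled by \eqref{ap1} together with $\|\Gradd\PiQ\phi - \Grad\phi\|$. At this point I only need boundedness, not smallness, which is all \eqref{cons4} claims. The boundary term is handled with \eqref{ap5} and $\phi|_{\partial\Omega}=0$, which gives $|\phi_h^{\rm in}|\aleq h^{1/2}$ in the trace sense.

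\textbf{Source term.} The source $(\bS_h-p_h\I):\Gradh\vuh\,\phi$ is only in $L^1$, by \eqref{ap1} and \eqref{HP}. I therefore do not estimate it in $W^{-1,2}$. It goes into the $L^1$ part of \eqref{derivt}, and \eqref{cons4} should be read with this term kept in the definition.

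For \eqref{cons5} I follow \cite{FeLMShYu:2024}: test \eqref{VFV_E} by $\PiQ(\phi/\vth)$ and the continuity equation \eqref{VFV_D} by $\PiQ(\phi)\,(s_h - \pd_\vr(\vr s))$, then subtract.

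\textbf{Time part.} Convexity of $-\vr s(\vr,\vt)$ in the conservative variables gives $D_t(\vrh s_h)$ up to a nonnegative numerical dissipation, which has the right sign because $\phi\ge0$.

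\textbf{Upwind part.} The same convexity argument applied to the upwind fluxes again yields nonnegative dissipation plus remainders of size $h^{(1\pm\alpha)/2}$, exactly as in the continuity estimate.

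\textbf{Heat part.} Discretely, $\frac{\kappa}{h}\jump{\vth}\jump{1/\vth}$ equals $-\kappa \chi_h|\Gradd\vth|^2/\vth^2$, where $\chi_h = \vth^{\rm in}\vth^{\rm out}/\avs{\vth}^2$-type mean value quotient. Then \eqref{chi} follows from \eqref{ap3}: the jumps tend to zero in measure, so $\chi_h\to1$.

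\textbf{Main obstacle.} This is the $\chi_h$ bookkeeping, together with the mixed term $\Gradd\vth\cdot\Gradd\PiQ\phi/\vth$. The latter is handled with a discrete product rule plus \eqref{ap1}.

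Finally, \eqref{derivt} follows from \eqref{cons4} together with the $L^1$ bound on the dissipation source. For \eqref{contt} I combine \eqref{derivt} with the uniform $L^\infty$ bound on $\widetilde{\vrh\vth}$. Since $L^\infty\subset\subset W^{-1,2}$ compactly, and $L^1 \hookrightarrow W^{-s,2}$ for $s>d/2$, the Aubin--Lions--Simon lemma gives precompactness in $L^q(0,T;W^{-1,2})$.
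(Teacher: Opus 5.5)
Your argument for \eqref{cons4} is essentially the paper's. You test \eqref{VFV_E} with $\PiQ\phi$, observe that the time term is exact, bound the convective error by \eqref{ap3} together with $\int_{\facesint}\jump{\PiQ\phi}^2\,\D S_x\aleq h\|\Grad\phi\|_{L^2(\Omega)}^2$, and show that the heat-flux error is merely bounded. One small correction: the averaging errors are controlled through the jumps in \eqref{ap3}, not through \eqref{ap1}. For mere boundedness this does not matter.

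The one real difference is the exterior heat-flux term. You estimate it directly by \eqref{ap5} and the Poincar\'e bound $\int_{\facesext}|(\PiQ\phi)^{\rm in}|^2\,\D S_x\aleq h\|\Grad\phi\|_{L^2(\Omega)}^2$, which is valid since $\phi|_{\partial\Omega}=0$. The paper instead reflects $\phi$ oddly across $x_d=\pm H$, so that $\avs{\PiQ\phi}|_{\facesext}=0$ and the boundary term becomes part of $\kappa\int\Gradd\vth\cdot\Gradd\PiQ\phi$ on the boundary half dual cells. Both routes rest on the same information.

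No special reading of \eqref{cons4} is needed for the source term. Since $(\bS_h-p_h\I):\Gradh\vuh\in Q_h$, the same orthogonality of $\PiQ$ that you used for the time term gives $\int_\Omega(\bS_h-p_h\I):\Gradh\vuh\,\PiQ\phi\,\dx=\int_\Omega(\bS_h-p_h\I):\Gradh\vuh\,\phi\,\dx$. Hence the source in \eqref{cons3} cancels the one in \eqref{VFV_E} exactly. Your derivation of \eqref{derivt} and your Aubin--Lions--Simon argument for \eqref{contt} are correct, and they spell out what the paper leaves implicit.

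The gap is in \eqref{cons5}. You defer the whole computation to \cite{FeLMShYu:2024}. By the paper's own account, however, the only way \eqref{cons5} differs from the cited lemma is the temporal regularity of the test function: the bound may depend on $\phi$ in time only through $\|\pd_t\phi\|_{L^2((0,T)\times\Omega)}$. Establishing that dependence is precisely the step the paper proves, and you skip it. Your convexity argument produces $D_t(\vrh s_h)$, but not its conversion into $-\int\!\!\int\vrh s_h\pd_t\phi$ minus the initial term with this control. The conversion follows from the linear interpolant: for $r_h=\vrh s_h$,
\[
\int_\Omega r_h^0\,\phi(0,\cdot)\,\dx+\int_0^T\!\!\int_\Omega\big(r_h\,\pd_t\phi+D_tr_h\,\phi\big)\dxdt=\int_0^T\!\!\int_\Omega\big(r_h-\widetilde{r_h}\big)\,\pd_t\phi\dxdt .
\]
The right-hand side is at most $\TS\|D_tr_h\|_{L^2}\|\pd_t\phi\|_{L^2}\aleq\TS^{1/2}\|\pd_t\phi\|_{L^2}$ by \eqref{ap2}, because $\vr s$ is Lipschitz on the range allowed by \eqref{HP1}.

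There are two smaller points. First, the multiplier for \eqref{VFV_D} dictated by Gibbs' relation $\vt\,\D(\vr s)=\D(\vr e)-g\,\D\vr$, with $g=e+p/\vr-\vt s$, is $-g_h/\vth=s_h-c_v-1$. It is to be added to \eqref{VFV_E} tested by $\PiQ\phi/\vth$; your $s_h-\pd_\vr(\vr s)$ with subtraction is not this. Second, the time and upwind dissipation you discard is nonnegative but only bounded (by \eqref{ap2}, \eqref{ap3}), not small. Your argument, like the paper's (which relies on the cited entropy \emph{inequality}), therefore bounds $\langle{\it error}_{\vr s};\phi\rangle$ only from above, not in absolute value. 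That is exactly what \eqref{w7} needs, and it is why $\phi\ge0$ is required.
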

For the proof of \eqref{cons4} see Appendix \ref{sec-rmk}, for 
\eqref{cons5} see Appendix \ref{ap_css}.

\subsubsection{Ballistic energy balance}

Finally, we set 
\begin{align} \label{cons-4}
\myangle{ {\it error}_{BE};  \psi}  &=  \psi (0) \intO{ \left(\frac{1}{2} \vrh^0 |\vuh^0 |^2 + c_v \vrh^0 \vth^0 - \vrh^0 s(\vrh^0, \vth^0) \Theta(0, \cdot) \right)} 
+ \int_0^T \psi \intO{\vrh \vc{g} \cdot \vuh} \dt
\br
&
+ \int_0^T \partial_t \psi \intO{ \left(\frac{1}{2} \vrh |\vuh |^2 + c_v \vrh \vth - \vrh s_h \Theta  \right)    } \dt 
\br
&  
- \int_0^T \psi \intO{ \bigg( \frac{ \kappa\hvt \chi_h}{\vth^2} \; \abs{\Gradd \vth}^2 + \frac{\hvt}{\vth }\difuh \bigg)} \dt \br
&- \int_0^T \psi \intO{\bigg( \vrh s_h \partial_t \hvt + 
	\vrh s_h \vuh \cdot \Grad \hvt - \frac{\kappa}{\vth} \; \Gradd \vth \cdot \Grad \hvt \bigg)} \dt, 
\end{align}
where $\Theta \in W^{2, \infty}((0,T) \times \Omega)$ is an arbitrary (time-dependent) positive extension of the boundary temperature 
\begin{equation} \label{extbt}
	\Theta > 0,\ \Theta|_{\partial \Omega} = \vtB. 
\end{equation}	

\begin{Lemma}[{\bf Consistency error in the ballistic inequality}] \label{Lconsbe}
Given $\Theta$ satisfying \eqref{extbt}, there holds 
\begin{equation} \label{cons-4a} 
\myangle{ {\it error}_{BE};  \psi} \geq - C \Big( \Lambda, \| \Theta \|_{W^{2, \infty}(\Omega)} \Big) \| \psi \|_{W^{1,2}(0,T)}  \left( h^{(1-\alpha)/2} {+  h^{(1+\alpha)/2}}\right)
\end{equation}	
for any $\psi \in W^{1,2}[0,T]$, $\psi \geq 0$, $\psi(T) = 0$.
	\end{Lemma}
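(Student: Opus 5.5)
The plan is to derive a discrete ballistic energy inequality directly from the scheme and then compare it with the continuous functional defining $\myangle{ {\it error}_{BE};  \psi}$.

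\emph{Step 1: the discrete ballistic energy balance.} Following \cite{FeLMShYu:2024}, I combine three tests in \eqref{VFV}:
\begin{itemize}
\item the momentum equation \eqref{VFV_M} with $\bfphi_h = \vuh^k$, giving the discrete kinetic energy balance;
\item the internal energy equation \eqref{VFV_E} with $\phi_h = 1$;
\item the continuity equation \eqref{VFV_D} with $\phi_h = \frac12|\vuh^k|^2$, and with $\phi_h = \partial_\vr(\vr s)(\vrh^k,\vth^k)\,\PiQ \Theta$;
\item the internal energy equation \eqref{VFV_E} with $\phi_h = -\PiQ\Theta / \vth^k$, which gives the entropy part.
\end{itemize}
Summing these, convexity of $(\vr,\vr\vt)\mapsto -\vr s$ and of the kinetic energy produces nonnegative numerical dissipation terms from the backward Euler time derivative and from the upwind fluxes. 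These terms are discarded, and this is the reason the statement is only a one-sided inequality. The heat-flux term $\kappa\jump{\vth}\jump{\PiQ\Theta/\vth}/h$ is rewritten with a mean-value point. This produces $\frac{\kappa \chi_h \Theta}{\vth^2}|\Gradd\vth|^2 - \frac{\kappa}{\vth}\Gradd\vth\cdot\Grad\Theta$ plus remainders, which is how $\chi_h$ in \eqref{chi} arises; the $L^1$ convergence $\chi_h\to1$ follows from \eqref{ap3}. The boundary term in \eqref{VFV_E} is handled by writing $\vth^{\rm in}-\vtB$ against $\Theta^{\rm in}/\vth^{\rm in}$ and using $\Theta|_{\partial\Omega}=\vtB$ together with \eqref{ap5}. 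The difference between $\Theta$ and its discrete counterpart on the boundary faces is $O(h)$, since $\Theta\in W^{2,\infty}$.

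\emph{Step 2: pass to the time-integrated form.} I multiply the step-$k$ inequality by $\Delta t\,\psi(t_k)$ and sum over $k$, performing summation by parts in time. This generates $\int_0^T\partial_t\psi\,E_h\,\dt$, the initial term, and $-\vrh s_h\partial_t\Theta$. The time-shift errors are bounded by $\|\psi\|_{W^{1,2}}$, using \eqref{ap2} and $\Delta t\approx h$. They are of order $h^{1/2}$, which is dominated by the stated rates.

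\emph{Step 3: consistency of the spatial terms.} This step is the same as in the proofs of \eqref{con-e1} and \eqref{cons5} from Appendix \ref{ap_css}. Each flux term of the form $\intfacesint{\Fup(r_h,\vuh)\jump{\PiQ\Theta\,\cdot}}$ is compared with $\intO{r_h\vuh\cdot\Grad\Theta}$. The remainders are controlled by Cauchy--Schwarz against the weighted jumps in \eqref{ap3}:
\begin{itemize}
\item the artificial-diffusion part $h^\alpha\jump{r_h}$ contributes $h^{(1+\alpha)/2}$;
\item the upwind discrepancy contributes $h^{(1-\alpha)/2}$, via $h\,|\jump{r_h}| \le h^{1-\alpha/2}\,(h^{\alpha/2}|\jump{r_h}|)$, and Corollary \ref{Cor1} handles the composed quantities $s_h$ and $1/\vth$.
\end{itemize}
The viscous term $(\bS_h - p_h\I):\Gradh\vuh\,\Theta/\vth$ requires care. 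The pressure contributions $p_h\Divh\vuh$ from the kinetic and internal energy equations cancel exactly. The remaining piece $-\frac{\Theta}{\vth}p_h\Divh\vuh$ matches the convective entropy flux up to errors of the same type.

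The \textbf{main obstacle} is this last matching of the entropy convective term $\vrh s_h\vuh\cdot\Grad\Theta$. It requires discrete renormalization of the continuity equation, with test function $\partial_\vr(\vr s)$, combined with the internal energy equation divided by $\vth$. The resulting commutators between $\PiQ\Theta$, $1/\vth$ and the upwind jumps must have the correct sign or be $O(h^{(1\pm\alpha)/2})$. I would first try the approach of \cite[Lemma A.7]{FeLMShYu:2024}: expand $\jump{\Theta/\vth}=\avs{\Theta}\jump{1/\vth}+\avs{1/\vth}\jump{\Theta}$, absorb the first piece into the convex upwind dissipation, and bound the second using $|\jump{\Theta}|\le Ch$.
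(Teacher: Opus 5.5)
Your plan is essentially the paper's proof. The paper recalls the discrete ballistic energy balance \eqref{BE0} from \cite[Lemma A.2]{FeLMShYu:2024} (your Step 1), multiplies it by $\psi$, and discards the nonnegative terms $\psi\,(D_s(\hvth)+D_E)$. It then bounds the spatial remainders by $\|\psi\|_{L^\infty(0,T)}\lesssim\|\psi\|_{W^{1,2}(0,T)}$ times $h+h^{(1-\alpha)/2}+h^{(1+\alpha)/2}$ via \cite[Lemma A.7]{FeLMShYu:2024} (your Step 3). Finally it writes the time error as $-\int_0^T\partial_t\psi\,(\widetilde{E_B}-E_B)\,\dt=O(\|\partial_t\psi\|_{L^2(0,T)}\Delta t^{1/2})$, which is equivalent to your summation by parts in Step 2.

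One caveat on your last paragraph concerns the $\avs{1/\vth}\jump{\Theta}$-type pieces coming from the upwind and heat fluxes. They are of order one, so they must be matched with $\vrh s_h\vuh\cdot\Grad\Theta$ and $\frac{\kappa}{\vth}\Gradd\vth\cdot\Grad\Theta$ in \eqref{cons-4}, as your Step 1 already does for the heat flux, rather than bounded away. The estimate $|\jump{\Theta}|\le Ch$ disposes of such a piece entirely only in the artificial-diffusion part $h^\alpha\jump{\cdot}$, where it gives the rate $h^{(1+\alpha)/2}$.
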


For the proof, see Appendix \ref{ap_css}. 

\subsubsection{Compatibility of discrete differential operators}

We complete the preliminary material by stating the  compatibility
of the discrete and continuous differential operators.
 
\begin{Lemma}[{\bf Compatibility}]\label{lem_CI}
Let $(\vrh ,\vuh, \vth)$ be a trio of discrete functions satisfying \eqref{HP}, \eqref{ap1}, and 
\eqref{ap3} with $\alpha \in  \left(-1,1\right)$.

Then there holds 
\begin{equation} \label{cf1}
\left|	\intTOB{ \vu_{h} \cdot \Div \bbT + \Dhuh : \bbT  } \right| \leq C(\Lambda) \| \bbT \|_{L^2(0,T; W^{1,2}(\Omega; \R^{d \times d}_{\rm sym})) } h^{(1-\alpha)/2};
	\end{equation}
\begin{equation} \label{cf3}
\left| \intTOB{  (\vth - \vtB) \Div \Psi+ ( \Gradd\vth - \Grad \vtB) \cdot \Psi  } \right| \leq C(\Lambda)  \| \Psi \|_{L^2(0,T; W^{1,2}(\Omega; \R^d)) } h. 
\end{equation} 
%\begin{equation} \label{cf2}
%\left| 	\intTOB{ |\vuh|^2  \Div \Psi + \Gradh (|\vuh|^2) \cdot \Psi  } \right| \leq C(\Lambda)  \| \Psi \|_{L^2(0,T; W^{1,2}_0(\Omega; \R^d)) } h^{(1-\alpha)/2};
%\end{equation}	
%\begin{equation} \label{cf4}
%\left|	\intTOB{ \vth^2 \Div \Psi+ \Gradd(\vth^2)  \cdot \Psi  } \right| \leq C(\Lambda)  \| \Psi \|_{L^2(0,T; W^{1,2}_0(\Omega; \R^d)) } h.
%\end{equation}	
\end{Lemma}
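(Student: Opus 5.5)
We argue cell by cell at a fixed time and integrate over $(0,T)$ only at the end. The idea is to compare the discrete operators $\Dhuh$ and $\Gradd \vth$ with their continuous adjoints through face integrals, using the fact that $\vuh$ and $\vth$ are piecewise constant.

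\textbf{Proof of \eqref{cf1}.} Since $\bbT$ is symmetric, $\Dhuh:\bbT=\Gradh\vuh:\bbT$. On each $K$, the definition of $\Gradh$ gives
\[
\int_K \Gradh\vuh:\bbT\dx=\sum_{\sigma\in\facesK}\frac{1}{h}\avs{\vuh}_\sigma\cdot\Big(\int_K\bbT\dx\Big)\vn_\sigma .
\]
On the other hand, the divergence theorem gives
\[
\int_K\vuh\cdot\Div\bbT\dx=\sum_{\sigma\in\facesK}\int_\sigma \vuh^{\rm in}\cdot\bbT\vn\ds .
\]
We sum over $K$. On interior faces, $\sum_K\int_\sigma\vuh^{\rm in}\cdot\bbT\vn$ equals $-\int_\sigma\jump{\vuh}\cdot\bbT\vn$, which is the same as $\int_\sigma\avs{\vuh}\cdot\bbT\vn$ minus two boundary contributions. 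The cleanest route is to write $\vuh^{\rm in}=\avs{\vuh}-\frac12\jump{\vuh}$ on every face. On exterior faces, $\avs{\vuh}=0$ by \eqref{VFV_bound}. On interior faces, $\sum_{\sigma}\int_\sigma\avs{\vuh}\cdot\bbT\vn$ cancels between neighbours, because $\bbT$ has a trace and $\avs{\vuh}$ is single valued. We can therefore add this vanishing sum to the cell sum and obtain
\[
\intO{\vuh\cdot\Div\bbT+\Gradh\vuh:\bbT}=\sum_K\sum_{\sigma\in\facesK}\avs{\vuh}_\sigma\cdot\Big(\frac1h\int_K\bbT\dx-\int_\sigma\bbT\ds\Big)\vn_\sigma-\frac12\sum_{\sigma}\int_\sigma\jump{\vuh}\cdot[\bbT\vn]\ds .
\]
We then bound each term.

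\emph{First term.} Here $|\sigma|=h^{d-1}$ and $|K|=h^d$, so the bracket is $|\sigma|$ times the difference between the cell average and the face average of $\bbT$. That difference is $O(h^{1-d/2}h^{\,\cdot})\|\Grad\bbT\|_{L^2(K)}$ by a trace/Poincar\'e inequality; concretely,
\[
\Big|\frac1h\int_K\bbT-\int_\sigma\bbT\Big|\le C h^{d/2}\|\Grad\bbT\|_{L^2(K)} .
\]
Since $|\avs{\vuh}|\le\Ov u$, a naive bound only gives $O(1)$. Instead we use cancellation. For the pair of opposite faces $\sigma,\sigma'$ of $K$ with normals $\pm\ve_i$, the two face contributions combine into
\[
\Big(\frac1h\int_K\bbT-\frac12\Big(\int_\sigma\bbT+\int_{\sigma'}\bbT\Big)\Big)\big(\avs{\vuh}_\sigma-\avs{\vuh}_{\sigma'}\big)\ve_i
\]
plus a term $\frac12\big(\avs{\vuh}_\sigma+\avs{\vuh}_{\sigma'}\big)\big(\int_{\sigma'}\bbT-\int_\sigma\bbT\big)\ve_i$.

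This second piece, summed over $K$, telescopes. With $\vu^{\rm in}$ it is exactly $\int_K\vuh\cdot\partial_i\bbT\,\ve_i$, already accounted for. So we expect the reorganisation to leave only terms of the form $h^{d/2}\|\Grad\bbT\|_{L^2(K)}$ times differences $\avs{\vuh}_\sigma-\avs{\vuh}_{\sigma'}$, which are controlled by jumps of $\vuh$ across $\sigma,\sigma'$.

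\emph{Jump terms.} All remaining terms are of the form $\sum_\sigma\int_\sigma|\jump{\vuh}|\,|\bbT-\text{avg}|$ or $\sum_\sigma|\jump{\vuh}|\,h^{d/2}\|\Grad\bbT\|_{L^2}$. By Cauchy--Schwarz they are bounded by
\[
\Big(\sum_\sigma\int_\sigma h^\alpha|\jump{\vuh}|^2\Big)^{1/2}\Big(h^{-\alpha}\,h\,\|\Grad\bbT\|^2_{L^2}\Big)^{1/2}\le C\,h^{(1-\alpha)/2}\|\bbT\|_{W^{1,2}},
\]
using \eqref{ap3} after time integration. The delicate piece is the raw term $\frac12\int_\sigma\jump{\vuh}\cdot\bbT\vn$, because it has no gradient of $\bbT$. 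This piece must cancel against the $\int_\sigma\vuh^{\rm in}$ rewriting. I would handle it by choosing the splitting carefully, writing everything as $\int_\sigma\jump{\vuh}\cdot(\bbT-\Pi\bbT)\vn$, where $\Pi$ denotes the cell or face average. This is the main obstacle: arranging the algebra so that only averaged-minus-pointwise differences of $\bbT$ multiply the jumps. The natural checkpoint is that $\Gradh$ is exactly the adjoint of a face-averaged divergence.

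\textbf{Proof of \eqref{cf3}.} This is analogous but simpler, since $\Gradd\vth$ lives on the dual grid. We compute
\[
\intO{\Gradd\vth\cdot\Psi}=\sum_\sigma\jump{\vth}\,\frac1h\int_{D_\sigma}\Psi_i .
\]
Summation by parts gives $-\sum_K\vth|_K\sum_\sigma\frac1h\int_{D_\sigma}\Psi\cdot\vn$ plus boundary terms involving $\vth^{\rm out}$. Through $\avs{\vth}=\vt_{B,h}$ these become $2\vt_{B,h}-\vth^{\rm in}$ on exterior faces. We compare this with $-\intO{\vth\Div\Psi}$. The error is $\sum_\sigma\jump{\vth}\big(\frac1h\int_{D_\sigma}\Psi-\int_\sigma\Psi\big)$, of size $\le\|\Gradd\vth\|_{L^2}\,h\,\|\Grad\Psi\|_{L^2}$ by \eqref{ap1}. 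The $\vtB$ terms are handled identically with exact calculus, since $\vtB\in W^{2,\infty}$. The boundary discrepancy between $\vt_{B,h}$ and $\vtB$ is $O(h)$. There is also a half dual cell near the boundary, which contributes $\int_{\facesext}|\vth^{\rm in}-\vt_{B,h}|\,|\Psi|$; combining \eqref{ap5} with a trace inequality for $\Psi$ gives the order $h$.
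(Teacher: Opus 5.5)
Your plan for \eqref{cf1} stops exactly at the step that makes the estimate true, and you acknowledge this yourself when you call it ``the main obstacle''. After you remove the vanishing sum $\sum_K\sum_{\sigma\in\facesK}\int_\sigma\avs{\vuh}\cdot\bbT\vn\,\ds$, both pieces of your display are of order one. The raw term $-\frac12\sum_K\sum_{\sigma\in\facesK}\int_\sigma\jump{\vuh}\cdot\bbT\vn\,\ds$ equals $\intO{\vuh\cdot\Div\bbT}$ identically. The opposite-face pairing does not make the first sum small, because $\frac12(\avs{\vuh}_\sigma+\avs{\vuh}_{\sigma'})\cdot\int_K\partial_i\bbT\,\ve_i\dx$ summed over $K$ is an $O(1)$ quantity, not a telescoping one.

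The missing observation is $\sum_{\sigma\in\facesK}\vn_\sigma=0$. Write $\avs{\vuh}_\sigma=\vu_K+\frac12\jump{\vuh}_\sigma$ in your first sum. The $\vu_K$-part is then exactly $-\intO{\vuh\cdot\Div\bbT}$, and it cancels the raw term, leaving
\[
\begin{aligned}
&\intO{\vuh\cdot\Div\bbT+\Dhuh:\bbT}
=\frac12\sum_K\sum_{\sigma\in\facesK}\jump{\vuh}_\sigma\cdot\Big(\frac1h\int_K\bbT\dx-\int_\sigma\bbT\ds\Big)\vn_\sigma\\
&\qquad=-\intfacesint{\jump{\vuh}\cdot\big(\bbT-\avs{\PiQ\bbT}\big)\vn}+\intfacesext{\vuh^{\rm in}\cdot\big(\bbT-(\PiQ\bbT)^{\rm in}\big)\vn}.
\end{aligned}
\]
This is the paper's identity. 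Cauchy--Schwarz with \eqref{ap3} and the face estimate $\|\bbT-\PiQ\bbT\|_{L^2(\faces)}\aleq h^{1/2}\|\Grad\bbT\|_{L^2}$ then give $h^{(1-\alpha)/2}$ on interior faces. The exterior faces do not drop out as your sketch suggests: at boundary cells $\avs{\vuh}_\sigma-\avs{\vuh}_{\sigma'}$ is not a jump. There $\frac12\jump{\vuh}=-\vuh^{\rm in}$ is not covered by \eqref{ap3}, so the second integral has to be controlled separately through the boundary traces of $\vuh$.

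For \eqref{cf3}, your interior estimate coincides with the paper's, but the boundary bookkeeping loses half an order. You estimate the half-dual-cell contribution on its own by $\int_{\facesext}|\vth^{\rm in}-\vthB|\,|\Psi|\ds$ and use \eqref{ap5} with a trace inequality. That yields only $(Ch)^{1/2}\|\Psi\|_{L^2((0,T)\times\partial\Omega)}\aleq h^{1/2}$, not $h$.

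To reach $h$ you must combine this term with the boundary integral $\intfacesext{(\vth^{\rm in}-\vtB)\Psi\cdot\vn}$. That integral comes from the cellwise divergence theorem applied to $\intO{\vth\Div\Psi}$, together with $\intO{\vtB\Div\Psi+\Grad\vtB\cdot\Psi}=\int_{\partial\Omega}\vtB\Psi\cdot\vn\ds$. Since $\vthB$ is the face average of $\vtB$ and $\PiW\Psi$ is constant on each face, all boundary terms collapse to $\intfacesext{(\vtB-\vth^{\rm in})(\PiW\Psi-\Psi)\cdot\vn}$. The factor $\|\PiW\Psi-\Psi\|_{L^2(\facesext)}\aleq h^{1/2}\|\Psi\|_{W^{1,2}}$ then supplies the missing $h^{1/2}$.
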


For the proof we refer to Appendix \ref{ap_comp}. As a direct consequence of Corollary \ref{Cor1} we obtain the following result.

\begin{Corollary} \label{cor1}
Let $(\vrh ,\vuh, \vth)$ be a numerical solution obtained by the finite volume method \eqref{VFV} with $h \approx \TS$ and $\alpha \in  \left(-1,1\right)$.

Then there holds
\begin{equation} \label{cf2}
\left| 	\intTOB{ |\vuh|^2  \Div \Psi + \Gradh (|\vuh|^2) \cdot \Psi  } \right| \leq C(\Lambda)  \| \Psi \|_{L^2(0,T; W^{1,2}(\Omega; \R^d)) } h^{(1-\alpha)/2};
\end{equation}	
\begin{equation} \label{cf4}
\left|	\intTOB{ \Big(\vth^2-\vtB^2 \Big) \Div \Psi+ \Big( \Gradd(\vth^2) - \Grad (\vtB^2)\Big) \cdot \Psi  } \right| \leq C(\Lambda)  \| \Psi \|_{L^2(0,T; W^{1,2}(\Omega; \R^d)) } h.
\end{equation}	
\end{Corollary}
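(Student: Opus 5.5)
The plan is to obtain \eqref{cf2} and \eqref{cf4} by applying the compatibility estimates \eqref{cf1} and \eqref{cf3} of Lemma~\ref{lem_CI} to composed quantities. Corollary~\ref{Cor1} guarantees that these composed quantities still satisfy the hypotheses \eqref{HP}, \eqref{ap1}, \eqref{ap3} (and \eqref{ap5}).

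For \eqref{cf4}, set $G(z)=z^2$. This is locally Lipschitz, and it is globally Lipschitz on the range $[\underline{\vt}, \Ov{\vt}]$ guaranteed by \eqref{HP}. The boundary convention of Corollary~\ref{Cor1} gives $\avs{G(\vth)}|_\sigma=\frac12(G(\vth^{\rm in})+G(2\vt_{B,h}-\vth^{\rm in}))$. Two consequences follow:
\begin{itemize}
\item $\avs{\vth^2}$ differs from $\vt_{B,h}^2$ only by $(\vth^{\rm in}-\vt_{B,h})^2$.
\item $\Gradd(\vth^2)$ on exterior dual cells is defined consistently with that convention.
\end{itemize}
I would then repeat the argument behind \eqref{cf3} verbatim with $\vth$ replaced by $\vth^2$ and $\vtB$ by $\vtB^2$. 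Since $\vtB\in W^{2,\infty}$, the same holds for $\vtB^2$. The proof of \eqref{cf3} used only three inputs: the $L^2$ bound on $\Gradd\vth$, the boundary estimate \eqref{ap5}, and the interpolation error of $\PiF^{(d)}$ applied to $\vtB$. Their counterparts for $\vth^2$ are \eqref{Cap1} for $\Gradd(\vth^2)$, and \eqref{ap5} for $|(\vth^{\rm in})^2-\vt_{B,h}^2| \le 2\Ov{\vt}|\vth^{\rm in}-\vt_{B,h}|$. The extra quadratic term $(\vth^{\rm in}-\vt_{B,h})^2$ on exterior faces contributes at most $\int_0^T\intfacesext{|\vth^{\rm in}-\vt_{B,h}|^2}\,\|\Psi\|_{L^\infty(\partial\Omega)}$. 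To bound this by $h\|\Psi\|_{L^2W^{1,2}}$ I would avoid the $L^\infty$ trace and instead pair it with the face projection of $\Psi$ via a discrete trace inequality. This is the point I expect to require the most care in getting exactly order $h$.

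For \eqref{cf2}, the convention $\avs{G(\vuh)}=\frac12(G(\vuh^{\rm in})+G(-\vuh^{\rm in}))$ with the even function $G(z)=z^2$ means that $|\vuh|^2$ does \emph{not} vanish in average on $\facesext$. This is why only a full-space $W^{1,2}$ test function $\Psi$ appears and boundary terms must be handled. The route is the same as for \eqref{cf1}: write $\intO{\Gradh(|\vuh|^2)\cdot\Psi}$ face by face as $\sum_\sigma\int_\sigma \avs{|\vuh|^2}\,\PiQ$-type differences of $\Psi$. Then compare with $-\intO{|\vuh|^2\Div\Psi}$, which is the face sum of $\jump{|\vuh|^2}$ against $\PiF\Psi$. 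The difference is controlled by
\[
\int_0^T\sum_\sigma \int_\sigma |\jump{|\vuh|^2}|\,|\Psi-\PiQ\Psi| \le C\Big(\int_0^T\intfacesint{h^\alpha|\jump{|\vuh|^2}|^2}\Big)^{1/2} h^{-\alpha/2}\, h^{1/2}\|\Psi\|_{L^2W^{1,2}},
\]
using \eqref{Cap3} for $G(z)=z^2$, which is Lipschitz on $[-\Ov{u},\Ov{u}]$. This yields $h^{(1-\alpha)/2}$. The exterior faces are treated the same way with the symmetric boundary convention, using $|\vuh^{\rm in}|^2=|\vuh^{\rm in}-\avs{\vuh}|^2$ and the jump bound \eqref{ap3}.

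The main obstacle is the boundary contributions in both estimates, since the composed functions no longer satisfy homogeneous boundary averages. I would first try to absorb them using the stability bounds \eqref{ap3}, \eqref{ap5} together with a trace estimate $\|\Psi\|_{L^2(\partial\Omega)}\le C\|\Psi\|_{W^{1,2}}$. If that fails to give the optimal power of $h$, I would fall back on the cruder but sufficient rate, which is all that is needed later.
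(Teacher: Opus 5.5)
\emph{Verdict: there is a genuine gap, at the exterior faces.}

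\textbf{Where your proposal fails.} Your overall route matches the paper's. You rerun the proof of Lemma~\ref{lem_CI} for the squares $|\vuh|^2$ and $\vth^2$, with \eqref{Cap1}, \eqref{Cap3} of Corollary~\ref{Cor1} replacing \eqref{ap1}, \eqref{ap3}. The paper does exactly this: it states Corollary~\ref{cor1} as a direct consequence of Corollary~\ref{Cor1} and says nothing more. Your interior-face estimates are correct. The exterior faces are the problem: you flag them but do not close them, and the step you propose for \eqref{cf2} there fails.

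With the ghost convention of Corollary~\ref{Cor1}, $\avs{|\vuh|^2}=|\vuh^{\rm in}|^2$, i.e.\ $\jump{|\vuh|^2}=0$ on $\facesext$. The exact discrete Green formula then reads
\[
\intO{\Big(|\vuh|^2\Div\Psi+\Gradh(|\vuh|^2)\cdot\Psi\Big)}=-\intfacesint{\jump{|\vuh|^2}\big(\Psi-\avs{\PiQ\Psi}\big)\cdot\vn}+\intfacesext{|\vuh^{\rm in}|^2\,\Psi\cdot\vn}.
\]
The boundary term carries no interpolation error of $\Psi$, so it cannot be handled like the interior faces. Moreover, \eqref{ap3} says nothing about it: \eqref{ap3} sums over $\facesint$ only, and $\vuh^{\rm in}-\avs{\vuh}=-\frac12\jump{\vuh}$ on $\facesext$ is not among the controlled jumps. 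On a boundary cell $K$ with interior neighbour $S$, the normal component of $\Gradh\vuh$ sees only $\vuh|_K+\vuh|_S$. So the direct bound is no better than $\int_0^T\intfacesext{|\vuh^{\rm in}|^2}\dt\aleq h+h^{-\alpha}$.

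For \eqref{cf4} your identification of the quadratic remainder is right. Set $a=\vth^{\rm in}$, $b=\vt_{B,h}$; then $\frac12\jump{\vth^2}=2b(b-a)$ on $\facesext$, and the exterior contributions are
\[
\intfacesext{2b(a-b)(\Psi-\PiW\Psi)\cdot\vn}+\intfacesext{(b^2-\vtB^2)\,\Psi\cdot\vn}+\intfacesext{(a-b)^2\,\Psi\cdot\vn}.
\]
After time integration the first two are $O(h)\|\Psi\|_{L^2(0,T;W^{1,2})}$. For the third, \eqref{HP} and \eqref{ap5} give only $\|(a-b)^2\|_{L^2((0,T)\times\partial\Omega)}\aleq\|a-b\|_{L^2((0,T)\times\partial\Omega)}\aleq h^{1/2}$. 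Pairing with any trace or face projection of $\Psi$ therefore yields $h^{1/2}\|\Psi\|_{L^2(0,T;W^{1,2})}$; order $h$ is available only against $\|\Psi\|_{L^\infty}$. A discrete trace inequality does not change this, and falling back on a cruder rate does not prove \eqref{cf2}, \eqref{cf4} as stated.

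\textbf{What is missing.} Both offending terms are pure boundary integrals of $\Psi$, and they disappear for test functions with zero trace. That is the setting in which the estimates are used: in Proposition~\ref{col-div-curl}, $D^2_n$ only has to tend to zero in $W^{-1,2}$. For $\Psi\in L^2(0,T;W^{1,2}_0(\Omega;\R^d))$:
\begin{enumerate}
\item In \eqref{cf2} the boundary term vanishes identically, so your interior bound with \eqref{Cap3} gives exactly $h^{(1-\alpha)/2}$.
\item In \eqref{cf4} the $\vtB^2$ terms drop out. Because $\frac12\jump{\vth^2}$ is linear in the boundary defect, the exterior contribution reduces to $2\int_0^T\intfacesext{\vt_{B,h}(\vt_{B,h}-\vth^{\rm in})(\PiW\Psi-\Psi)\cdot\vn}\dt$. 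This is $\aleq h\|\Psi\|_{L^2(0,T;W^{1,2})}$ by \eqref{ap5} and $\int_{\facesext}|\PiW\Psi-\Psi|^2\ds\aleq h\|\Grad\Psi\|^2_{L^2(\Omega)}$.
\end{enumerate}

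The paper's one-line justification does not treat these exterior terms for general $\Psi\in L^2(0,T;W^{1,2})$ either. The proof of Lemma~\ref{lem_CI} uses $\avs{\vuh}=0$ and $\avs{\vth}=\vt_{B,h}$ on $\facesext$, and neither identity survives squaring.
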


\begin{Remark} \label{RrR} We point out that the test functions in \eqref{cf1}, \eqref{cf3} need not be compactly supported in 
	$\Omega$. This fact plays a crucial role in satisfying the boundary conditions in the convergence proof, cf. Section 
\ref{wco}. 
\end{Remark}

\section{Main result - convergence of the numerical scheme}\label{sec:convergence}

In this section, we state our main result. The numerical solutions converge (up to a subsequence)
to a weak solution of the Navier--Stokes--Fourier system on $(0,T) \times \Omega$ as soon as the discretization parameters $h \approx \Delta t \to 0$.
Note carefully that the convergence is only conditional (up to a suitable subsequence) as the weak solutions of the Navier--Stokes--Fourier system 
are not (known to be) uniquely determined by the initial/boundary data. 

\begin{Theorem}[{\bf Convergence}]\label{thm_main}
Let the initial data $(\vr_0, \vu_0, \vt_0)$ be measurable functions in $\Omega$ satisfying  
\begin{equation} \label{idata}
0 < \underline{\vr} \leq \vr_0 \leq \Ov{\vr},\ 0 < \underline{\vt} \leq \vt_0 \leq \Ov{\vt},\ |\vu_0| \leq \Ov{u} \ \mbox{a.a. in}\ \Omega. 
\end{equation}
Let the boundary temperature $\vtB$ be a restriction of a positive function $\vtB \in W^{2, \infty}((0,T) \times \R^d)$. 
We set discrete initial data as 
\begin{equation}\label{init-pro}
		\vrh^0 = \Pi_Q \vr_0,\ \vt^0_h = \Pi_Q \vt_0,\ \vu^0_h = \Pi_Q \vu_0.
\end{equation}
Let $\{\vrh ,\vuh, \vth\}_{h \searrow 0}$ be the associated sequence of numerical solutions obtained by the FV method \eqref{VFV}
on the time interval $(0,T)$ with $\alpha \in  (-1,1)$ and $h \approx \TS$. In addition, suppose 
\begin{align}
	0 < \liminf_{h > 0} \left( \inf_{t \in [0,T] \times \Omega} \vr_h  \right) &\leq \sup_{h > 0} \left( \sup_{t \in [0,T] \times \Omega} \vr_h \right) < \infty, \br 
	0 < \liminf_{h > 0} \left( \inf_{t \in [0,T] \times \Omega} \vt_h  \right) &\leq \sup_{h > 0} \left( \sup_{t \in [0,T] \times \Omega} \vt_h \right) < \infty, \br
	\sup_{h > 0} \left( \sup_{[0,T] \times \Omega} |\vu_h| \right) &< \infty.
\label{HP1bis}
\end{align}

Then there exists a subsequence (not relabeled) such that
\begin{align}\label{strong-conv}
\vrh \to \vr,\ \vth \to \vt,\ \vuh \to \vu \ \ \mbox{ a.a. in }\ (0,T) \times \Omega \ \mbox{ for } h \to 0,
\end{align}
where $(\vr,\vu,\vt)$ is a weak solution of the Navier--Stokes--Fourier system with the initial data $(\vr_0, \vu_0, \vt_0)$ and the boundary temperature $\vtB$ in the sense of Definition~\ref{Dw1}.
\end{Theorem}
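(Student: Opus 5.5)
The proof follows the Lions--Feireisl strategy for compressible flows, transferred to the discrete setting.

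\emph{Step 1: weak limits and passage in linear terms.} By \eqref{HP1bis}, Lemma~\ref{lm_ub} and Corollary~\ref{Cor1}, we extract a subsequence with $\vrh \to \vr$, $\vth\to\vt$, $\vuh\to\vu$ weakly-(*) in $L^\infty$, together with $\Gradh \vuh \rightharpoonup \Grad \vu$ and $\Gradd \vth \rightharpoonup \Grad\vt$ weakly in $L^2$. The identification of these limit gradients, and the boundary conditions $\vu\in L^2(0,T;W^{1,2}_0)$ and $\vt-\vtB\in L^2(0,T;W^{1,2}_0)$, follow from the compatibility Lemma~\ref{lem_CI}. Since its test functions need not vanish on $\partial\Omega$ (Remark~\ref{RrR}), letting $h\to0$ in \eqref{cf1}, \eqref{cf3} yields the distributional identities including boundary traces. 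The time-derivative bounds \eqref{derivr}, \eqref{derivm}, \eqref{derivt} give \eqref{contr}, \eqref{contm}, \eqref{contt}. Since $\|\tvr_h-\vrh\|_{L^1}\lesssim \TS\|D_t\vrh\|_{L^1}\to0$ by \eqref{ap2}, the weak limits of the piecewise constant and piecewise linear interpolants coincide.

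\emph{Step 2: strong convergence of $\vth$ and $\vuh$ and identification of convective terms.} We combine \eqref{contt} (compactness in time) with the $L^2$ bound on $\Gradd \vth$. A discrete Aubin--Lions/div-curl argument then gives $\vrh\vth\,\vth \rightharpoonup \vr\vt\,\vt$, and similarly $\vrh\vuh\cdot\vuh \rightharpoonup \vr|\vu|^2$ using \eqref{contm} and \eqref{ap1}. Concretely, we test $\widetilde{\vrh\vth}$ against $\vth$ and use the duality $W^{-1,2}$/$W^{1,2}$ after replacing $\vth$ by a smooth function via \eqref{cf3}, \eqref{cf4}. Together with $\vrh\ge\Un\vr$, this shows that $\sqrt{\vrh}\vth$ and $\sqrt{\vrh}\vuh$ converge strongly in $L^2$. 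Once $\vrh$ converges strongly (Step 3), this gives strong convergence of $\vth$ and $\vuh$. Before Step 3 we already have $\vrh\vuh\rightharpoonup \vr\vu$ and $\vrh\vuh\otimes\vuh\rightharpoonup\vr\vu\otimes\vu$, by the compensated compactness of $\widetilde{\vrh\vuh}$ against $\vuh$; the same argument applied to $\vrh$ and $\vuh$ gives $\vrh\vuh \rightharpoonup\vr\vu$. Hence the limit continuity and momentum equations hold with the pressure $p=\Ov{\vr\vt}$ in weak-limit form, via Lemmas~\ref{Lcons1}, \ref{Lcons2}.

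\emph{Step 3 (main obstacle): strong convergence of the density.} We introduce the effective viscous flux and the discrete Lions identity
\[
\Ov{\vr\vt\,\vr} - (2\mu+\lambda)\Ov{\vr\Div\vu} = \Ov{\vr\vt}\,\vr - (2\mu+\lambda)\vr\Div\vu.
\]
To prove it we test the discrete momentum equation with $\bfphi_h$ equal to a discrete inverse divergence $\Gradd\Laph^{-1}$ (localized by a cut-off) of $\vrh$. We also test the limit equation with $\Grad\Delta^{-1}\vr$ and compare the two, using the div-curl lemma for the convective terms. The hard part is that $\Gradd\vuh$ is unbounded by \eqref{ap4}, so the commutators between upwinding, the operators $\Gradh$, $\Divmesh$, and the averaging cannot be handled as in \cite{Karper}. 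We would exploit the exact algebraic identity $\Divh=\mathrm{tr}\,\bD_h$ and the structure $\Laph=\Divmesh\Gradd$ to cancel the symmetric-gradient terms exactly. The remaining commutators we would bound by the numerical dissipation \eqref{ap3}, which yields the rates $h^{(1\pm\alpha)/2}$ with $\alpha\in(-1,1)$.

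Next, renormalizing the discrete continuity equation with $b(\vr)=\vr\log\vr$, the upwind flux produces a nonnegative dissipation. Passing to the limit gives $\partial_t\Ov{\vr\log\vr}+\Div(\Ov{\vr\log\vr}\vu)+\Ov{\vr\Div\vu}\le0$. Since the limit is a bounded solution of the continuity equation with $\vu\in L^2W^{1,2}$, DiPerna--Lions gives the renormalized equation for $\vr\log\vr$. Subtracting and inserting the Lions identity, we use the monotonicity-type inequality $\Ov{\vr\vt\vr}\ge\Ov{\vr\vt}\vr$. This inequality in turn requires the compensated compactness of $\vrh\vth$ from Step 2, which is the delicate coupling point. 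From these we conclude $\Ov{\vr\log\vr}=\vr\log\vr$, hence $\vrh\to\vr$ a.e., since the initial data converge strongly.

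\emph{Step 4: limits in the inequalities.} With all variables converging a.e., the renormalized equation \eqref{w4} follows. The entropy inequality \eqref{w7} and the ballistic energy inequality \eqref{w8} follow from Lemmas~\ref{Lcons3}, \ref{Lconsbe}, using weak lower semicontinuity of the convex dissipation terms $\frac{\Theta}{\vth}\bS_h:\Gradh\vuh$ and $\frac{\chi_h}{\vth^2}|\Gradd\vth|^2$, together with $\chi_h\to1$ and the strong convergence of $\vth$.
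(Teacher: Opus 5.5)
\textbf{Gap: strong convergence of $\vuh$ is postponed past the step that needs it.}

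You state that strong convergence of $\vth$ and $\vuh$ follows only once $\vrh$ converges. So when you prove the Lions identity in Step 3, you have only weak convergence of $\vuh$. After testing with the inverse divergence of $\phi\vrh$, the convective terms reduce to $\int_0^T\!\!\int_\Omega \vuh\cdot\mathcal{H}_h\dxdt$, with the commutator $\mathcal{H}_h=\phi\vrh\Grad\mathcal{A}^{\rm div}[\psi\vrh\vuh]-\Grad\mathcal{A}^{\nabla}[\phi\vrh]\,\psi\vrh\vuh$. The commutator Lemma~\ref{lem-divcurl-1}, applied for each fixed $t$ to the time-continuous interpolants, gives only weak convergence of $\mathcal{H}_h$. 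In the continuous theory one closes by pairing this commutator, compact in $L^2(0,T;W^{-1,2})$, with $\vu_n$ bounded in $L^2(0,T;W^{1,2})$. Here $\vuh$ is piecewise constant and $\Gradd\vuh$ blows up by \eqref{ap4}, as you note yourself, so that route is closed. ``Div--curl for the convective terms'' does not fill this. The paper passes to the limit in this term precisely by using the strong convergence \eqref{con-u} of $\vuh$, established \emph{before} the density step.

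\textbf{The fix is already in your Step 2.} Your claim that $\sqrt{\vrh}\vth$ converges strongly is not available at that stage; it is equivalent to strong convergence of $\vrh$. What does hold is
\[
\int_0^T\!\!\int_\Omega \vrh|\vth-\vt|^2\dxdt=\int_0^T\!\!\int_\Omega\big(\vrh\vth^2-2\vrh\vth\,\vt+\vrh\vt^2\big)\dxdt\to0 .
\]
This uses $\vrh\vth^2\rightharpoonup\vr\vt^2$, $\vrh\vth\rightharpoonup\vr\vt$ (div--curl for $\tvr_h$ and $\vth$, using \eqref{cf3}), and $\vrh\rightharpoonup\vr$. Since $\vrh\ge\underline{\vr}$, this gives $\vth\to\vt$ in $L^2$, and the same computation gives $\vuh\to\vu$. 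The paper does the equivalent with the limit density as weight: it applies div--curl to $\vr-\tvr_h$ against $|\vuh|^2$ (resp.\ $\vth^2$) to obtain $\vr|\vuh|^2\rightharpoonup\vr|\vu|^2$. Once this is moved before Step 3, your outline closes.

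\textbf{Smaller points.} Your sign inequality $\Ov{\vr^2\vt}\ge\Ov{\vr\vt}\,\vr$ does follow from $\Ov{\vr\vt}=\vr\vt$ by passing to the limit in $\vth(\vrh-\vr)^2\ge0$, but you should say so. In the paper it is immediate, because strong convergence of $\vth$ turns it into $\vt(\Ov{\vr^2}-\vr^2)\ge0$.

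Your test function $\Gradd\Laph^{-1}$ applied to $\vrh$ lives on the dual grids, not in $Q_h^d$ where \eqref{VFV_M} is tested. It would therefore require a discrete commutator lemma. The paper instead tests with $\Pi_Q[\psi\mathcal{A}^{\nabla}[\phi\vrh]]$, which keeps $\Div\mathcal{A}^{\nabla}=\mathrm{id}$, $\Curl\mathcal{A}^{\nabla}=0$ and the continuous commutator lemma available. It then controls the discretization errors through the reconstruction $\PiL\vrh$, the jump bound \eqref{ap3} and the discrete identity of Lemma~\ref{lem-lap}.
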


\begin{Remark} \label{RR1}
%Note that \eqref{idata} and \eqref{init-pro} implies
%\begin{equation}\label{init-conv}
%\vrh^0 \to \vr_0 \ \mbox{in}\ L^1(\Omega),\
%\vth^0 \to \vt_0 \ \mbox{in}\ L^1(\Omega),\
%\vrh^0 \vuh^0 \to \vr_0 \vu_0 \ \mbox{in}\ L^1(\Omega;\R^d).
%\end{equation}

Modifying the values of the constants in \eqref{idata} as the case may be, we may write the hypothesis 
\eqref{HP1bis} in the same form, namely  
\begin{equation} \label{HP1}
0 < \underline{\vr} \leq \vr_h \leq \Ov{\vr},\ 0 < \underline{\vt} \leq  \vt_h  \leq \Ov{\vt},\ |\vu_h| \leq \Ov{u} \ \mbox{in}\ [0,T] \times \Omega
\end{equation}	
uniformly for $h \to 0$. As a matter of fact, it is \eqref{HP1} rather than \eqref{HP1bis} that is systematically used in the text, notably in the definition 
of the parameter $\Lambda$ in \eqref{lambda}.
	\end{Remark}

The proof of Theorem \ref{thm_main} is rather involved and will be carried out in several steps. The most difficult part is to establish the strong convergence of the approximate densities - a result of independent interest obtained in Section \ref{app:con-d}.

\subsection{Weak convergence}
\label{wco}

It follows from the stability estimates in Lemma~\ref{lm_ub}, combined with the compatibility formulae \eqref{cf1}, \eqref{cf3},   
that
\begin{align}\label{weak-con-1}
&(\vr_{h} ,\vu_{h}, \vt_{h})  \to (\vr ,\vu, \vt) \ \mbox{ weakly-(*) in } L^\infty((0,T) \times \Omega; \R^{d+2}) ,
\\
\label{weak-con-3}
&\Gradh \vu_{h}  \to \Grad \vu \ \mbox{ weakly in } L^2( (0,T) \times \Omega; \R^{d\times d}), 
\\\label{weak-con-4}
&\Gradd \vt_{h}  \to \Grad \vt \ \mbox{ weakly in } L^2( (0,T) \times \Omega; \R^{d}), 
\end{align}
at least for a suitable subsequence. Moreover, it follows from \eqref{HP1} that
\begin{align} 
0 < \underline{\vr} \leq \vr \leq \Ov{\vr} \ &\mbox{ a.a. in }\ (0,T) \times \Omega	, \br
\vu \in L^2(0,T; W^{1,2}(\Omega; \R^d)),\ | \vu | \leq \Ov{u} \ &\mbox{ a.a. in }\ (0,T) \times \Omega, \br 
\vt \in L^2(0,T; W^{1,2}(\Omega)),\ 0 < \underline{\vt} \leq \vt \leq \Ov{\vt} \ &\mbox{ a.a. in }\ (0,T) \times \Omega.
	\label{regul}
\end{align}	 

In order to see that the limits satisfy the relevant boundary conditions 
\eqref{i8} and \eqref{i9}, we use the compatibility of the discrete differential operators stated in 
\eqref{cf1} and \eqref{cf2}, respectively. 

As \eqref{cf1} is satisfied for \emph{any} $\mathbb{T}$, say continuously differentiable, we first let $h \to 0$ and use 
\eqref{weak-con-4} to deduce 
\[
\int_0^T \intO{ \Big( \vu : \Div \mathbb{T} + \Ds \vu : \mathbb{T} \Big) } \dt =0,  
\]
which, after integration-by-parts, yields 
\[
\int_0^T \int_{\partial \Omega} \vu \cdot [\mathbb{T} \cdot \vc{n}] \D S_x \dt = 0 \ \mbox{for any continuous symmetric tensor}\ \mathbb{T} = \mathbb{T}(t,x).
\]
The same argument applied to \eqref{cf2} yields 
\[
\int_0^T \int_{\partial \Omega} (\vt - \vtB) \Psi \cdot \vc{n} \D S_x \dt = 0 \ \mbox{for any continuous vector field}\ \Psi = \Psi (t,x).
\]	
Thus, we conclude
\begin{equation} \label{bcu} 
	\vu|_{\partial \Omega} = 0 \ \mbox{in the sense of traces, } 
\end{equation}	
and 
\begin{equation} \label{bct} 
	\vt|_{\partial \Omega} = \vtB \ \mbox{in the sense of traces.}
\end{equation}	

Finally,  we deduce
\begin{align*}
\abs{(\tvr_h, \tvu_h, \tvt_h) - (\vrh, \vuh, \vth) } \leq \TS \abs{D_t (\vrh, \vuh, \vth) };  
\end{align*}
{whence, thanks to the estimates \eqref{ap2} and the uniform boundedness of the numerical solutions, we get}
\begin{equation} \label{prox1}
\| (\tvr_h, \tvu_h, \tvt_h) - (\vrh, \vuh, \vth) \|_{L^q((0,T) \times \Omega; \R^{d+2})} \to 0 \ \mbox{ as }\ h \to 0
\end{equation} 
for any $1 \leq q < \infty$. In particular, 
\begin{equation} \label{weak-con-2}
(\tvr_h, \tvu_h, \tvt_h)  \to \ (\vr, \vu, \vt) \mbox{ weakly-(*) in }  L^\infty ((0,T)\times \Omega; \R^{d+2}).
\end{equation}

To complete the proof of Theorem \ref{thm_main}, 
it remains to show the weak convergences \eqref{weak-con-1} and \eqref{weak-con-2} are, in fact, strong, and the limit $(\vr, \vu, \vt)$ is a weak solution of the Navier-Stokes-Fourier system in $(0,T) \times \Omega$.

\subsection{Strong convergence of the approximate velocities}

We start with a simple argument based on the Div-Curl Lemma applied systematically in the section. 
In accordance with the consistency formulation of the equation of continuity, we have 
\begin{equation} \label{divcurl-d}
	\pd_t \widetilde{\vrh} + \Div (\vrh \vuh)  \in_b L^2(0,T; W^{-1,2}(\Omega)) 
	\ \mbox{as}\ h \to 0.
\end{equation}
Moreover, in accordance with the convergence \eqref{weak-con-1}, \eqref{weak-con-3}, 
we can write 
\[
\Grad \vu_h = \nabla_h \vu_h + ( \Grad \vu_h - \nabla_h \vu_h) 
\]
where  $\nabla_h \vu_h \in_b L^2((0,T)\times\Omega; \R^{d\times d})$. In addition,  it follows from the compatibility estimate \eqref{cf1} that
\[
( \Grad \vu_h - \nabla_h \vu_h) \to 0 \ \mbox{in}\ L^2(0,T; W^{-1,2}(\Omega; \R^{d \times d})). 
\]
Thus, we may apply a ``compensated compactness'' argument, specifically  
Proposition~\ref{col-div-curl} with $r_h = \widetilde{\vrh}, \, v_n = \vuh^{(j)}$ to conclude
\begin{align}\label{68}
\widetilde{\vrh} \vuh  \to \vr \vu  \ \mbox{ weakly-(*) in } L^\infty((0,T)\times \Omega; \R^d).
\end{align}
In addition, combining \eqref{68} with the bounds
\begin{align*}
\abs{\widetilde{\vrh} \vuh - \vrh \vuh} \aleq \TS \abs{D_t  \vrh}, \quad 
\norm{D_t  \vrh}_{L^2((0,T)\times \Omega)} \aleq \TS^{-1/2},\\
\abs{\widetilde{\vrh \vuh} - \vrh \vuh} \aleq \TS (\abs{D_t  \vrh}+\abs{D_t  \vuh}), \quad 
\norm{D_t  \vuh}_{L^2((0,T)\times \Omega;\R^{d})} \aleq \TS^{-1/2}, 
%\cmag \abs{\widetilde{\vrh} \vuh - \widetilde{\vrh \vuh}} \aleq \TS \abs{D_t  \vuh}, \quad 
%\norm{D_t  \vuh}_{L^2((0,T)\times \Omega;\R^{d})} \aleq \TS^{-1/2},
\end{align*}
we obtain
\begin{equation} \label{prox2}	
\left\| \widetilde{\vrh \vuh} - \vrh \vuh \right\|_{L^q((0,T) \times \Omega; \R^d)} \to 0 \ \mbox{as}\ h \to 0 \ \mbox{for any}\ 
1 \leq q < \infty,
\end{equation}
together with
\begin{align}\label{con-m}
\vrh \vuh  \to \vr \vu  \ \mbox{ weakly-(*) in } L^\infty ((0,T)\times \Omega; \R^d), \\ \label{con-m-1}
\widetilde{\vrh \vuh}  \to \vr \vu  \ \mbox{ weakly-(*) in } L^\infty((0,T)\times \Omega; \R^d).
\end{align}

Letting $h \to 0$ in the consistency formulation  \eqref{cons-1} of the equation of continuity, we conclude that the limits $\vr, \vu$ satisfy the weak formulation of the equation of continuity \eqref{w3}.  
Moreover, as $\vr$ and $\vu$ are bounded  and 
$\Grad \vu \in L^2((0,T) \times \Omega; \R^{d \times d})$, we can apply the nowadays standard DiPerna-Lions theory~\cite{DiPena-Lions} to deduce the renormalized formulation \eqref{w4}.

\medskip
Next, using the consistency formulation of the momentum equation stated in Lemma \ref{Lcons2}, we get  
\begin{align*}
& \pd_t \widetilde{\vrh \vuh} + \Div (\vrh \vuh \otimes \vuh - \bS_h + p_h \bI)  = \vrh \vc{g} + h_{\vm}, \\
& \mbox{ with } \vrh \vc{g} \mbox{ bounded in } L^\infty((0,T)\times\Omega;\R^d), \quad
h_{\vm} \in_b {L^2(0,T;W^{-1,2}(\Omega;\R^d))}.
\end{align*}
Consequently, following the same argument as in the proof of \eqref{con-m}, 
we apply Proposition~\ref{col-div-curl} with $r_h = \widetilde{\vrh \vuh^{(i)}}, \, v_h = \vuh^{(j)}$ to obtain 
\begin{align}
&\widetilde{\vrh \vuh} \cdot \vuh \to \vr \abs{\vu}^2  \ \mbox{ weakly-(*) in } L^\infty((0,T)\times \Omega),\\ \label{con-mmd-tilde}
& \widetilde{\vrh \vuh} \otimes \vuh \to \vr \vu \otimes \vu  \ \mbox{ weakly-(*) in } L^\infty((0,T)\times \Omega; \R^{d\times d}), 
\end{align}
which yields
\begin{align}\label{con-mmd}
&\vrh \vuh \cdot \vuh \to \vr \abs{\vu}^2  \ \mbox{ weakly-(*) in } L^\infty((0,T)\times \Omega),\\ 
& \vrh \vuh \otimes \vuh \to \vr \vu \otimes \vu  \ \mbox{ weakly-(*) in } L^\infty((0,T)\times \Omega; \R^{d\times d}).
\end{align}

%Furthermore, we know from Lemma \ref{lem_CI} that $\Gradh (\abs{\vuh}^2)$ satisfies the compatibility equation \eqref{cf2} and belongs to the regularity class  $L^p((0,T)\times \Omega; \R^d)$ with $p \in \left[1, \frac{3}{3+\alpha} \right], \alpha \in (-1,0]$.
Furthermore, we know from Corollary~\ref{cor1} that $\Gradh (\abs{\vuh}^2)$ satisfies the compatibility estimates \eqref{cf2}.
Moreover, Corollary \ref{Cor1} for $G(v) = v^2$ yields
$\Gradh (|\vu_h|^2 ) \in_b $ $L^2((0,T)\times \Omega; \R^d)$.
Hence, we apply Proposition~\ref{col-div-curl} once more, this time with $r_n = \vr - \widetilde{\vrh}, \, v_n = \abs{\vuh}^2$,  obtaining
\begin{align}
(\vr - \widetilde{\vrh}) \abs{\vuh}^2 \to 0 \mbox{ weakly-(*) in } \ L^\infty((0,T)\times\Omega),\\ \label{con-mmd-1}
(\vr - \vrh) \abs{\vuh}^2 \to 0 \mbox{ weakly-(*) in } \ L^\infty((0,T)\times\Omega). 
\end{align}

Finally, we infer from \eqref{con-mmd} and \eqref{con-mmd-1} that 
\begin{align*}
\vr  \abs{\vuh}^2 \to \vr  \abs{\vu}^2 \mbox{ weakly-(*) in } \ L^\infty((0,T)\times\Omega). 
\end{align*}
As $\vr$ is bounded and strictly positive, and $\vuh$ converges weakly to $\vu$, we establish the strong convergence of the velocity in the Hilbert space $L^2((0,T) \times \Omega; \R^d)$. This, 
combined with the uniform bounds \eqref{HP1}, yields the final conclusion
\begin{align}\label{con-u}
\vuh \to \vu \mbox{ in } \ L^q((0,T)\times\Omega; \R^d) \ \mbox{for any}\ 1 \leq q  < \infty, 
\end{align}
or, up to a subsequence, 
\begin{align}\label{con-u1}
	\vuh \to \vu \ \mbox{a.a. in} \ (0,T)\times\Omega. 
\end{align}

\subsection{Strong convergence of the approximate temperatures}

We follow the arguments of the preceding part. First, 
thanks to \eqref{weak-con-1}, \eqref{divcurl-d}, \eqref{ap1} and \eqref{cf3}, we can apply Proposition~\ref{col-div-curl}  with $r_n = \widetilde{\vrh}$ and $v_n = \vth$ to conclude 
\begin{align*}
\widetilde{\vrh} \vth  \to \vr \vt \ \mbox{ weakly-(*) in } L^\infty((0,T)\times \Omega).	
\end{align*}
Since
\begin{align*}
\abs{\widetilde{\vrh} \vth - \vrh \vth} \aleq \TS \abs{D_t  \vrh}, \quad 
\norm{D_t  \vrh}_{L^2((0,T)\times \Omega)} \aleq \TS^{-1/2},
\end{align*}
we get
\begin{align}\label{con-p}
\vrh \vth  \to \vr \vt \ \mbox{ weakly-(*) in } L^\infty((0,T)\times \Omega).	
\end{align}
Passing to the limit $h\to 0$ in the consistency equation of momentum \eqref{cons-2}, we obtain from \eqref{con-m}, \eqref{con-mmd}, and \eqref{con-p} that the limits $\vr, \vu, \vt$ satisfy the momentum equation \eqref{w5}. Note carefully that the pressure is a \emph{linear} function of the density.

\medskip
Next, recalling the consistency formulation of the internal energy \eqref{cons3}, we have 
\begin{align*}
&  \pd_t \widetilde{\vrh \vth} + \Div (c_v \vrh \vth  \vuh - \kappa  \Gradd \vth)  = (\bS_h-p_h \I)  : \Gradh \vuh  + h_{\vt},
\end{align*}
where, thanks to the consistency estimates stated in Lemma \ref{Lcons3}, 
\begin{align*}
(\bS_h-p_h \I)  : \Gradh \vuh  \in_b L^1((0,T)\times\Omega),\quad
h_\vt \in_b L^2(0,T; W^{-1,2}(\Omega)).
\end{align*}
Applying Proposition~\ref{col-div-curl}  with $r_n = \widetilde{\vrh \vth}$ and $v_n = \vth$, we obtain
\begin{align*}
&\widetilde{\vrh \vth} \cdot \vth \to \vr \vt^2  \ \mbox{ weakly-(*) in } L^\infty((0,T)\times \Omega), \\
&\vrh \vth \cdot \vth \to \vr \vt^2 \ \mbox{ weakly-(*) in } L^\infty((0,T)\times \Omega).
\end{align*}
%Furthermore, thanks to the compatibility results  \eqref{cf3}, \eqref{cf4}, with the bounds \eqref{regularity-sq},  
Furthermore, thanks to the compatibility results  \eqref{cf3}, \eqref{cf4}, and Corollary \ref{Cor1} for $G(v) = v^2$, 
we apply Proposition~\ref{col-div-curl} once again with $r_n = \vr - \widetilde{\vrh}, \, v_n = \vth^2$ and obtain 
\begin{align*}
&(\vr - \widetilde{\vrh}) \vth^2 \to 0 \mbox{ weakly-(*) in } \ L^\infty((0,T)\times\Omega), \\
&(\vr - \vrh) \vth^2 \to 0 \mbox{ weakly-(*) in } \ L^\infty((0,T)\times\Omega).
\end{align*}
Consequently, 
\begin{align*}
\vr  \vth^2 \to \vr  \vt^2 \mbox{ weakly-(*) in } \ L^\infty((0,T)\times\Omega).
\end{align*}
Thus, similarly to preceding section, we obtain the strong convergence of the temperature, specifically
\begin{align}\label{con-T}
\vth \to \vt \mbox{ in } \ L^q((0,T)\times\Omega) \ \mbox{for any}\ 1 \leq q  < \infty, 
\end{align} 
or, extracting a suitable subsequence as the case may be, 
\begin{align}\label{con-T1}
	\vth \to \vt  \ \mbox{a.a. in}\ (0,T)\times\Omega.
\end{align}

\subsection{Final step of the proof of convergence} 

Summarizing the previous discussion, we have already established: 
\begin{itemize} 
	\item Strong (a.a.\ pointwise) convergence of the sequence $\{ \vth, \vuh \}_{h \searrow 0}$; weak convergence of $\{ \vrh \}_{h \searrow 0}$ as well as of the approximate 
		gradients $\{ \nabla_h \vuh \}_{h \searrow 0}$, $\{ \nabla_{\mathcal{E}} \vth \}_{h \searrow 0}$.
	\item Satisfaction of the boundary conditions \eqref{w6}. 	 
	\item Validity of the weak formulation of the equation of continuity \eqref{w3}, its renormalized variant \eqref{w4}, 
	as well as the momentum equation \eqref{w5}.
\end{itemize}

Thus, it remains to show the strong convergence of the approximate densities, 
\begin{equation} \label{strongd}
	\vrh \to \vr \ \mbox{ a.a. in }\ (0,T) \times \Omega, 	
\end{equation}
and the validity of both the entropy inequality \eqref{w7} and the ballistic energy inequality \eqref{w8}.	

Assume for a moment we have already shown \eqref{strongd}. 
Then the limiting passage in the approximate entropy inequality \eqref{cons3-a}
as well as the ballistic energy inequality \eqref{cons-4} is a routine matter. Indeed, by virtue of the weak lower semi--continuity 
of convex functionals and the convergences \eqref{weak-con-3}, \eqref{weak-con-4} we have 
\begin{align} 
\liminf_{h \to 0} &\intTO{ \frac{\phi}{\vth} \left(\kappa \frac{ \chi_h}{ \vth} \abs{\Gradd \vth }^2 + \difuh  \right)} \br &\geq 
 \intTO{ \frac{\phi}{\vt} \left(\kappa \frac{|\nabla \vt|^2 }{\vt}  + \mathbb{S}(\Grad \vu) : \Grad \vu  \right)}
\label{nonumber}
\end{align}
for any $\phi \geq 0$. The limit in the remaining terms follows from the pointwise convergence of all quantities in question. Consequently, 
the desired conclusion \eqref{w7}, \eqref{w8} follows from 
the consistency estimates stated in Lemmas \ref{Lcons3}, \ref{Lconsbe}.  

We may infer that the proof of Theorem \ref{thm_main} is complete as soon as we establish \eqref{strongd}. This is indeed 
the most delicate part of the proof. 
As this result is of independent interest, we state it as 
a separate Theorem \ref{THM:density} proved in the next section.

\section{Strong convergence of the density}\label{app:con-d}
In this section, we complete the proof of 
Theorem \ref{thm_main} by showing the strong convergence of the density. 
\begin{Theorem}\label{THM:density}
Under the hypotheses of Theorem \ref{thm_main}, there holds
\begin{align}\label{conv-d}
\vrh \to \vr \mbox{ in } \ L^q((0,T)\times\Omega) \ \mbox{for any}\ 1 \leq q  < \infty.
\end{align}
\end{Theorem}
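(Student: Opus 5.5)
We follow the Lions--Feireisl strategy for strong convergence of densities, adapted to the piecewise constant setting. Since $\vrh$ is uniformly bounded, it suffices to show $\overline{\vr \log \vr} = \vr \log \vr$ a.a., where the bar denotes the weak-(*) limit of $\vrh\log\vrh$. Strict convexity of $z\mapsto z\log z$ then yields strong convergence in $L^1$, and hence in every $L^q$, $q<\infty$, by \eqref{HP1}.

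\textbf{Step 1: effective viscous flux (discrete Lions identity).} We will show
\[
\overline{p(\vr,\vt)\,\vr} - (2\mu+\lambda)\,\overline{\vr\,\Div \vu} = \overline{p}\,\vr - (2\mu+\lambda)\,\vr\,\Div \vu,
\]
which, since $p_h=\vrh\vth$ and $\vth\to\vt$ strongly, reads $\vt\,(\overline{\vr^2}-\vr^2) = (2\mu+\lambda)\big(\overline{\vr\,\Div\vu}-\vr\Div\vu\big)$. Here $\overline{\vr\Div\vu}$ is the weak limit of $\vrh\Divh\vuh$.

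\emph{Discrete side.} We test the discrete momentum equation \eqref{VFV_M} with $\bfphi_h = \phi\,\Grad\Delta_h^{-1}[\vrh]$, localized by a cutoff $\phi\in C_c^\infty((0,T)\times\Omega)$. The inverse Laplacian is taken in the discrete sense with $\Laph=\Divmesh\Gradd$ (an appropriate version of the operators from Appendix \ref{sec-diffop}), so that $\Divh$ of the test field reproduces $\vrh$ up to controllable errors.

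\emph{Continuous side.} We test the limit momentum equation \eqref{w5} with $\phi\Grad\Delta^{-1}[\vr]$.

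\emph{Combining the two.} We subtract the two identities. The time derivative terms are treated with the discrete continuity equation \eqref{VFV_D}: $D_t\vrh$ is converted into a flux term, which produces a commutator
\[
\vrh\vuh\cdot\mathcal{R}[\vrh\vuh]-\vrh\vuh\otimes\vuh:\mathcal{R}[\vrh],
\]
with $\mathcal{R}=\Grad\otimes\Grad\Delta^{-1}$ the Riesz operator. This commutator converges to its continuous analogue by the Div--Curl Lemma (Proposition \ref{col-div-curl}), exactly as in \eqref{68}, using the strong convergence \eqref{con-u}. The upwind numerical diffusion $h^\alpha\jump{\vrh}$ and the upwind remainder are bounded by \eqref{ap3} times $h^{(1\pm\alpha)/2}$-type factors. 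The viscous term yields $(2\mu+\lambda)\Divh\vuh\,\vrh$ plus curl-type terms that cancel by the symmetry of $\bD_h$ and commutation of the discrete operators.

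\textbf{Step 2: renormalization.} We renormalize the discrete continuity equation with $b(\vr)=\vr\log\vr$: testing \eqref{VFV_D} with $b'(\vrh)$, convexity gives the inequality
\[
\partial_t\widetilde{b(\vrh)} + \Div(b(\vrh)\vuh) + \vrh\Divh\vuh \le \text{error}\to0 .
\]
The numerical dissipation has a favourable sign, and the remaining errors are handled as in Lemma \ref{Lcons1}. Passing to the limit yields
\[
\partial_t\overline{\vr\log\vr}+\Div(\overline{\vr\log\vr}\,\vu)+\overline{\vr\Div\vu}\le0 .
\]
For the limit, the renormalized equation \eqref{w4} (DiPerna--Lions) gives equality with $\vr\Div\vu$. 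Subtracting and integrating over $\Omega$, where boundary terms vanish by \eqref{bcu}, we get
\[
\frac{d}{dt}\intO{(\overline{\vr\log\vr}-\vr\log\vr)} \le -\intO{(\overline{\vr\Div\vu}-\vr\Div\vu)} = -\frac{1}{2\mu+\lambda}\intO{\vt(\overline{\vr^2}-\vr^2)}\le0 .
\]
The middle equality uses Step 1 with $\phi\to1$, which requires controlling cutoff errors near $\partial\Omega$ via the bounds on $\vrh$ and $\vu\in W^{1,2}_0$. The final inequality holds because $\overline{\vr^2}\ge\vr^2$ and $\vt>0$. Since the initial data converge strongly ($\vrh^0=\PiQ\vr_0\to\vr_0$ in $L^1$), the defect vanishes at $t=0$, hence for all $t$.

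\textbf{Main obstacle.} Step 1 at the discrete level is the hard part. Because $\Gradd\vuh$ is only bounded by $h^{-(1+\alpha)/2}$ (see \eqref{ap4}), the standard finite-element argument of Karper fails. One must build the discrete inverse Laplacian on the primal/dual grids so that $\Divh$, $\Gradh$, $\Divmesh$, $\Gradd$ commute exactly; this is the Lions identity in Lemma \ref{lem}. The convective flux terms should be estimated only through \eqref{ap3}, never through $\Gradd\vuh$. I would first attempt to write every error as a sum over faces of $h^{\alpha/2}\jump{\vrh}$ times $h^{1-\alpha/2}\times$(bounded discrete derivatives of $\Delta_h^{-1}\vrh$). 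The discrete elliptic regularity $\|\Gradd\Gradh\Delta_h^{-1}\vrh\|_{L^2}\le C$ would then close these estimates.
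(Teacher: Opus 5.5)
Your architecture matches the paper. Step 2 (discrete renormalization with $z\log z$, comparison with the DiPerna--Lions renormalized limit, sign of the defect from the effective viscous flux, strict convexity) is essentially the paper's proof of Theorem \ref{THM:density}, and your Step 1 identity is exactly Lemma \ref{lem}. Two simplifications are available in Step 2:
\begin{itemize}
\item Apply Lemma \ref{lem_r2} with $\phi_h\equiv 1$. Then all remainders are nonpositive (the $h^\alpha$ term because $B'$ is increasing), and no boundary terms appear.
\item You need not control cutoff errors near $\partial\Omega$. Since \eqref{defect-flux} is an identity in $\mathcal{D}'((0,T)\times\Omega)$ between integrable functions, it holds a.e.\ and can be integrated over $\Omega$ directly.
\end{itemize}

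Your route to Lemma \ref{lem} differs from the paper's. You propose a discrete inverse Laplacian with discrete $H^2$ regularity and assert that this is unavoidable. The paper avoids it by testing \eqref{VFV_M} with $\vvh=\PiQ[\psi\mathcal{A}^{\nabla}[\phi\vrh]]$, the continuous potential of the piecewise constant density. Each term is then handled as follows:
\begin{itemize}
\item The time and convective terms produce the continuous commutator, up to the flux consistency errors \eqref{es-conv} (which only need test functions with $L^2$ gradients) and $O(\TS^{1/2})$ time-shift terms.
\item The viscous term is handled by the exact discrete identity of Lemma \ref{lem-lap}, plus the comparison field $\vv_L=\psi\mathcal{A}^{\nabla}[\phi\PiL\vrh]$. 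Here $\|\Gradh\vvh-\Grad\vv_L\|_{L^2}\lesssim h^{(1-\alpha)/2}$ by \eqref{es-lift} and \eqref{ap3}.
\item The pressure term reduces to face jumps of $p_h$ against $\vv-\PiW\vvh$.
\end{itemize}
Only continuous Calder\'on--Zygmund theory is needed.

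Your discrete route is plausible; for example, the mismatch between $\Divh\Gradh$ and $\Laph$ costs only $O(h^{(1-\alpha)/2})$ in $L^2$ by \eqref{ap3}. However, it generates discrete Riesz operators. Identifying the limit of the resulting commutator would require an extra comparison with the continuous operators, on sequences whose oscillations are controlled by \eqref{ap3}, and your sketch does not provide this. Also, the curl terms do not cancel. With a cutoff, $\Curl(\psi\mathcal{A}^{\nabla}[\phi\vrh])=\Grad\psi\times\mathcal{A}^{\nabla}[\phi\vrh]$ survives and must be passed to the limit by compactness, as in Lemma \ref{lem-lim}.

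The one step whose stated justification would fail is the commutator limit. Proposition \ref{col-div-curl} handles products $r_nv_n$ with time-derivative control on $r_n$ and gradient control on $v_n$. In $\vrh\,\mathcal{R}[\vrh\vuh]$ and $\vrh\vuh\otimes\vuh:\mathcal{R}[\vrh]$, no factor has such control, and the two products need not converge separately to products of limits. So the proposition cannot be used ``exactly as in \eqref{68}''.

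What works is the commutator Lemma \ref{lem-divcurl-1}. It is a purely spatial div--curl statement, so it needs weak convergence at every fixed $t$. Since $\vrh$ is piecewise constant in time, the paper proceeds in four steps:
\begin{itemize}
\item Replace the commutator by the same expression built from $\widetilde{\vrh}$ and $\widetilde{\vrh\vuh}$; the error is $\lesssim\TS^{1/2}$ by \eqref{ap2}.
\item Obtain weak convergence for each $t$ from \eqref{contr} and \eqref{contm}.
\item Apply Lemma \ref{lem-divcurl-1} pointwise in $t$.
\item Only then use \eqref{con-u} for the factor $\vuh$.
\end{itemize}
Alternatively, the space-time Lemma \ref{div-curl} applies to $(\phi\widetilde{\vrh},\phi\vrh\vuh)$ paired with the curl-free field $\nabla_{t,x}\mathcal{A}^i[\phi\widetilde{\vrh}]$. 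Their product reproduces the commutator up to lower-order terms, but this again relies on the time interpolants and the continuous potential.
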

The key ingredient of the proof of Theorem \ref{THM:density} is a discrete version of the so--called Lions identity (cf. Lions \cite{Lions}) stated in the following lemma. 

\begin{Lemma}\label{lem}
Under the hypotheses of Theorem \ref{thm_main}, there holds
\begin{align}\label{lions-tool}
\lim_{h\to 0} \int_{0}^{T}\intO{\phi \psi \vrh \Big( \vrh \vth - (2\mu+\lambda) \Divh \vuh\Big)} \dt =  \int_{0}^{T}\intO{\phi \psi \vr \Big( \vr  \vt - (2\mu+\lambda)  \Div \vu\Big)} \dt
\end{align}
for any $\psi \in C_c^{\infty}((0,T)\times \Omega)$ and $\phi \in C_c^{\infty}(\Omega)$.
\end{Lemma}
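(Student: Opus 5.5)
We follow Lions' approach: test the momentum equation with the inverse divergence of the (localized) density and compare with the limit equation tested with the analogous field built from the limit density. Fix $\phi\in C^\infty_c(\Omega)$, $\psi\in C^\infty_c((0,T)\times\Omega)$ and set $\mathcal{A}=\Grad\Delta^{-1}$ (periodic in the horizontal variables, homogeneous Dirichlet in $x_d$, acting on $\phi\vrh$ extended by zero). We choose the discrete test field $\bfphi_h=\psi\,\Pi_Q\mathcal{A}[\phi\vrh]$ in \eqref{VFV_M}. More conveniently, we take the smooth field $\psi\mathcal{A}[\phi\widetilde{\vrh}]$ in the consistency formulation \eqref{cons-2a}, which is admissible for $L^2(0,T;W^{1,2}_0)$ fields up to the error \eqref{cone2a}. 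Since $\phi\widetilde\vrh$ is bounded in $L^\infty$, elliptic regularity gives $\mathcal{A}[\phi\widetilde\vrh]\in_b L^\infty(0,T;W^{1,q})$ for all $q<\infty$. Moreover, \eqref{derivr} together with \eqref{divcurl-d} gives $\partial_t\mathcal{A}[\phi\widetilde\vrh]=-\mathcal{A}[\phi\Div(\vrh\vuh)]+\dots$, which is bounded in $L^2(0,T;L^2)$. Because $\psi$ is compactly supported, the boundary terms disappear.

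Expanding \eqref{cons-2a}, the pressure and viscous terms produce $\int\psi\,(p_h-(\lambda+\mu)\Divh\vuh)\,\phi\widetilde\vrh$, using $\Div\mathcal{A}=\mathrm{Id}$ and $\mathbb{S}_h:\Grad\mathcal{A}[\cdot]$. To obtain $\Divh\vuh$ and $\Dhuh$ paired against a symmetric gradient, we use the compatibility \eqref{cf1}, with $\mu\Grad\mathcal A$ handled through $\Div\Grad\mathcal A=\Grad\,\mathrm{id}$ in the form $\mu\,\Gradh\vuh:\Grad^2\Delta^{-1}$, which combines with $\lambda$ into $2\mu+\lambda$. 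The replacement $\widetilde\vrh\leftrightarrow\vrh$ costs an $o(1)$ error by \eqref{prox1}, because $\Divh\vuh$ is only bounded in $L^2$ while $\widetilde\vrh-\vrh\to0$ in every $L^q$. This is not enough directly, so we use $|\widetilde\vrh-\vrh|\le\TS|D_t\vrh|$, which is fine since $\vrh$ is bounded and $\TS^{1/2}\|D_t\vrh\|_{L^2}\lesssim1$. After this step the left-hand side of \eqref{lions-tool} equals a sum of the remaining terms: the time derivative term $-\int\widetilde{\vrh\vuh}\cdot\partial_t(\psi\mathcal{A}[\phi\widetilde\vrh])$, the convective term $\int\psi\,\vrh\vuh\otimes\vuh:\Grad\mathcal{A}[\phi\widetilde\vrh]$, and lower-order terms. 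We perform the same computation for the limit, testing \eqref{w5} with $\psi\mathcal{A}[\phi\vr]$ and using the continuity equation \eqref{w3}, which is legitimate since all limit quantities are bounded and $\vu\in L^2W^{1,2}$.

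It then remains to pass to the limit in the remaining terms. The lower-order terms converge by the strong convergence \eqref{con-u} and \eqref{con-T}, together with the compactness $\mathcal{A}[\phi\widetilde\vrh]\to\mathcal{A}[\phi\vr]$ strongly in $C([0,T];L^q)$ coming from \eqref{contr} and compactness of $\mathcal{A}$. The convective terms combine into the classical commutator
\[
\int\psi\,\vuh\cdot\Big(\widetilde\vrh\,\mathcal{R}[\phi\vrh\vuh]-\phi\vrh\vuh\,\mathcal{R}[\widetilde\vrh]\Big)\dxdt,\qquad \mathcal{R}=\Grad\otimes\Grad\Delta^{-1}.
\]
We treat it with the Div--Curl lemma in the form of Proposition~\ref{col-div-curl}, or equivalently with the commutator lemma of Feireisl, using that $\vrh\vuh$ and $\widetilde\vrh$ converge in $C_{\rm weak}(L^q)$ by \eqref{contr} and \eqref{contm}. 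Since $\vuh\to\vu$ strongly, the limit of the commutator is the expected one.

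The main obstacle is the time-derivative term. It is here that we must replace $\partial_t\mathcal{A}[\phi\widetilde\vrh]$ by $-\mathcal{A}[\phi\Div_h$-flux$]$ through the discrete continuity equation \eqref{VFV_D}, and we have to control the numerical diffusion $h^\alpha\jump{\vrh}$ and the upwind dissipation. We plan to write $\partial_t\widetilde\vrh$ through \eqref{cons-1a} and \eqref{con-e1a}, where the error is $o(1)$ in $L^2W^{-1,2}$. We pair it with $\mathcal{A}^T[\psi\widetilde{\vrh\vuh}]$, which is bounded in $L^2W^{1,2}$ because $\widetilde{\vrh\vuh}$ is bounded in $L^\infty$. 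This reduces the term to $\int\vrh\vuh\cdot\Grad(\phi\,\mathcal{A}\cdot[\psi\widetilde{\vrh\vuh}])$, which merges with the convective term into the commutator above. The delicate point, and the one we expect to require the most work, is that only \eqref{con-e1a} is available, with no rate, and that \eqref{ap4} prevents any use of a broken $H^1$ bound on $\vuh$. So every product must be arranged so that $\vuh$ appears either undifferentiated, where strong convergence applies, or as $\Divh\vuh$ paired with a weakly converging bounded quantity.
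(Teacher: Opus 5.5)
\textbf{Gap: the viscous term.} This term, not the time derivative, is the genuinely discrete difficulty of the lemma. After testing you need
\[
\int_0^T\!\!\int_\Omega \bS_h:\Grad\big(\psi\mathcal{A}^\nabla[\phi\widetilde{\vrh}]\big)\dx\dt=(2\mu+\lambda)\int_0^T\!\!\int_\Omega\psi\phi\,\widetilde{\vrh}\,\Divh\vuh\dx\dt+(\text{terms with }\Grad\psi)+o(1),
\]
which is essentially $\int\psi\,\Gradh\vuh:\mathcal{R}[\phi\widetilde{\vrh}]\approx\int\psi\phi\,\widetilde{\vrh}\,\Divh\vuh$. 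Here $\Gradh\vuh$ is a central-difference quantity, not the gradient of any function, and \eqref{ap4} rules out a usable broken $H^1$ bound. So the continuous identity $\int\Grad u:\Grad^2\Delta^{-1}f=\int f\,\Div u$ is unavailable, and \eqref{cf1} cannot replace it. Used once with $\bbT=\psi\mathcal{R}[f]$ and once with $\bbT=\psi f\,\mathbb{I}$, it costs $h^{(1-\alpha)/2}\|\bbT\|_{L^2(0,T;W^{1,2})}$. But $\psi\mathcal{R}[\phi\widetilde{\vrh}]\notin W^{1,2}$, because $\widetilde{\vrh}$ is piecewise constant in space. Even if you first smooth $\vrh\mapsto\PiL\vrh$, \eqref{es-lift} and \eqref{ap3} only give $\|\Grad\PiL\vrh\|_{L^2((0,T)\times\Omega)}\lesssim h^{-(1+\alpha)/2}$. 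The error is then $O(h^{-\alpha})$, which vanishes only for $\alpha<0$, not on the full range $\alpha\in(-1,1)$.

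\textbf{Missing ingredient: an exact discrete Helmholtz-type identity.} The central differences defining $\Gradh$, $\Divh$, $\Curlh$ commute. Hence for discrete fields $\int\Gradh\vuh:\Gradh^T\vvh=\int\Divh\vuh\,\Divh\vvh$ and $\int\Gradh\vuh:\Gradh\vvh=\int\Curlh\vuh\cdot\Curlh\vvh+\int\Divh\vuh\,\Divh\vvh$ (Lemmas \ref{lem-op-comp}, \ref{lem-lap}), so $\int\bS_h:\Gradh\vvh=\mu\int\Gradh\vuh:\Gradh\vvh+(\mu+\lambda)\int\Divh\vuh\,\Divh\vvh$. The paper applies this to $\vvh=\PiQ[\psi\mathcal{A}^\nabla[\phi\vrh]]$ inserted directly into \eqref{VFV_M}. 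Only afterwards does it replace $\Curlh\vvh$, $\Divh\vvh$ by $\Curl\vv_L$, $\Div\vv_L$, with $\vv_L=\psi\mathcal{A}^\nabla[\phi\PiL\vrh]$; its curl contains only $\Grad\psi$, and its divergence is $\psi\phi\PiL\vrh$ plus a $\Grad\psi$-term. The cost is $\|\Gradh\vuh\|_{L^2}\|\Gradh\vvh-\Grad\vv_L\|_{L^2}\lesssim\|\vrh-\PiL\vrh\|_{L^2}+h\|\Grad\PiL\vrh\|_{L^2}\lesssim h^{(1-\alpha)/2}$. The full factor $h$ in front of $\|\Grad\PiL\vrh\|$ is what makes this work for every $\alpha\in(-1,1)$ (Lemmas \ref{lem-vis}, \ref{lem-bS}).

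Your continuous-field route needs this same estimate anyway. The viscous part $\int\bS_h:(\Grad\bfphi-\Gradh\PiQ\bfphi)$ of the error you absorb into \eqref{cone2a} is shown in the appendix only to be bounded (see \eqref{me-bb}). For your $h$-dependent $\bfphi=\psi\mathcal{A}^\nabla[\phi\widetilde{\vrh}]$ it becomes small precisely through this $\PiL$ comparison.

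The other steps of your plan agree with Lemmas \ref{lem-f} and \ref{lem-lim}. These are: moving the time derivative onto $\mathcal{A}^\nabla[\phi\widetilde{\vrh}]$ and using the continuity equation with test function $\phi\mathcal{A}^{\rm div}[\psi\widetilde{\vrh\vuh}]$ and the uniform flux estimate \eqref{es-conv}; applying the commutator lemma for each fixed $t$ to the time-continuous interpolants, together with strong convergence of $\vuh$; and comparing with the limit system.
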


To the best of our knowledge, there is only one result similar 
to Lemma \ref{lem} in the context of numerical analysis, namely that of Karper \cite{Karper}. Our aim is to show  \eqref{lions-tool}
for the present finite-volume scheme. Although the proof shares some similarity with \cite{Karper}, the main ideas are rather different since we only work with piecewise constant approximate solutions.

Before proving Lemma \ref{lions-tool} and Theorem \ref{THM:density} we introduce the necessary notation and some preliminary estimates. For simplicity, we focus on the case $d=3$; the case $d=2$ can be handled similarly. 

\subsection{Notation}
Let us introduce an interpolation operator
\begin{align}
\PiL: Q_h \to \mathcal{P}_h, \quad  \PiL r_h (x_{\sigma}) = \frac1{\mbox{card}(N_{\sigma})} \sum_{K\in N_\sigma} r_h|_K,
\end{align}
where $\mathcal{P}_h$ is the space of continuous piecewise linear functions,  $x_{\sigma}$ is a generic vertex and $N_{\sigma}$ is the set of elements $K\in \mesh$ having $x_{\sigma}$ as a  vertex.
We recall the following estimates of $\PiL$ \cite[Lemma 5.8]{GL}: %or \cite[Lemma 8.1]{Karper}
\begin{subequations}\label{es-lift}
\begin{align}
& \norm{\Grad \PiL r_h}_{L^2(\Omega)}  \leq C \left( \intfacesint{ \frac{\jump{r_h}^2}{h}}  \right)^{1/2}, \\
&\norm{\PiL r_h - r_h}_{L^2(\Omega)} \leq h C \left( \intfacesint{ \frac{\jump{r_h}^2}{h}}  \right)^{1/2}
\end{align}
\end{subequations}
for any $r_h \in Q_h$.

\medskip
Next, we introduce the following pseudo-differential operators
\begin{subequations}\label{op-lift}
\begin{align}
& \mathcal{A}^i: L^p(\Omega) \to W^{1,p}(\Omega), \quad  \mathcal{A}^i(r) = \frac{\pd }{\pd x_i} \Delta^{-1} \left[ \mathds{1}_\Omega r \right]|_{\Omega}, \quad i = 1,2,3, \\
& \mathcal{A}^{\nabla}[r] = (\mathcal{A}^1[r], \ \mathcal{A}^2[r], \ \mathcal{A}^3[r]), \\
& \mathcal{A}^{\rm{div}}[\vv] = \mathcal{A}^1[v_1]+ \mathcal{A}^2[v_2]+ \mathcal{A}^3[v_3], \quad \vv \equiv (v_1,v_2,v_3),
\end{align}
\end{subequations}
where $\Delta^{-1}$ is defined on the whole space $\R^3$ as the convolution with the Newtonian potential 
\[
\Delta^{-1} \phi (x) = - \frac{1}{4 \pi} \int_{\R^3} \frac{\phi(y)}{|x-y|} {\rm d}y.
\] 

We recall the basic properties of these operators proved in \cite{Fe:2004,Karper}.
\begin{Lemma}[{\cite[Lemma 6.1]{Karper}}]
For any $\vv\in L^p(\Omega;\R^3)$, $g\in L^q(\Omega)$ with $1/p+1/q=1$ and $p,q\in [1,\infty)$ there holds
\begin{align}\label{op-lift-IBP}
\intO{ \vv \cdot \mathcal{A}^{\nabla}[g \phi]} = - \intO{ \mathcal{A}^{\rm{div}}[\vv]\, g \phi}, \quad \forall \phi \in C_0^{\infty}(\Omega). 
\end{align}
In addition, 
\begin{align}
 \norm{\Grad\mathcal{A}^i[f]}_{L^q(\Omega)} &\leq C \norm{f}_{L^q(\Omega)} \ \mbox{ for any }\ 1 < q < \infty, \br
 \norm{\mathcal{A}^i[f]}_{L^p(\Omega)} &\leq C \norm{f}_{L^q(\Omega)} 
 \ \mbox{ for }\ p \leq \frac{3q}{3 - q},\ 1 \leq q < 3, \ 
 p \ \mbox{ arbitrary finite if }\ q = 3, \br
 \| \mathcal{A}^i[f] \|_{C^{0, \nu}(\Ov{\Omega}) } &\leq C \norm{f}_{L^q(\Omega)} \ \mbox{ if }\ 0 < \nu < \frac{3}{q}.
\label{op-lift-IBP-1}
\end{align}
\end{Lemma}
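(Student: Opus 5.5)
The plan is to write $\mathcal{A}^i$ as a convolution operator with an explicit kernel and read off each claim from classical harmonic analysis. Since $\Delta^{-1}$ is convolution with $N(x) = -\frac{1}{4\pi|x|}$, we have
\[
\mathcal{A}^i[r](x) = \int_{\Omega} K_i(x-y)\, r(y)\,{\rm d}y, \qquad K_i(z) = \partial_{z_i} N(z) = \frac{1}{4\pi}\frac{z_i}{|z|^3},
\]
for $r$ extended by zero outside $\Omega$. The kernel satisfies $|K_i(z)| \leq C |z|^{-2}$, so it is locally integrable in $\R^3$. It is also \emph{odd}: $K_i(-z) = -K_i(z)$. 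Everything below rests on these two facts.

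\textbf{Step 1 (integration by parts \eqref{op-lift-IBP}).} First take $\vv$ and $g$ smooth, or bounded. Since $\Omega$ is bounded and $K_i \in L^1_{\rm loc}$, Fubini is justified and gives
\[
\intO{ v_i\, \mathcal{A}^i[g\phi]} = \int_\Omega\int_\Omega v_i(x) K_i(x-y) (g\phi)(y) \,{\rm d}y\,{\rm d}x = -\int_\Omega \mathcal{A}^i[v_i](y)\, (g\phi)(y)\,{\rm d}y,
\]
where the oddness $K_i(x-y) = -K_i(y-x)$ produces the sign. Summing over $i = 1,2,3$ yields \eqref{op-lift-IBP} for regular data. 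The general case $\vv \in L^p$, $g \in L^q$ then follows by density, once we know that both sides are continuous in these norms. On the bounded set $\Omega$, Young's inequality with $K_i \mathds{1}_{\{|z| \leq {\rm diam}\,\Omega\}} \in L^1$ shows that $\mathcal{A}^i$ is bounded on every $L^s(\Omega)$, $1 \leq s \leq \infty$. Since $\phi$ is bounded, both sides are therefore continuous bilinear forms on $L^p \times L^q$.

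\textbf{Step 2 (gradient bound).} We have $\partial_j \mathcal{A}^i[f] = \partial_j\partial_i \Delta^{-1}[\mathds{1}_\Omega f]$, which is a Riesz-transform composition $-R_iR_j$ (up to normalization) applied to $\mathds{1}_\Omega f$. It has Fourier multiplier $\xi_i\xi_j/|\xi|^2$. The Calderón--Zygmund theorem, or Mikhlin's multiplier theorem, gives boundedness on $L^q(\R^3)$ for $1 < q < \infty$. Restricting to $\Omega$ gives the first estimate in \eqref{op-lift-IBP-1}.

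\textbf{Step 3 (Lebesgue bounds).} Because $|K_i| \leq C|z|^{-2}$, we have $|\mathcal{A}^i[f]| \leq C\, I_1[|f|\mathds{1}_\Omega]$, where $I_1$ is the Riesz potential of order one. For $1 < q < 3$, the Hardy--Littlewood--Sobolev inequality gives $L^q \to L^{3q/(3-q)}$, and smaller $p$ follow from the boundedness of $\Omega$. The endpoint $q = 1$ is the delicate point, since HLS holds only in weak form there. Instead I would use Young's inequality with the truncated kernel, which lies in $L^s$ for every $s < 3/2$; this gives every $p < 3/2$, and I would check whether the stated endpoint is really needed. For $q = 3$, I would apply the $q < 3$ case with $q' < 3$ arbitrarily close to $3$, using boundedness of $\Omega$, to obtain every finite $p$.

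\textbf{Step 4 (Hölder bound).} This is the main obstacle, in the sense that the exponent bookkeeping must be checked carefully. For $q > 3$, Steps 2 and 3 give $\mathcal{A}^i[f] \in W^{1,q}(\Omega)$ with norm $\leq C\|f\|_{L^q}$. Morrey's embedding on a Lipschitz domain then gives $C^{0,\nu}(\Ov{\Omega})$ with $\nu \leq 1 - 3/q$. Alternatively, one can estimate $|K_i(x-y) - K_i(x'-y)|$ directly by splitting the integral into $|x-y| \leq 2|x-x'|$ and its complement, and then apply Hölder's inequality in $y$. I expect the admissible range of $\nu$ to be $0 < \nu < 1 - 3/q$. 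I would verify this carefully against the paper's condition $0 < \nu < 3/q$, since that condition may be a misprint.
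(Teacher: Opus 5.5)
The paper gives no proof of this lemma. It is quoted from \cite{Karper} (and \cite{Fe:2004}), so your argument is an independent, self-contained route, and it is correct. You write $\mathcal{A}^i$ as convolution with the odd kernel $K_i(z)=z_i/(4\pi|z|^3)$. You then get \eqref{op-lift-IBP} from Fubini and oddness for bounded data, and extend by density: the truncated kernel lies in $L^1$, so $\mathcal{A}^i$ is bounded on every $L^s(\Omega)$ and both sides are continuous on $L^p\times L^q$. The gradient bound comes from the multiplier $\xi_i\xi_j/|\xi|^2$. The Lebesgue bounds come from $|K_i|\le C|z|^{-2}$ via Hardy--Littlewood--Sobolev, with Young's inequality for $q=1$. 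The H\"older bound comes from the $W^{1,q}$ bound plus Morrey.

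The citation buys brevity. Your route shows exactly which exponent ranges are valid, and both of your doubts are justified.

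\textbf{The endpoint $q=1$, $p=3/2$ is false.} For $f_n=|B_{1/n}|^{-1}\mathds{1}_{B_{1/n}(x_0)}$ one has $\mathcal{A}^i[f_n]\to K_i(\cdot-x_0)$ away from $x_0$. Fatou would then give $K_i\in L^{3/2}_{\rm loc}$, whereas $|K_i(z)|^{3/2}=c\,(|z_i|/|z|)^{3/2}|z|^{-3}$ is not integrable at $0$. So your range $p<3/2$ for $q=1$ is optimal.

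\textbf{The condition $0<\nu<3/q$ is a misprint.} For $q\le 3$ not even an $L^\infty$ bound holds. Indeed, $|\mathcal{A}^i[f](x_0)|\le C\|f\|_{L^q}$ for all $f\in C^\infty_c(\Omega)$ would force $K_i(x_0-\cdot)\in L^{q'}(\Omega)$ with $1/q+1/q'=1$. That requires $2q'<3$, i.e.\ $q>3$. The correct statement is $q>3$, $0<\nu\le 1-3/q$, exactly as you derived.

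Neither correction affects the paper. There the lemma is only applied to bounded functions such as $\phi\vr_h$, $\phi\Pi^L_h\vr_h$ and $\psi\vr_h\vu_h$, with $q=2$ or $q$ large.
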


\begin{Lemma}[{\cite[Corollary 6.1]{Fe:2004}}]\label{lem-divcurl-1}
Let $v_n$ and $w_n$ be sequences of vector valued functions such that $v_n \to v$ weakly in $L^p(\Omega;\R^d)$ and $w_n \to w$ weakly in $L^q(\Omega;\R^d)$ with $1/p+1/q \leq 1$.
Then
\[
v_n \cdot \Grad \mathcal{A}^{\rm div}[w_n] - w_n \cdot \Grad \mathcal{A}^{\rm div}[v_n]
\to v \cdot \Grad \mathcal{A}^{\rm div}[w] - w \cdot \Grad \mathcal{A}^{\rm div}[v]
\qquad \mbox{in } \mathcal{D}'(\Omega).
\]
Similarly, if $B_n \to B$ weakly in $L^p(\Omega)$, then
\[
B_n \Grad \mathcal{A}^{\rm div}[w_n] - \Grad \mathcal{A}^{\nabla}[B_n]\, w_n
\to B \Grad \mathcal{A}^{\rm div}[w] - \Grad \mathcal{A}^{\nabla}[B]\, w
\qquad  \quad \mbox{in } \mathcal{D}'(\Omega;\R^d).
\]
\end{Lemma}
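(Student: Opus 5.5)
The plan is to work on all of $\R^3$: extend $v_n,w_n,B_n$ by zero outside $\Omega$. Then $\mathcal{A}^{\rm div}[v]=\Delta^{-1}\Div(\mathds{1}_\Omega v)$, and $\Grad\mathcal{A}^{\rm div}[v]=\Grad\Delta^{-1}\Div(\mathds{1}_\Omega v)$ is a matrix of double Riesz transforms $\mathcal{R}_{ij}=\partial_i\partial_j\Delta^{-1}$ applied to $\mathds{1}_\Omega v$. By the Calder\'on--Zygmund bound \eqref{op-lift-IBP-1} these maps are bounded on $L^p$ for $1<p<\infty$, and by linearity they preserve weak convergence. Hence $\Grad\mathcal{A}^{\rm div}[v_n]\to \Grad\mathcal{A}^{\rm div}[v]$ weakly in $L^p$, and likewise for $w_n$ in $L^q$. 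The convergence is only tested against $C_c^\infty(\Omega)$ functions, so it suffices to prove the identity in $\mathcal{D}'(\R^3)$ for the extended functions. This localization is harmless.

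The key step is a Helmholtz-type splitting. Write $v_n=\Grad\mathcal{A}^{\rm div}[v_n]+\big(v_n-\Grad\mathcal{A}^{\rm div}[v_n]\big)$, and similarly for $w_n$. The first part is a gradient, so its curl vanishes. The second part is divergence free, since $\Div\Grad\Delta^{-1}\Div v=\Div v$. A direct algebraic check shows that the symmetric product $\Grad\mathcal{A}^{\rm div}[v_n]\cdot\Grad\mathcal{A}^{\rm div}[w_n]$ cancels, so
\[
v_n\cdot\Grad\mathcal{A}^{\rm div}[w_n]-w_n\cdot\Grad\mathcal{A}^{\rm div}[v_n]
=\big(v_n-\Grad\mathcal{A}^{\rm div}[v_n]\big)\cdot\Grad\mathcal{A}^{\rm div}[w_n]-\big(w_n-\Grad\mathcal{A}^{\rm div}[w_n]\big)\cdot\Grad\mathcal{A}^{\rm div}[v_n].
\]
Each term on the right is the scalar product of a divergence-free field, bounded in $L^p$ (resp.\ $L^q$), with a curl-free field, bounded in $L^q$ (resp.\ $L^p$). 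By the Murat--Tartar div-curl lemma (for $1/p+1/q\le 1$ and $p,q\in(1,\infty)$), such products converge in $\mathcal{D}'$ to the product of the weak limits. Passing to the limit and reversing the algebra gives the first claim.

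For the second claim I argue componentwise. Fix $i$ and apply the first statement to $v_n=B_n\ve_i$. Then $v_n\cdot\Grad\mathcal{A}^{\rm div}[w_n]=B_n\,\partial_i\mathcal{A}^{\rm div}[w_n]$. Also $w_n\cdot\Grad\mathcal{A}^{\rm div}[B_n\ve_i]=\sum_j w_n^j\,\partial_j\partial_i\Delta^{-1}(\mathds{1}_\Omega B_n)=\big(\Grad\mathcal{A}^{\nabla}[B_n]\,w_n\big)_i$, by symmetry of $\partial_i\partial_j$. So the second statement is exactly the $i$-th component of the first.

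The only delicate point is the borderline case $1/p+1/q=1$, where the products are merely $L^1$ and compactness in $W^{-1}$ is not automatic. I will handle it with the standard version of the div-curl lemma valid for conjugate exponents. It uses that the curl-free factor is $\Grad\phi_n$ with $\phi_n$ bounded in $W^{1,q}_{\rm loc}$, hence compact in $L^q_{\rm loc}$ by Rellich. Testing against $\varphi\in C_c^\infty$ and integrating by parts, the divergence-free factor $\vc{z}_n$ is then paired as $\int\vc{z}_n\cdot\Grad\phi_n\,\varphi=-\int\phi_n\,\vc{z}_n\cdot\Grad\varphi$, a weak--strong pairing that converges. If $p=\infty$ or $q=\infty$ is needed, I reduce to the finite-exponent case, since on the bounded set $\Omega$ weak-$*$ convergence in $L^\infty$ implies weak convergence in every finite $L^r$.
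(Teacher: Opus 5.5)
Your argument is correct and follows the route the paper indicates. The paper gives no proof beyond citing \cite[Corollary 6.1]{Fe:2004} and remarking that the commutator lemma is a direct consequence of the Div-Curl lemma. That is your argument: the Helmholtz splitting $v_n = \Grad\mathcal{A}^{\rm div}[v_n] + (v_n - \Grad\mathcal{A}^{\rm div}[v_n])$, div-curl applied to each resulting product, and the second claim read off componentwise from the first.

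Your separate integration-by-parts and Rellich argument for the conjugate case $1/p+1/q=1$ is a worthwhile addition. The Div-Curl lemma recalled in the appendix (Lemma \ref{div-curl}) only covers $1/p+1/q<1$, whereas the statement allows equality.
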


Lemma \ref{lem-divcurl-1} is the so-called Commutator Lemma
and can be seen as a direct consequence of the Div-Curl lemma. It 
is one of the main ingredients of the existence theory developed in 
\cite{Fe:2004}. 

\subsection{Preliminary estimates} 

Following the proof in the continuous case, we show Lemma~\ref{lem} 
by considering the quantity $\psi \mathcal{A}^{\nabla}[\phi \vr_h]$ as the test function in the approximate momentum equation, where
$\psi \in C_c^{\infty}((0,T)\times \Omega)$ and  
$\phi \in C^\infty_c(\Omega)$. Accordingly, we define
\begin{align}\label{test-func}
\vv=  \psi \mathcal{A}^{\nabla}[\phi \vr_h], \quad \vvh = \PiQ \vv; \quad  \vv_L =  \psi \mathcal{A}^{\nabla}[\phi \PiL\vr_h].
\end{align}
Then, it holds 
\begin{align}\label{curl-d}
& \Curl \vv %\equiv \Curl \Big( \psi \mathcal{A}^{\nabla}[\phi \vr_h]) \Big) 
= \Grad \psi \times \Big(  \mathcal{A}^{\nabla}[\phi \vr_h] \Big) + \psi \Curl  \Big(  \mathcal{A}^{\nabla}[\phi \vr_h] \Big) = \Grad \psi \times \Big(  \mathcal{A}^{\nabla}[\phi \vr_h] \Big), \\ \label{div-d}
&\Div \vv %\equiv \Div \Big( \psi \mathcal{A}^{\nabla}[\phi \vr_h]) \Big) 
= \Grad \psi \cdot \Big(  \mathcal{A}^{\nabla}[\phi \vr_h] \Big) + \psi \Div  \Big(  \mathcal{A}^{\nabla}[\phi \vr_h] \Big) = \Grad \psi \cdot \Big(  \mathcal{A}^{\nabla}[\phi \vr_h] \Big) + \psi \phi \vr_h,
\end{align}
due to the fact that $
\Curl \mathcal{A}^{\nabla}[r] = 0,  \Div\mathcal{A}^{\nabla}[r] = r $ for any $r \in L^p(\Omega)$.

The next step is to use the quantity 
\[
\bfphi_h = \vc{v}_h = \Pi_Q [ \psi \mathcal{A}^\nabla [\phi \vrh] ] 
\]
as a test function in the approximate momentum balance \eqref{VFV_M}. Note carefully that this is completely legal, as both 
$\psi$ and $\phi$ are compactly supported; whence vanish in a neighbourhood of the boundary together with $\vc{v}_h$  and a sufficient small $h$. Thus, we obtain 
\begin{align}\label{VFV_M1}
& \intTO{D_t  (\vrh  \vuh ) \cdot  \vvh} - \int_{0}^{T} \intfacesint{ {\bf F}_h^{\alpha}  (\vrh^{k}  \vuh^{k} ,\vuh^{k} ) \cdot \jump{\vvh}   }\dt   \br
& \quad = \intTO{\Divh (\bS_h -p_h \bI ) \cdot \vvh} + \intTO{\vrh \vc{g} \cdot \vvh}.
\end{align}
 Applying the integration-by-parts formula \eqref{InByPa}, we have
\begin{align}\label{eq-lem}
&\intTO{ D_t  (\vrh  \vuh ) \cdot \vvh } - \int_{0}^{T} \intfacesint{ {\bf F}_h^{\alpha}  (\vrh \vuh ,\vuh ) \cdot \jump{\vvh} }\dt -\intTO{\vrh \vc{g} \cdot  \vvh}  \br
& \quad =  -\intTO{ (\bS_h -p_h \bI ) : \Gradh \vvh }.
\end{align}
Here and hereafter, we extend $\vvh^{out}|_{\sigma} = 0$ and $(\Gradh \vvh)^{out}|_{\sigma}  = 0$ for any $\sigma \in \facesext$.

\begin{Lemma}\label{lem-vis}
	
Let us set 	
\begin{align}
&\mathcal{F}_1(\vr_h) \br &= \intTO{ \Gradh \vc{u}_h : \Gradh \vvh} - 
  \intTO{ \left(  \Curlh \vc{u}_h \cdot \left(\Grad \psi \times  \mathcal{A}^{\nabla}[\phi \vr_h]\right)  + \Divh \vc{u}_h  \Grad \psi \cdot  \mathcal{A}^{\nabla}[\phi \vr_h]  \right)} \br
&\quad - \intTO{ \psi \phi \vr_h \Divh \vc{u}_h} , %\label{F1}
\nonumber
\end{align}
\begin{align}
&\mathcal{F}_2(\vr_h) \br &=  \intTO{ \Divh \vuh \, \Divh \vvh}  -
 \intTO{ \Divh \vc{u}_h  \Grad \psi \cdot  \mathcal{A}^{\nabla}[\phi \vr_h] }  - \intTO{ \psi \phi \vr_h \Divh \vc{u}_h}. %\label{F2}
\nonumber
\end{align}

Under the hypotheses of Theorem \ref{thm_main}, we have
\begin{align*}
|\mathcal{F}_1(\vr_h)| + |\mathcal{F}_2(\vr_h)|  \leq C (\Lambda) h^{(1-\alpha)/2}.
\end{align*}
\end{Lemma}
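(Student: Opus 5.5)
The plan is to reduce both estimates to a discrete integration by parts in which the discrete operators acting on $\vvh=\PiQ\vv$ are traded for the corresponding continuous operators acting on $\vv$. The error of this trade is then controlled by the compatibility estimate \eqref{cf1} and by the stability bounds \eqref{ap1}, \eqref{ap3} of Lemma~\ref{lm_ub}.

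\textbf{Step 1: the model identity.} For each $t$ the field $\vv=\psi\mathcal{A}^\nabla[\phi\vrh]$ is compactly supported in $\Omega$. By \eqref{op-lift-IBP-1} and the bound $\vrh\le\Ov{\vr}$ it satisfies
\[
\|\vv\|_{W^{1,q}}\le C(\Lambda)\quad\mbox{for every } q<\infty .
\]
For smooth compactly supported fields $\vc{w}$ one has the identities $\Grad\vc{w}:\Grad\vv=\Curl\vc{w}\cdot\Curl\vv+\Div\vc{w}\,\Div\vv$ up to a divergence term, and $\int\Div\vc{w}\,\Div\vv$. Inserting \eqref{curl-d} and \eqref{div-d} into these shows that, at the continuous level, the three subtracted terms in $\mathcal{F}_1$ and $\mathcal{F}_2$ are exactly $\int\Grad\vc{w}:\Grad\vv$ and $\int\Div\vc{w}\,\Div\vv$ respectively. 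The task is therefore to prove the discrete analogues
\[
\intTO{\Gradh\vuh:\Gradh\vvh}\approx\intTO{\big(\Curlh\vuh\cdot\Curl\vv+\Divh\vuh\,\Div\vv\big)},\qquad
\intTO{\Divh\vuh\,\Divh\vvh}\approx\intTO{\Divh\vuh\,\Div\vv},
\]
with errors of order $h^{(1-\alpha)/2}$.

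\textbf{Step 2: replace $\Gradh\vvh$ by $\Grad\vv$.} Since $\vvh=\PiQ\vv$, the discrete gradient $\Gradh\vvh$ on a cell $K$ is a central difference of cell averages of $\vv$. It is therefore a local average of $\Grad\vv$ over the patch of neighbouring cells, that is $\Gradh\vvh=\mathcal{M}_h\Grad\vv$, where $\mathcal{M}_h$ is a weighted averaging operator of width $2h$ that is $L^2$-stable. I would then write
\[
\int\Gradh\vuh:\Gradh\vvh-\int\Gradh\vuh:\Grad\vv=\int\Gradh\vuh:(\mathcal{M}_h-I)\Grad\vv
\]
and transfer the averaging $\mathcal{M}_h-I$ to $\Gradh\vuh$ by duality. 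Because $\Gradh\vuh$ is piecewise constant, $(\mathcal{M}_h^*-I)\Gradh\vuh$ is controlled by its jumps.

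\textbf{Step 3: the main obstacle.} Here $\Grad\vv$ is only $L^q$, since it involves $\vrh$ through $\Div\mathcal{A}^\nabla[\phi\vrh]=\phi\vrh$ and a Calder\'on--Zygmund operator. Consequently I cannot gain a factor $h$ from the smoothness of $\vv$. The gain must come from the velocity jumps instead. Using \eqref{ap3} together with Cauchy--Schwarz, a sum of the form
\[
\sum_\sigma\int_\sigma\frac{|\jump{\vuh}|}{h}\,h\,\abs{\mathcal{M}\Grad\vv}
\]
is bounded by
\[
\Big(h^{\alpha}\int_{\faces}\jump{\vuh}^2\Big)^{1/2}h^{-\alpha/2}\Big(h\int_{\faces}|\mathcal{M}\Grad\vv|^2\Big)^{1/2}.
\]
Using the trace-type estimate $h\int_{\faces}|g_h|^2\aleq\|g_h\|_{L^2}^2$ for piecewise constant $g_h$, this is $\aleq h^{(1-\alpha)/2}$ after accounting for the $h^{1/2}$ scaling. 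The same estimate underlies \eqref{cf1}, which already yields precisely the rate $h^{(1-\alpha)/2}$.

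\textbf{Step 4: pass to $\Curlh$ and $\Divh$.} Next I would replace $\int\Gradh\vuh:\Grad\vv$ by $\int(\Curlh\vuh\cdot\Curl\vv+\Divh\vuh\,\Div\vv)$. By the algebraic identity $\Gradh\vuh:\Grad\vv-\Curlh\vuh\cdot\Curl\vv-\Divh\vuh\,\Div\vv=\Gradh\vuh:\Grad^T\vv-\Divh\vuh\,\Div\vv$, this amounts to showing that the discrete null-Lagrangian $\Gradh\vuh:\Grad^T\vv-\Divh\vuh\,\Div\vv$ integrates to $O(h^{(1-\alpha)/2})$. For this I would apply \eqref{cf1} with $\bbT$ built from $\Grad\vv$ and exploit the fact that continuous derivatives commute. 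Where \eqref{cf1} would require $W^{1,2}$ regularity of $\Grad\vv$, which is unavailable, I would instead work at the level of the faces and use the jump bound directly, as in Step 3.

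\textbf{Step 5: conclude.} Finally, inserting \eqref{curl-d} and \eqref{div-d} gives the subtracted terms in $\mathcal{F}_1$ exactly. The estimate for $\mathcal{F}_2$ is the trace part of the same argument, so it follows by the same reasoning.
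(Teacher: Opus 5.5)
Your Step~3 fails, and the problem is not bookkeeping: velocity jumps cannot supply the gain.

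\begin{itemize}
\item The face sum you write is, up to where $\mathcal{M}_h\Grad\vv$ is sampled, $\intO{\abs{\Gradd\vuh}\,\abs{\mathcal{M}_h\Grad\vv}}$.
\item Apply Cauchy--Schwarz with \eqref{ap3}, i.e.\ $\int_0^T\int_{\facesint}\jump{\vuh}^2\ds\dt\aleq h^{-\alpha}$, and with your trace inequality, $\int_0^T\int_{\faces}\abs{\mathcal{M}_h\Grad\vv}^2\ds\dt\aleq h^{-1}$. This gives only $h^{-\alpha/2}h^{-1/2}=h^{-(1+\alpha)/2}$, which diverges. That is just \eqref{ap4}: the broken $H^1$ norm of $\vuh$ is unbounded.
\item Your displayed bound equals $h^{1/2}$ times the Cauchy--Schwarz bound, so it does not dominate the face sum. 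Even granting it, you would get $h^{-\alpha/2}$, not $h^{(1-\alpha)/2}$.
\end{itemize}

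The rate in \eqref{cf1} has a different source. There the velocity jumps are paired with $\bbT-\avs{\bbT_h}$, which is $O(h^{1/2})$ in $L^2(\faces)$ only because $\bbT\in W^{1,2}$. $\Grad\vv$ lacks this regularity. Transferring $\mathcal{M}_h-I$ onto $\Gradh\vuh$ by duality therefore puts the difference quotient on exactly the uncontrolled quantity. Step~4 has the same defect, since there you fall back on ``the jump bound directly, as in Step~3''.

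The missing idea is to keep the discretization error on the test-function side and take the smallness from the density. On that side the error multiplies $\norm{\Gradh\vuh}_{L^2}\le C$ from \eqref{ap1}. The density jumps obey the same bound \eqref{ap3}. $\vv$ does not allow a full factor $h$, but the fractional regularity it inherits from $\vrh$ gives exactly $h^{(1-\alpha)/2}$.

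The paper proceeds as follows.
\begin{itemize}
\item It does the curl--div splitting exactly at the discrete level, $\intO{\Gradh\vuh:\Gradh\vvh}=\intO{\Curlh\vuh\cdot\Curlh\vvh}+\intO{\Divh\vuh\,\Divh\vvh}$ (Lemmas~\ref{lem-op-comp} and \ref{lem-lap}). So no null-Lagrangian argument with a rough $\Grad\vv$ is needed.
\item Everything then reduces to $\norm{\Gradh\vuh}_{L^2}\big(\norm{\Gradh\vvh-\Grad\vv_L}_{L^2}+\norm{\Grad(\vv-\vv_L)}_{L^2}\big)$, with the regularized field $\vv_L=\psi\mathcal{A}^{\nabla}[\phi\PiL\vrh]$.
\item By \eqref{es-lift} and \eqref{op-lift-IBP-1},
\[
\norm{\Grad(\vv-\vv_L)}_{L^2}\aleq\norm{\vrh-\PiL\vrh}_{L^2},\qquad \norm{\Gradh\PiQ\vv_L-\Grad\vv_L}_{L^2}\aleq h\norm{\Grad^2\vv_L}_{L^2}\aleq h\norm{\Grad\PiL\vrh}_{L^2}.
\]
\item Both right-hand sides are $\aleq h\big(\int_{\facesint}\jump{\vrh}^2/h\,\ds\big)^{1/2}$, hence $\aleq h^{(1-\alpha)/2}$ after integration in time.
\end{itemize}

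Your Step~2 splitting would also work if you bounded $\norm{(\mathcal{M}_h-I)\Grad\vv}_{L^2}=\norm{\Gradh\vvh-\Grad\vv}_{L^2}$ directly in this way, or via translation estimates for $\vrh$, instead of dualizing.
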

\begin{proof}
%{\cgrey Recalling Lemma \ref{lem-lap} we know that
%\begin{align*}
%\intTO{ \Gradh \vc{u}_h : \Gradh \vvh}
%& = \intTO{ \left( \Curlh \vuh \cdot \Curl \vv_L+\Divh \vuh \cdot \Div \vv_L \right)} + \int_{0}^{T}  E(\vuh,\vvh) \dt. 
%\end{align*}
%where $E(\vuh, \vvh) = \intO{ \Gradh \vuh : \Gradh \vvh} - \intO{\Curlh \vuh \cdot \Curl \vv_L} - \intO{\Divh \vuh \cdot \Div \vv_L}$
%satisfies 
%\begin{align*}
%E(\vuh,\vvh)  \leq \norm{\Gradh \vuh}_{L^2(\Omega)} \norm{\Gradh \vvh - \Grad \vv_L}_{L^2(\Omega)} .
%\end{align*}
%
%Using the formulae \eqref{curl-d} and \eqref{div-d} that 
%\begin{align*}
%\intTO{ \Gradh \vc{u}_h : \Gradh \vvh } 
%& = \intTO{ \left( \Curlh \vuh \cdot \Curl \vv+\Divh \vuh \cdot \Div \vv \right)} +  \mathcal{F}_1(\vr_h) \\ 
%& = \intTO{ \left(  \Curlh \vc{u}_h \cdot \left(\Grad \psi \times  \mathcal{A}^{\nabla}[\phi \vr_h]\right)  + \Divh \vc{u}_h  \Grad \psi \cdot  \mathcal{A}^{\nabla}[\phi \vr_h]  \right)} \\
%& + \intTO{ \psi \phi \vr_h \Divh \vc{u}_h  }+ \mathcal{F}_1(\vr_h),
%\end{align*}
%where we have denoted 
%\begin{align*}
%\mathcal{F}_1(\vr_h) & = \int_{0}^{T}  E(\vuh,\vvh) \dt + \intTO{ \Big( \Curlh \vc{u}_h  \cdot \Curl (\vv_L - \vv)  +  \Divh \vc{u}_h  \cdot \Div (\vv_L - \vv)  \Big)}. 
%\end{align*}}
Thanks to the identities \eqref{curl-d} and \eqref{div-d} we can rewrite $\mathcal{F}_1(\vr_h)$ in the form  
\begin{align*}
\mathcal{F}_1(\vr_h)  
& = \intTO{ \Gradh \vc{u}_h : \Gradh \vvh } - \intTO{ \left( \Curlh \vuh \cdot \Curl \vv+\Divh \vuh \cdot \Div \vv \right)}  \\ 
&= \int_{0}^{T}  E(\vuh,\vvh) \dt + \intTO{ \Big( \Curlh \vc{u}_h  \cdot \Curl (\vv_L - \vv)  +  \Divh \vc{u}_h  \cdot \Div (\vv_L - \vv)  \Big)},  
\end{align*}
where, by virtue of Lemma \ref{lem-lap} in Appendix \ref{sec-diffop}, 
\begin{align}
	E(\vuh, \vvh) = \intO{ \Gradh \vuh : \Gradh \vvh} - \intO{\Curlh \vuh \cdot \Curl \vv_L} - \intO{\Divh \vuh \cdot \Div \vv_L}.
\end{align}

Next, we estimate the integrals on the right--hand side of the above inequality. 
First we claim
\[
\abs{\int_{0}^{T}  E(\vuh,\vvh) \dt} \leq \norm{\Gradh \vuh}_{L^2((0,T)\times\Omega)} \norm{\Gradh \vvh - \Grad \vv_L}_{L^2((0,T)\times\Omega)} \aleq h^{(1-\alpha)/2}
\]
since
\begin{align*}
& \norm{\Gradh \vvh - \Grad \vv_L}_{L^2(\Omega)} \leq  \norm{\Gradh \vvh - \Gradh \PiQ \vv_L}_{L^2(\Omega)} +  \norm{\Gradh \PiQ \vv_L - \Grad \vv_L}_{L^2(\Omega)} \\
& =  \norm{\Gradh \PiQ (\vv -  \vv_L)}_{L^2(\Omega)} +\norm{\Gradh \PiQ \vv_L - \Grad \vv_L}_{L^2(\Omega)} \\
& \aleq \norm{\Grad(\vv - \vv_L)}_{L^{2}(\Omega)} + h \norm{\Grad^2 \vv_L}_{L^{2}(\Omega)} \\
& \aleq \norm{\vrh - \PiL\vrh}_{L^{2}(\Omega)} + h \norm{\Grad \PiL\vrh}_{L^{2}(\Omega)} \aleq h \left( \intfacesint{\frac{\jump{\vrh}^2}{h}} \right)^{1/2}.
\end{align*}
Here, we have used estimates \eqref{es-lift}, \eqref{op-lift-IBP-1} in the last two inequalities. The most right integral is controlled by \eqref{ap3} 
as required. 

The second term can be handled as
\begin{align*}
&\abs{\intTO{ \Big( \Curlh \vc{u}_h  \cdot \Curl (\vv_L - \vv)  +  \Divh \vc{u}_h  \cdot \Div (\vv_L - \vv)  \Big)} } \\
&\aleq  \norm{\Gradh \vuh}_{L^2((0,T)\times\Omega)} \norm{\Grad( \vv -  \vv_L)}_{L^2((0,T)\times\Omega)} \aleq \norm{\vrh - \PiL\vrh}_{L^{2}([0,T]\times\Omega)}  \aleq h^{(1-\alpha)/2}.
\end{align*}
 The estimate of $\mathcal{F}_2(\vr_h)$ can be done in a similar manner.
\end{proof}

\begin{Lemma}\label{lem-bS}
%Let  $\vvh, \Gradh \vvh$ extended with $\jump{\vvh}|_{\facesext}=0, \jump{\Gradh \vvh}\cdot \vc{n}|_{\facesext}=0$. Denote
Let
\begin{align*}
\mathcal{F}_3(\vr_h) = &\intTO{ \bS_h : \Gradh \vvh }  - 
 \mu \intTO{  \Curlh \vc{u}_h \cdot \left(\Grad \psi \times  \mathcal{A}^{\nabla}[\phi \vr_h]\right)  } \\
& -(2\mu+\lambda) \intTO{  \Divh \vc{u}_h  \Grad \psi \cdot  \mathcal{A}^{\nabla}[\phi \vr_h] } 
%\\& 
-(2\mu+\lambda) \intTO{ \psi \phi \vr_h \Divh \vc{u}_h } . 
\end{align*}

Under the hypotheses of Theorem \ref{thm_main}, we have 
\[ 
|\mathcal{F}_3|  \leq C(\Lambda) h^{(1-\alpha)/2}.\]
\end{Lemma}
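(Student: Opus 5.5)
The plan is to reduce Lemma~\ref{lem-bS} to Lemma~\ref{lem-vis} by expanding the discrete viscous stress. Since $\bS_h = 2\mu \bD_h \vuh + \lambda \Divh \vuh \bI$ and $\bD_h\vuh = (\Gradh\vuh + \Gradh^T\vuh)/2$, we have
\[
\bS_h : \Gradh \vvh = \mu \Gradh \vuh : \Gradh \vvh + \mu \Gradh^T \vuh : \Gradh \vvh + \lambda \Divh \vuh \, \Divh \vvh .
\]
The first and last terms are exactly the leading terms of $\mathcal{F}_1$ and $\mathcal{F}_2$. The middle term should be converted into $\Divh\vuh\,\Divh\vvh$ up to a small error. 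Grouping the pieces, we aim to obtain
\[
\mathcal{F}_3 = \mu \mathcal{F}_1 + (\mu+\lambda)\mathcal{F}_2 + \mu\, \mathcal{R}, \qquad \mathcal{R} := \intTO{ \Gradh^T \vuh : \Gradh \vvh } - \intTO{ \Divh \vuh \,\Divh \vvh }.
\]
A direct check of the coefficients in front of $\Divh\vuh\,\Grad\psi\cdot\mathcal{A}^\nabla[\phi\vrh]$ and $\psi\phi\vrh\Divh\vuh$ gives $\mu + (\mu+\lambda) = 2\mu+\lambda$, which matches the definition of $\mathcal{F}_3$. The curl term carries the coefficient $\mu$ and comes only from $\mathcal{F}_1$. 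With this identity in hand, Lemma~\ref{lem-vis} bounds the first two contributions by $C(\Lambda)h^{(1-\alpha)/2}$, so it remains to control $\mathcal{R}$.

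For $\mathcal{R}$, I would follow the proof of Lemma~\ref{lem-vis} and pass through the continuous piecewise linear lifting $\vv_L = \psi\mathcal{A}^\nabla[\phi\PiL\vrh]$. First, replace $\Gradh\vvh$ by $\Grad \vv_L$ in both integrals. The resulting error is bounded by $\norm{\Gradh\vuh}_{L^2}\norm{\Gradh\vvh - \Grad\vv_L}_{L^2}$, and the estimate already derived there, based on \eqref{es-lift}, \eqref{op-lift-IBP-1} and \eqref{ap3}, makes this $\aleq h^{(1-\alpha)/2}$. What remains is
\[
\intTO{ \Big(\Gradh^T\vuh : \Grad\vv_L - \Divh\vuh\,\Div \vv_L\Big)} .
\]
For smooth fields this vanishes by the identity $\int \partial_j u_i \partial_i v_j = \int \partial_i u_i \partial_j v_j$, valid for compactly supported $\vv_L$. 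At the discrete level, I would use the summation-by-parts structure of Lemma~\ref{lem-lap} in Appendix~\ref{sec-diffop}, the same mechanism that yields $\Gradh\vuh:\Gradh\vvh \approx \Curlh\vuh\cdot\Curl\vv_L + \Divh\vuh\,\Div\vv_L$. Applied to the transpose pairing, it should give the analogous identity with error $E'$ satisfying $|E'|\le \norm{\Gradh\vuh}_{L^2}\norm{\Gradh\vvh-\Grad\vv_L}_{L^2}$. Alternatively, I would write $\Gradh^T\vuh:\Gradh\vvh = \Gradh\vuh:\Gradh\vvh - \Curlh\vuh\cdot\Curlh\vvh$ pointwise, which uses only the algebraic definition of the discrete curl as the antisymmetric part. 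This reduces $\mathcal{R}$ to the combination already handled in Lemma~\ref{lem-vis}, namely $E(\vuh,\vvh)$ together with the curl and divergence terms involving $\vv_L - \vv$.

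The main obstacle is this commutation step: showing that $\Gradh^T\vuh:\Gradh\vvh$ and $\Divh\vuh\,\Divh\vvh$ differ only by an $O(h^{(1-\alpha)/2})$ error even though $\vuh$ is merely piecewise constant, and $\Gradd\vuh$ blows up as in \eqref{ap4}. The key point is that the discrete integration by parts must move the jump differences onto $\vvh$ and not onto $\vuh$. The smoothness of $\vv_L$, with $\norm{\Grad^2\vv_L}_{L^2}\aleq \norm{\Grad\PiL\vrh}_{L^2}$, then absorbs the $h$ factor. I would try the pointwise algebraic route first and fall back on the Lemma~\ref{lem-lap} structure if the discrete curl is not defined compatibly.
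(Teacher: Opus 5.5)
Your reduction $\mathcal{F}_3=\mu\mathcal{F}_1+(\mu+\lambda)\mathcal{F}_2+\mu\mathcal{R}$ is exactly the paper's. Your ``pointwise algebraic route'' also closes, but it handles $\mathcal{R}$ differently from the paper.

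\textbf{The paper's route.} The paper shows $\mathcal{R}=0$ exactly, working only with the discrete pair $(\vuh,\vvh)$. It uses \eqref{InByPa-1}, with no boundary terms since $\vvh$ vanishes near $\partial\Omega$, together with the discrete commutation $\Divh\Gradh^T\vvh=\Gradh\Divh\vvh$ of Lemma~\ref{lem-op-comp}:
\[
\intO{\Gradh\vuh:\Gradh^T\vvh}=-\intO{\vuh\cdot\Divh\Gradh^T\vvh}=-\intO{\vuh\cdot\Gradh\Divh\vvh}=\intO{\Divh\vuh\,\Divh\vvh}.
\]
This gives \eqref{coerc}. It is precisely your ``smooth'' identity, applied to the discrete $\vvh$ rather than to $\vv_L$.

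\textbf{Your route.} You use that $\Curlh$ is, by its definition, the axial vector of the antisymmetric part of $\Gradh$. Hence, cellwise, $\Gradh^T\vuh:\Gradh\vvh=\Gradh\vuh:\Gradh\vvh-\Curlh\vuh\cdot\Curlh\vvh$. Using \eqref{curl-d}, this gives
\[
\mathcal{R}=\mathcal{F}_1-\mathcal{F}_2+\intTO{\Curlh\vuh\cdot\big(\Curl\vv-\Curlh\vvh\big)}.
\]
The last term is bounded by $\|\Gradh\vuh\|_{L^2}\big(\|\Gradh\vvh-\Grad\vv_L\|_{L^2}+\|\Grad(\vv_L-\vv)\|_{L^2}\big)\aleq h^{(1-\alpha)/2}$, by the estimates in the proof of Lemma~\ref{lem-vis}. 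Note that the extra term involves $\Gradh\vvh-\Grad\vv$, not ``$\vv_L-\vv$'' as you wrote, but the bounds are the same.

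\textbf{Comparison.} Your route needs only linear algebra plus estimates already proved. The paper's route gives an exact identity, which it reuses for the coercivity formula \eqref{BB10}. Underneath, both rest on the same discrete structure. Combining your pointwise identity with the exact relation $\intO{\Gradh\vuh:\Gradh\vvh}=\intO{\Curlh\vuh\cdot\Curlh\vvh}+\intO{\Divh\vuh\,\Divh\vvh}$, proved inside Lemma~\ref{lem-lap}, already yields $\mathcal{R}=0$. So the ``main obstacle'' you identify does not actually exist.

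\textbf{Drop your first route.} Once $\Gradh\vvh$ is replaced by $\Grad\vv_L$, the mixed pairing $\intTO{\Gradh^T\vuh:\Grad\vv_L-\Divh\vuh\,\Div\vv_L}$ cannot be handled by a compatibility estimate of type \eqref{cf1}. That estimate would cost $\|\vv_L\|_{L^2(0,T;W^{2,2})}$, for which only the bound $h^{-(1+\alpha)/2}$ is available. The net order would be $h^{-\alpha}$, which does not vanish for $\alpha\geq 0$. A discrete summation by parts only brings $\Gradh\vvh$ back, so that route is circular.
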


\begin{proof}
With $\bS_h = 2\mu \bD_h \vuh + \lambda \Divh\vuh \bI$, we have
\begin{align*}
\bS_h : \Gradh \vvh & = \mu \Gradh \vuh : \Gradh \vvh + \mu \Gradh^T \vuh : \Gradh \vvh + \lambda \Divh \vuh \, \Divh \vvh\\
&= \mu \Gradh \vuh : \Gradh \vvh + \mu \Gradh \vuh : \Gradh^T \vvh + \lambda \Divh \vuh \, \Divh \vvh.
\end{align*}
Thanks to Lemma~\ref{lem-op-comp} and the integration-by-parts formula \eqref{InByPa-1} in Appendix \ref{sec-diffop}, we reformulate the second term on the right-hand-side as
\begin{align*}
\intO{\Gradh \vuh : \Gradh^T \vvh} = - \intO{ \vuh \cdot \Divh\Gradh^T \vvh} = - \intO{ \vuh \cdot \Gradh \Divh \vvh} = \intO{ \Divh\vuh \, \Divh \vvh}, 
\end{align*}
which leads to 
\begin{align}
\intTO{ \bS_h : \Gradh \vvh } = \mu \intTO{ \Gradh \vuh : \Gradh \vvh } + \left( \mu + \lambda \right) \intTO{ \Divh \vuh \, \Divh \vvh }.
\label{coerc}
\end{align}

Using Lemma~\ref{lem-vis} we finish the proof since $\mathcal{F}_3(\vr_h) = \mu \mathcal{F}_1(\vr_h) + (\mu+\lambda)\mathcal{F}_2(\vr_h)$.
\end{proof}

\begin{Lemma}\label{lem-p}
Let us set 
\begin{align*}
\mathcal{F}_4(\vrh) = \intTO{ p_h \Divh \vvh } - \intTO{ p_h \Grad \psi \cdot \Big(  \mathcal{A}^{\nabla}[\phi \vr_h]) \Big)  } -  \intTO{ \psi \phi \vr_h p_h } . 
\end{align*}

Under the hypotheses of Theorem \ref{thm_main}, there holds
\[ 
|\mathcal{F}_4(\vr_h)| \leq C(\Lambda)\left(  h^{(1-\alpha)/2} + h^{1/2} \right).
\]
\end{Lemma}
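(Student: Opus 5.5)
We compare $\Divh \vvh$ with $\Div \vv$, using the same splitting as in Lemma~\ref{lem-vis}. The identity \eqref{div-d} gives
\[
\Div \vv = \Grad \psi \cdot \mathcal{A}^{\nabla}[\phi \vrh] + \psi \phi \vrh .
\]
Hence
\[
\mathcal{F}_4(\vrh) = \intTO{ p_h \left( \Divh \vvh - \Div \vv \right) } .
\]
Since $p_h = \vrh \vth$ is uniformly bounded by \eqref{HP1}, it suffices to control $\Divh \vvh - \Div \vv$ in a weak norm. We insert the auxiliary field $\vv_L = \psi \mathcal{A}^{\nabla}[\phi \PiL \vrh]$ and split
\[
\Divh \vvh - \Div \vv = \Divh \PiQ(\vv - \vv_L) + \big( \Divh \PiQ \vv_L - \Div \vv_L \big) + \Div(\vv_L - \vv).
\]
The first and third terms are handled exactly as in Lemma~\ref{lem-vis}. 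We use the $L^2$ stability of $\Gradh \PiQ$ with respect to the $H^1$-seminorm, together with \eqref{op-lift-IBP-1} and \eqref{es-lift}. This gives
\[
\norm{\Divh \PiQ(\vv - \vv_L)}_{L^2} + \norm{\Div(\vv - \vv_L)}_{L^2} \aleq \norm{\vrh - \PiL \vrh}_{L^2} \aleq h \Big( \intfacesint{\jump{\vrh}^2/h} \Big)^{1/2} .
\]
Integrating in time and using \eqref{ap3} with weight $h^\alpha$ yields a bound $h^{(1-\alpha)/2}$. Multiplying by $\norm{p_h}_{L^2}$, which is bounded, these two contributions are $\aleq h^{(1-\alpha)/2}$.

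The middle term is where the main obstacle lies. In Lemma~\ref{lem-vis} it was estimated by $h\norm{\Grad^2 \vv_L}_{L^2}$, which is paired there with $\norm{\Gradh \vuh}_{L^2}$. Here it gives
\[
\norm{\Divh \PiQ \vv_L - \Div \vv_L}_{L^2} \aleq h \norm{\Grad^2 \vv_L}_{L^2} \aleq h \norm{\PiL \vrh}_{W^{1,2}} \aleq h \Big( \intfacesint{\jump{\vrh}^2/h} \Big)^{1/2}.
\]
Here we use \eqref{op-lift-IBP-1} for the derivative of $\phi\PiL\vrh$ and \eqref{es-lift}, with the smooth factors $\psi, \phi$ contributing only lower-order terms. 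After time integration this is again $\aleq h^{(1-\alpha)/2}$.

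If the step $\norm{\Grad^2\vv_L}_{L^2}\aleq \norm{\PiL\vrh}_{W^{1,2}}$ is delicate near $\partial\Omega$, we would instead avoid second derivatives of $\vv_L$. In that case we use the bound
\[
\norm{\Divh \PiQ \vv_L - \Div \vv_L}_{L^2} \aleq \norm{\Grad \vv_L}_{L^2} \aleq 1 ,
\]
combined with the fact that on each cell $\Divh \PiQ \vv_L$ is a face-average quantity. The commutator is then estimated via the cell-versus-face average discrepancy, which is $O(h^{1/2})$ by a trace inequality on each cell applied to the $W^{1,2}$-function $\Grad \vv_L$. This mechanism is what we expect produces the additional $h^{1/2}$ in the statement.

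Collecting the three pieces gives $|\mathcal{F}_4(\vrh)| \leq C(\Lambda)\left(h^{(1-\alpha)/2} + h^{1/2}\right)$. The constants depend only on $\Lambda$ and on $\psi, \phi$.
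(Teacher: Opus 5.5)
Your main argument is correct, but it takes a different route from the paper.

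\textbf{The paper's route.} The paper never estimates $\Divh\vvh-\Div\vv$ in a strong norm. It writes $\int p_h(\Divh\vvh-\Div\vv)$ cell by cell as a sum of face contributions. Summing by parts gives $\int_0^T\intfacesint{\jump{p_h}\,(\avs{\vvh}-\vv)\cdot\vn}\dt$, and Cauchy--Schwarz then splits this into two factors:
\begin{itemize}
\item the jumps of $p_h=\vrh\vth$, controlled by \eqref{ap3}, which give the factor $h^{-\alpha/2}$;
\item the face error $\avs{\vvh}-\vv$, which is $O(h^{1/2}\norm{\Grad\vv}_{L^2})$.
\end{itemize}
So only $\norm{\Grad\vv}_{L^2}\aleq\norm{\vrh}_{L^2}$ is needed: no lifting $\PiL$ and no second derivatives.

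\textbf{Your route.} You use $p_h$ only as a bounded function and prove the stronger bound $\norm{\Divh\vvh-\Div\vv}_{L^2((0,T)\times\Omega)}\aleq h^{(1-\alpha)/2}+h$. This is essentially the trace of the estimate for $\Gradh\vvh-\Grad\vv$ already contained in the proof of Lemma~\ref{lem-vis}.
\begin{itemize}
\item The cost is the $H^2$ bound for $\vv_L$. This comes from Calder\'on--Zygmund for $\mathcal{A}^{\nabla}$ applied to $\Grad(\phi\PiL\vrh)$, together with \eqref{es-lift}.
\item The gain is that no discrete regularity of $p_h$ is used, so the argument works for any uniformly bounded multiplier.
\item You also get $h$ where the statement has $h^{1/2}$; for $\alpha<0$ this is a genuine improvement.
\end{itemize}
One small point: $h\norm{\PiL\vrh}_{W^{1,2}}$ also contains $h\norm{\PiL\vrh}_{L^2}$ and the $\Grad\psi$, $\Grad^2\psi$ terms. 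These give an extra $O(h)$ rather than being absorbed into the jump term, which is harmless.

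\textbf{The fallback paragraph.} You should drop it.
\begin{itemize}
\item There is no boundary difficulty. Since $\psi(t,\cdot)$ and $\phi$ are compactly supported in $\Omega$, $\vv_L$ vanishes near $\partial\Omega$ and $\phi\PiL\vrh\in W^{1,2}$ has compact support. Hence $\norm{\Grad^2\mathcal{A}^{\nabla}[\phi\PiL\vrh]}_{L^2}\aleq\norm{\Grad(\phi\PiL\vrh)}_{L^2}$ holds without any boundary issue.
\item As written, the fallback does not work anyway. The bound $\aleq\norm{\Grad\vv_L}_{L^2}\aleq 1$ carries no rate. A trace inequality applied to $\Grad\vv_L$ reintroduces exactly the second derivatives you wanted to avoid.
\end{itemize}
The $h^{1/2}$ in the statement arises in the paper's face-based bookkeeping; your main argument does not need it.
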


\begin{proof}
Thanks to the identities \eqref{curl-d} and \eqref{div-d}, direct calculations give
\begin{align*}
\mathcal{F}_4(\vrh) & = \int_{0}^{T}\intO{ p_h \Divh \vvh } \dt-  \int_{0}^{T} \intO{ p_h \Div \vv } \dt\\
%& = \sum_{K\in \mesh} \sum_{\sigma \in \facesK} |\sigma| p_h \avs{\vvh} \cdot \vn - \sum_{K\in \mesh} \sum_{\sigma \in \facesK} |\sigma| p_h \PiF \vv \cdot \vn \\
& = \int_{0}^{T} \sum_{K\in \mesh} \sum_{\sigma \in \facesK} |\sigma| p_h \big( \avs{\vvh} - \vv \big) \cdot \vn \dt \\
& = \int_{0}^{T} \intfacesint{ \jump{p_h} \big( \Pi_{W^{(i)}} \vvh -  \vv \big)  \cdot \vn} \dt.
\end{align*}
Consequently, 
\begin{align*}
\abs{\mathcal{F}_4(\vrh)} 
&\aleq \left(\int_{0}^{T}\intfacesint{ \jump{p_h}^2}\dt\right)^{1/2} \left(\int_{0}^{T}\intfacesint{ \abs{ \Pi_{W^{(i)}} \vvh -  \vv }^2}\dt\right)^{1/2} \\
&\aleq h^{-\alpha/2} h^{1/2}  \norm{\Grad \vv}_{L^2} + h^{1/2}  \norm{\Grad \vv}_{L^2} \aleq \big(h^{1/2} + h^{(1-\alpha)/2} \big)  \norm{\vrh}_{L^2}  \aleq h^{(1-\alpha)/2} + h^{1/2},
\end{align*}
which completes the proof of Lemma~\ref{lem-p}.
\end{proof}

\begin{Lemma}\label{lem-f}
Let 
\begin{align}
&  \intTO{ D_t  (\vrh  \vuh ) \cdot \vvh } - \int_{0}^{T} \intfacesint{ {\bf F}_h^{\alpha}  (\vrh \vuh ,\vuh ) \cdot \jump{\vvh} }\dt \br
=\ & \intTOB{    \vrh \vuh \phi \cdot \Grad \left(  \mathcal{A}^{\rm div} [\vrh  \vuh \psi]   \right) -  \vrh\vuh \otimes \vuh  \psi  : \Grad ( \mathcal{A}^{\nabla}[\phi \vr_h])} \br
+\ & \intTOB{ \vrh \vuh \mathcal{A}^{\rm div} [\vrh  \vuh \psi]  \cdot \Grad \phi -  \vrh\vuh \otimes \vuh \cdot \mathcal{A}^{\nabla}[\phi \vr_h]  \cdot \Grad \psi} \br
-\ &  \intTO{   (\vrh  \vuh )^{\vartriangle} \cdot  \mathcal{A}^{\nabla}[\phi \vr_h] \, D_t \psi  } +\mathcal{F}_5(\vrh,\vrh \vuh).
\end{align}

Under the hypotheses of Theorem \ref{thm_main}, there holds
\[
\abs{\mathcal{F}_5(\vrh,\vrh \vuh)} \leq C(\Lambda) \left(  h^{(1-\alpha)/2}+h^{(1+\alpha)/2} \right).
\]
\end{Lemma}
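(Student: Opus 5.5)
We treat the two terms on the left-hand side separately: the discrete time derivative term and the upwind convective flux term. In each we replace the discrete test function $\vvh=\PiQ\vv$ by its continuous counterpart $\vv=\psi\mathcal{A}^\nabla[\phi\vrh]$, and then use the discrete continuity equation \eqref{VFV_D} to convert the time derivative of $\vrh$ hidden inside $\vv$ into a flux term. All residuals are collected into $\mathcal{F}_5$.

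\emph{Time derivative.} We first apply a discrete product rule in time,
\[
D_t(\vrh\vuh)\cdot\vv = D_t\big((\vrh\vuh)\cdot\vv\big) - (\vrh\vuh)^{\vartriangle}\cdot D_t\vv ,
\]
where $(\cdot)^{\vartriangle}$ denotes the appropriate time shift. Since $\psi$ is compactly supported in $(0,T)$, the total difference sums to zero. We then split $D_t\vv = D_t\psi\,\mathcal{A}^\nabla[\phi\vrh] + \psi^{\triangleleft}\mathcal{A}^\nabla[\phi D_t\vrh]$. The first piece gives exactly the $-(\vrh\vuh)^{\vartriangle}\cdot\mathcal{A}^\nabla[\phi\vrh]D_t\psi$ term of the statement. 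For the second piece we use \eqref{op-lift-IBP} to move $\mathcal{A}$ onto the momentum, obtaining
\[
-\intO{\mathcal{A}^{\rm div}[(\vrh\vuh)^{\vartriangle}\psi^{\triangleleft}]\,\phi\, D_t\vrh}.
\]
We then test the continuity scheme \eqref{VFV_D} with $\phi_h=\PiQ\big(\phi\,\mathcal{A}^{\rm div}[\vrh\vuh\psi]\big)$. This turns the expression into $\int_{\facesint} \Fup(\vrh,\vuh)\jump{\phi_h}$. To compare it with $\int \vrh\vuh\cdot\Grad(\phi\mathcal{A}^{\rm div}[\cdot])$ we use the consistency estimate from the proof of Lemma~\ref{Lcons1}, in the version for test functions in $W^{1,2}$ (Appendix \ref{ap_css}). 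The product rule for $\Grad(\phi\mathcal{A}^{\rm div})$ then gives the two terms $\vrh\vuh\phi\cdot\Grad\mathcal{A}^{\rm div}[\vrh\vuh\psi]$ and $\vrh\vuh\,\mathcal{A}^{\rm div}[\vrh\vuh\psi]\cdot\Grad\phi$. The time-shift mismatches ($\vartriangle$ versus unshifted) are of size $\TS|D_t\cdot|$. Their contribution is bounded using \eqref{ap2} together with the $L^2\to W^{1,2}$ bound \eqref{op-lift-IBP-1}, giving $O(h^{1/2})$.

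\emph{Convective term.} Here we proceed as in the consistency proof for the momentum equation (Lemma~\ref{Lcons2}). We write $\Fup(\vrh\vuh,\vuh)=\Up[\vrh\vuh,\vuh]-h^\alpha\jump{\vrh\vuh}$ and rearrange the upwind flux sum into $\intO{\vrh\vuh\otimes\vuh:\Grad\vv}$ plus remainders. The remainders have the form $\jump{\vrh\vuh}\,|\avs{\vuh}\cdot\vn|\,(\vv-\PiF\vv)$ and $h^\alpha\jump{\vrh\vuh}\jump{\vvh}$. By Cauchy--Schwarz with \eqref{ap3} (together with Corollary~\ref{Cor1} for the product), and the bounds $|\jump{\vvh}|+|\vv-\avs{\vvh}|\aleq h\,|\Grad\vv|$ locally, we obtain $O(h^{1/2})$ and $O(h^{(1+\alpha)/2})$ respectively. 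This step needs only $\Grad\vv\in L^2$, which holds since $\|\Grad\vv\|_{L^2}\aleq\|\vrh\|_{L^2}$. Expanding $\Grad\vv=\psi\Grad\mathcal{A}^\nabla[\phi\vrh]+\mathcal{A}^\nabla[\phi\vrh]\otimes\Grad\psi$ gives the remaining two terms of the statement.

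\emph{Main obstacle.} The delicate step is the transfer through the continuity equation. The test function $\phi\,\mathcal{A}^{\rm div}[\vrh\vuh\psi]$ is only $W^{1,2}$ in space, not $C^2$, and it is itself $h$-dependent. So we cannot quote \eqref{con-e1} directly. Instead we will redo the flux-consistency estimate with $\PiQ$ of a $W^{1,2}$ function. Jumps of $\PiQ f$ and differences $f-\PiF f$ are controlled by $h\|\Grad f\|_{L^2}$ via trace/Poincaré inequalities. The artificial diffusion and upwind residuals are then bounded with \eqref{ap3}, yielding $h^{(1-\alpha)/2}$ and $h^{(1+\alpha)/2}$. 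This requires the $C^{0,\nu}$ and $W^{1,q}$ bounds \eqref{op-lift-IBP-1} and uniform boundedness \eqref{HP1}. Combining all residuals gives $|\mathcal{F}_5|\le C(\Lambda)(h^{(1-\alpha)/2}+h^{(1+\alpha)/2})$.
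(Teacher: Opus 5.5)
Your proposal is correct and follows essentially the same route as the paper. The convective term is reduced to $\int_0^T\!\int_\Omega \vrh\vuh\otimes\vuh:\Grad\vv$ via the $W^{1,2}$ flux-consistency estimate \eqref{es-conv}. The time-derivative term is handled by discrete summation by parts, the product rule for $D_t(\psi\,\mathcal{A}^{\nabla}[\phi\vrh])$, the identity \eqref{op-lift-IBP}, and the discrete continuity equation tested with $\PiQ(\phi\,\mathcal{A}^{\rm div}[\cdot])$, with the time-shift residual being the paper's $K_3=O(\TS^{1/2})$.

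Two small bookkeeping remarks, neither of which affects your final estimate:
\begin{itemize}
\item The central part of the convective residual involves unweighted jumps $\jump{\vrh\vuh}$ paired with $\vv-\PiW\vvh$. Via the $h^\alpha$-weight in \eqref{ap3} it gives only $h^{(1-\alpha)/2}$, not $h^{1/2}$.
\item The shift $(\vrh\vuh\psi)^{\vartriangle}\mapsto\vrh\vuh\psi$ must be estimated only after $D_t\vrh$ has been put into flux (negative-norm) form, as the paper does. Pairing two $L^2$ bounds from \eqref{ap2} directly would give only $O(1)$.
\end{itemize}
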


\begin{proof}
Let us start by recalling the consistency error of the convective terms \eqref{es-conv} in Appendix \ref{ap_css}, specifically,
\begin{equation}\label{con-err-flux-1}
\begin{aligned}
& E_F(r_h, \phi) = \int_{0}^{T} \intfacesint{ {\bf F}_h^{\alpha}  (r_h ,\vuh ) \cdot \jump{\PiQ \phi} } \dt - \intTO{r_h \vuh  \cdot \Grad \phi },\\
& \mbox{with } \ \abs{E_F(r_h, \phi)} \leq C(\Lambda) \left[ h^{(1-\alpha)/2}+h^{(1+\alpha)/2} \right] \norm{\Grad \phi}_{L^2((0,T)\times\Omega; \R^{d})}.  
\end{aligned}
\end{equation}
Then we have
\begin{align}\label{con-d-f1}
 \int_{0}^{T} \intfacesint{ {\bf F}_h^{\alpha}  (\vrh \vuh ,\vuh ) \cdot \jump{\vvh} } \dt = \intTO{\vrh\vuh \otimes \vuh  : \Grad \vv } + E_F(\vrh \vuh, \vv).
\end{align}

\medskip
Next, we reformulate $\intTO{ D_t  (\vrh  \vuh ) \cdot \vvh }$.  
Thanks to the chain rule
\[
D_t(a_h b_h)  = b_h D_t a_h + a_h^{\vartriangle} D_t b_h,  \quad a_h^{\vartriangle} = a_h(t-\TS), \quad a_h,b_h \in Q_h,
\]
we have
\begin{align*}
&  \intTO{ D_t  (\vrh  \vuh ) \cdot \vvh }   = \intTO{ D_t  (\vrh  \vuh ) \cdot \vv } 
= - \intTO{   (\vrh  \vuh )^{\vartriangle} \cdot D_t \vv }  \\
& = - \intTO{   (\vrh  \vuh \psi)^{\vartriangle} \cdot D_t \left( \mathcal{A}^{\nabla}[\phi \vr_h] \right) } - \intTO{   (\vrh  \vuh )^{\vartriangle} \cdot  \mathcal{A}^{\nabla}[\phi \vr_h] \, D_t \psi  }  \\
& = - \intTO{   (\vrh  \vuh \psi)^{\vartriangle} \cdot  \left( \mathcal{A}^{\nabla}[\phi D_t \vr_h] \right) } - K_1  \\
& =  \intTO{  \mathcal{A}^{\rm div} [ (\vrh  \vuh \psi)^{\vartriangle} ] \,\phi D_t \vr_h } - K_1  =  \intTO{  \PiQ[\mathcal{A}^{\rm div} [ (\vrh  \vuh \psi)^{\vartriangle} ] \,\phi ] \, D_t \vr_h } - K_1 
\end{align*}
with 
\[
K_1 =  \intTO{   (\vrh  \vuh )^{\vartriangle} \cdot  \mathcal{A}^{\nabla}[\phi \vr_h] \, D_t \psi  }.
\]
Note that we have used \eqref{op-lift-IBP} for the last equality. 
Further, we substitute the density equation \eqref{VFV} into the above equality and obtain
\begin{align}\label{con-d-f2}
&\intTO{ D_t  (\vrh  \vuh ) \cdot \vvh } + K_1 =  \intTO{  \PiQ[\mathcal{A}^{\rm div} [ (\vrh  \vuh \psi)^{\vartriangle} ] \,\phi ] \, D_t \vr_h }  \br
%& =   -\intTO{ \phi  \mathcal{A}^{\rm div} [ (\vrh  \vuh \psi)^{\vartriangle} ]  \Divmesh {\bf F}_h^{\alpha} (\vrh,\vuh )  } \br
%& =   -\intTO{ \PiQ\left( \phi  \mathcal{A}^{\rm div} [ (\vrh  \vuh \psi)^{\vartriangle} ] \right) \Divmesh {\bf F}_h^{\alpha} (\vrh,\vuh )  } \br
& =   \int_{0}^{T} \intfacesint{ {\bf F}_h^{\alpha}  (\vrh,\vuh ) \cdot \jump{\PiQ\left( \phi  \mathcal{A}^{\rm div} [ (\vrh  \vuh \psi)^{\vartriangle} ] \right)} } \dt \br
&=  \intTO{   \vrh \vuh \cdot \Grad \left( \phi \mathcal{A}^{\rm div} [ (\vrh  \vuh \psi)^{\vartriangle} ]   \right) } + E_F\Big(\vrh, \phi \mathcal{A}^{\rm div} [ (\vrh  \vuh \psi)^{\vartriangle} ] \Big).
\end{align}
Note that we have used \eqref{con-err-flux-1} for the last equality. 
Subtracting \eqref{con-d-f2} from \eqref{con-d-f1}, we derive
\begin{align}\label{con-d-f3}
&  \intTO{ D_t  (\vrh  \vuh ) \cdot \vvh } - \int_{0}^{T} \intfacesint{ {\bf F}_h^{\alpha}  (\vrh \vuh ,\vuh ) \cdot \jump{\vvh} }\dt \br
&= \intTO{   \vrh \vuh \cdot \Grad \left( \phi \mathcal{A}^{\rm div} [ (\vrh  \vuh \psi)^{\vartriangle} ]   \right) } - \intTO{\vrh\vuh \otimes \vuh  : \Grad \vv }   - K_1 + K_2
\end{align}
with
\begin{align}
K_2 =  E_F\Big(\vrh, \phi \mathcal{A}^{\rm div} [ (\vrh  \vuh \psi)^{\vartriangle} ] \Big) - E_F(\vrh \vuh, \vv). 
\end{align}
Let us reformulate the first two terms on the right hand side of \eqref{con-d-f3}:
\begin{align*}
&\intTO{   \vrh \vuh \cdot \Grad \left( \phi \mathcal{A}^{\rm div} [ (\vrh  \vuh \psi)^{\vartriangle} ]   \right) } 
= \intTO{   \vrh \vuh \cdot \Grad \left( \phi \mathcal{A}^{\rm div} [\vrh  \vuh \psi]   \right) } - K_3 \\
= \ & \intTO{ \vrh \vuh \phi \cdot \Grad \left(  \mathcal{A}^{\rm div} [\vrh  \vuh \psi]   \right)} + \intTO{\vrh \vuh \mathcal{A}^{\rm div} [\vrh  \vuh \psi]  \cdot \Grad \phi  } - K_3
\end{align*}
and 
\begin{align*}
&   \intTO{\vrh\vuh \otimes \vuh  : \Grad (\psi \mathcal{A}^{\nabla}[\phi \vr_h])} \\
=\ &  \intTO{\vrh\vuh \otimes \vuh  \psi  : \Grad ( \mathcal{A}^{\nabla}[\phi \vr_h])} + \intTO{ \Grad \psi \cdot \vrh\vuh \otimes \vuh \cdot  \mathcal{A}^{\nabla}[\phi \vr_h] }
\end{align*}
with
\begin{align*}
K_3 = \TS  \intTO{\vrh \vuh \cdot  \Grad \left( \phi \mathcal{A}^{\rm div} [D_t(\vrh  \vuh \psi)]   \right)}.
\end{align*}
The above estimates finally yield
\begin{align}\label{con-d-f4}
&  \int_{0}^{T}\intO{ D_t  (\vrh  \vuh ) \cdot \vvh }\dt - \int_{0}^{T} \intfacesint{ {\bf F}_h^{\alpha}  (\vrh \vuh ,\vuh ) \cdot \jump{\vvh} }\dt \br
=\ & \intTOB{    \vrh \vuh \phi \cdot \Grad \left(  \mathcal{A}^{\rm div} [\vrh  \vuh \psi]   \right) -  \vrh\vuh \otimes \vuh  \psi  :\Grad ( \mathcal{A}^{\nabla}[\phi \vr_h])} \br
+\ & \intTOB{ \vrh \vuh \mathcal{A}^{\rm div} [\vrh  \vuh \psi]  \cdot \Grad \phi -  \Grad \psi \cdot \vrh\vuh \otimes \vuh  \cdot \mathcal{A}^{\nabla}[\phi \vr_h] } \br
-\ &  \intTO{   (\vrh  \vuh )^{\vartriangle} \cdot  \mathcal{A}^{\nabla}[\phi \vr_h] \, D_t \psi  } +\mathcal{F}_5(\vrh,\vrh \vuh)
\end{align}
with 
\[
\mathcal{F}_5(\vrh,\vrh \vuh) = K_2-K_3= E_F\Big(\vrh, \phi \mathcal{A}^{\rm div} [ (\vrh  \vuh \psi)^{\vartriangle} ] \Big) - E_F(\vrh \vuh, \vv)-K_3.
\]
Now, recalling that 
\begin{align*}
& \norm{\Grad \vv}_{L^2((0,T)\times\Omega; \R^{d\times d})} + \norm{\Grad \left(\phi \mathcal{A}^{\rm div} [ (\vrh  \vuh \psi)^{\vartriangle} ]\right) }_{L^2((0,T)\times\Omega; \R^{d})} \aleq 1, 
\end{align*}
we may control $\mathcal{F}_5(\vrh,\vrh \vuh)$ as follows
\begin{align*}
& \abs{E_F\Big(\vrh, \phi \mathcal{A}^{\rm div} [ (\vrh  \vuh \psi)^{\vartriangle} ] \Big) - E_F(\vrh \vuh, \vv)} \aleq    h^{(1-\alpha)/2}+h^{(1+\alpha)/2},\\
& \abs{K_3} \aleq \TS +  \TS \norm{D_t (\vrh  \vuh)}_{L^{1}([0,T]\times\Omega;\R^d)} \aleq \TS^{1/2} \approx h^{1/2},
\end{align*}
which finishes the proof of Lemma~\ref{lem-f}.
\end{proof}

\begin{Lemma}\label{lem-lim}
%Let the conditions in Lemma~\ref{lem-vis} hold. 
Under the hypotheses of Theorem \ref{thm_main}, with regard to the
the convergences \eqref{weak-con-1}-\eqref{weak-con-4}, \eqref{con-m},  \eqref{con-mmd} and \eqref{con-p},  there holds
\begin{align}\label{lim-flux-1}
& \lim_{h\to 0}  \intTO{  \Curlh \vuh \cdot \left(\Grad \psi \times  \mathcal{A}^{\nabla}[\phi \vrh]\right)  } = \intTO{  \Curl \vu \cdot \left(\Grad \psi \times  \mathcal{A}^{\nabla}[\phi \vr]\right)  }, \\
& \lim_{h\to 0} \intTO{  \Divh \vuh  \Grad \psi \cdot  \mathcal{A}^{\nabla}[\phi \vr_h] }  = \intTO{  \Div \vu  \Grad \psi \cdot  \mathcal{A}^{\nabla}[\phi \vr] }, \\
&\lim_{h\to 0} \intTO{ p_h \Grad \psi \cdot \Big(  \mathcal{A}^{\nabla}[\phi \vr_h]) \Big)  } = \intTO{ p \Grad \psi \cdot \Big(  \mathcal{A}^{\nabla}[\phi \vr]) \Big)  }, \\
& \lim_{h\to 0} \intTO{   (\vrh  \vuh )^{\vartriangle} \cdot  \mathcal{A}^{\nabla}[\phi \vr_h] \, D_t \psi  } =  \intTO{   \vr  \vu \cdot  \mathcal{A}^{\nabla}[\phi \vr] \, \pd_t \psi  },\\
& \lim_{h\to 0}\intTOB{ \vrh \vuh \mathcal{A}^{\rm div} [\vrh  \vuh \psi]  \cdot \Grad \phi  -  \Grad \psi \cdot \vrh\vuh \otimes \vuh \cdot  \mathcal{A}^{\nabla}[\phi \vr_h] } \br
& =  \intTOB{ \vr \vu \mathcal{A}^{\rm div} [\vr  \vu \psi]  \cdot \Grad \phi -  \Grad \psi \cdot \vr\vu \otimes \vu  \cdot \mathcal{A}^{\nabla}[\phi \vr] }, \\  \label{lim-flux-3}
&\lim_{h\to 0} \intTO{\vrh \vc{g} \cdot  \psi \mathcal{A}^{\nabla}[\phi \vr_h]} =  \intTO{\vr \vc{g} \cdot  \psi \mathcal{A}^{\nabla}[\phi \vr]}
\end{align}
and
\begin{align}\label{lim-flux-2}
&\lim_{h\to 0} \left(\intTOB{   \phi \vrh \vuh  \cdot \Grad \left(  \mathcal{A}^{\rm div} [\vrh  \vuh \psi]   \right) -  \psi \vrh\vuh \otimes \vuh   : \Grad ( \mathcal{A}^{\nabla}[\phi \vr_h])}  \right)\br
&= \intTOB{ \phi \vr \vu  \cdot \Grad \left(  \mathcal{A}^{\rm div} [\vr  \vu \psi]   \right) -  \psi \vr\vu \otimes \vu   : \Grad ( \mathcal{A}^{\nabla}[\phi \vr]) }.
\end{align}
\end{Lemma}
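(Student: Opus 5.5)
The plan is to show that every term in Lemma~\ref{lem-lim} is a product of one factor converging \emph{strongly} and one factor converging \emph{weakly}. The only exception is \eqref{lim-flux-2}, which is a genuine commutator and will be handled by Lemma~\ref{lem-divcurl-1}.

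The basic tool is the compactness of $\mathcal{A}^\nabla$ and $\mathcal{A}^{\rm div}$. By \eqref{op-lift-IBP-1} these operators gain one derivative, so they map weakly convergent sequences in $L^p(\Omega)$ into strongly convergent sequences in $L^q(\Omega)$, $q<\infty$, at each fixed time. To get convergence in space--time as well, I will use the time regularity already established.

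\begin{itemize}
\item For the density: \eqref{contr} gives $\tvr_h\to\vr$ in $C_{\rm weak}([0,T];L^q(\Omega))$. Hence $\mathcal{A}^\nabla[\phi\tvr_h]\to\mathcal{A}^\nabla[\phi\vr]$ in $C([0,T];L^q(\Omega))$, since a compact operator turns weakly convergent sequences into strongly convergent ones, uniformly on equicontinuous families.
\item Using \eqref{prox1}, $\mathcal{A}^\nabla[\phi\vrh]-\mathcal{A}^\nabla[\phi\tvr_h]\to0$ in $L^q((0,T)\times\Omega)$. So $\mathcal{A}^\nabla[\phi\vrh]\to\mathcal{A}^\nabla[\phi\vr]$ strongly in every $L^q$ with $q<\infty$.
\item The same argument, with \eqref{contm}, \eqref{prox2} and the weak-$*$ limit \eqref{con-m}, gives $\mathcal{A}^{\rm div}[\vrh\vuh\psi]\to\mathcal{A}^{\rm div}[\vr\vu\psi]$ strongly.
\end{itemize}

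With these strong limits, the individual terms follow.

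\begin{itemize}
\item \emph{Curl and divergence terms.} $\Curlh\vuh$ and $\Divh\vuh$ converge weakly in $L^2$ to $\Curl\vu$ and $\Div\vu$ by \eqref{weak-con-3}. They are paired with the strongly convergent factor $\Grad\psi\times\mathcal{A}^\nabla[\phi\vrh]$ (respectively $\Grad\psi\cdot\mathcal{A}^\nabla[\phi\vrh]$), so the first two limits follow.
\item \emph{Pressure term.} $p_h=\vrh\vth$ converges weakly by \eqref{con-p}, which gives the third limit.
\item \emph{Gravity term \eqref{lim-flux-3}.} This follows in the same way from \eqref{weak-con-1}.
\item \emph{Time-derivative term.} $D_t\psi\to\pd_t\psi$ uniformly because $\psi$ is smooth. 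The shifted quantity $(\vrh\vuh)^{\vartriangle}$ has the same weak limit $\vr\vu$, since time shifts by $\TS$ differ by $\TS D_t$, controlled through \eqref{ap2}.
\item \emph{Remaining lower-order term.} The products $\vrh\vuh$ and $\vrh\vuh\otimes\vuh$ converge weakly by \eqref{con-m} and \eqref{con-mmd}, and they are multiplied by the strongly convergent $\mathcal{A}$-factors.
\end{itemize}

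The main obstacle is \eqref{lim-flux-2}. There $\Grad\mathcal{A}^{\rm div}$ and $\Grad\mathcal{A}^\nabla$ are only bounded and not compact, so weak times weak products must be handled by a commutator argument.

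\begin{itemize}
\item \emph{Rewriting.} Because $\vuh\to\vu$ strongly by \eqref{con-u}, I will use $\psi\vrh\vuh^{(i)}\vuh^{(j)}\partial_j\mathcal{A}^i[\phi\vrh]$ and move the strongly convergent velocity factor out. Up to terms that vanish in the limit (replace $\vuh$ by $\vu$, the error being small in $L^2$ times bounded in $L^2$), the expression becomes
\[
\intTO{ \vu\cdot\Big( \phi\vrh\,\Grad\mathcal{A}^{\rm div}[\psi\vrh\vuh] - \psi\vrh\vuh\cdot\Grad\mathcal{A}^\nabla[\phi\vrh]\Big)}.
\]
\item \emph{Commutator.} This is exactly the second commutator of Lemma~\ref{lem-divcurl-1}, with $B_n=\phi\vrh$ and $w_n=\psi\vrh\vuh$, tested against $\vu$.
\item \emph{Technical point 1: the test field.} $\vu$ is not smooth but lies in $L^2(0,T;W^{1,2})\cap L^\infty$. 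I will first approximate $\vu$ by smooth fields; the error is uniform in $h$ because all the other factors are bounded in $L^2$.
\item \emph{Technical point 2: time dependence.} Lemma~\ref{lem-divcurl-1} is stated at fixed time. I will apply it for a.e.\ $t$ using the equicontinuity in time from \eqref{contr} and \eqref{contm}: at each $t$ the tilded sequences converge weakly and their time shifts are negligible. Then I will integrate in time using dominated convergence, since the integrands are uniformly bounded.
\end{itemize}

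After the limit is taken, the smooth approximation of $\vu$ is reversed, giving the right-hand side of \eqref{lim-flux-2}.
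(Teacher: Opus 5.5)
Your proposal is correct and follows essentially the paper's route. For the first six limits you pair a weakly convergent factor with an $\mathcal{A}$-term that converges strongly, by compactness together with \eqref{contr}, \eqref{contm}, \eqref{prox1} and \eqref{prox2}; for \eqref{lim-flux-2} you apply Lemma~\ref{lem-divcurl-1} at fixed $t$ to the time-interpolated quantities and combine it with the strong convergence \eqref{con-u} of $\vuh$. The differences are only cosmetic: you replace $\vuh$ by $\vu$ before invoking the commutator lemma, whereas the paper first gets weak convergence of $\mathcal{H}_h^L$ and then pairs it with $\vuh$, and you state explicitly the strong $C([0,T];L^q)$ convergence of the $\mathcal{A}$-terms that the paper's $C_{\rm weak}$ statement implicitly relies on.
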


\begin{proof}
{Thanks to the convergence of $\widetilde{\vr}_h$, 
$\widetilde{\vrh\vuh}$ stated in \eqref{contr}, \eqref{contm},  and the
smoothing properties of the operators $\mathcal{A}^{\rm div}$, 
$\mathcal{A}^{\nabla}$, we have} 
\begin{align}
&\mathcal{A}^{\nabla}[\phi \widetilde{\vrh}]  \to  \mathcal{A}^{\nabla}[\phi \vr] \quad   \mbox{ in } \ C_{weak}([0,T]; L^q(\Omega;\R^d)) \ \mbox{for any}\ 1 \leq q  < \infty, \\
&\mathcal{A}^{\nabla}[\phi \widetilde{\vrh \vuh}]  \to  \mathcal{A}^{\nabla}[\phi \vr \vu]  \quad  \mbox{ in } \ C_{weak}([0,T]; L^q(\Omega;\R^{d\times d})) \ \mbox{for any}\ 1 \leq q  < \infty. 
\end{align} 
{This, together with \eqref{prox1}, \eqref{prox2}, 
yields \eqref{lim-flux-1} -- \eqref{lim-flux-3}. } 

It remains to show \eqref{lim-flux-2}.
First, we reformulate the left-hand-side term with
\begin{align*}
&\intTOB{   \phi \vrh \vuh  \cdot \Grad \left(  \mathcal{A}^{\rm div} [\vrh  \vuh \psi]   \right) -  \psi \vrh\vuh \otimes \vuh   : \Grad ( \mathcal{A}^{\nabla}[\phi \vr_h])} 
= \intTO{   \vuh  \cdot \mathcal{H}_h} 
\end{align*}
and
\[
\mathcal{H}_h := \phi \vrh \Grad \left(  \mathcal{A}^{\rm div} [\vrh  \vuh \psi]   \right) -     \Grad ( \mathcal{A}^{\nabla}[\phi \vr_h]) \cdot \psi \vrh\vuh \in L^{\infty}([0,T] \times \Omega; \R^d).
\]
Note that $\mathcal{H}_h$ has exactly the form required in  Lemma \ref{lem-divcurl-1} with $B_n = \phi \vrh$ and $w_n = \psi \vrh \vuh$.
The only problem with applying Lemma \ref{lem-divcurl-1} is due to their low regularity in time. To solve this problem, let us introduce a continuous-in-time version of $\mathcal{H}_h$, denoted by $\mathcal{H}_h^L $:
\begin{align*}
\mathcal{H}_h^L := \phi \widetilde{\vrh} \Grad \left(  \mathcal{A}^{\rm div} [\widetilde{\vrh \vuh} \psi]   \right) -     \Grad ( \mathcal{A}^{\nabla}[\phi \widetilde{\vrh}]) \cdot \psi \widetilde{\vrh\vuh}.
\end{align*}
The error between $\mathcal{H}_h^L $ and $\mathcal{H}_h$ can be controlled as follows
\begin{align*}
&\mathcal{H}_h^L - \mathcal{H}_h \\
&= \phi \widetilde{\vrh} \Grad \left(  \mathcal{A}^{\rm div} [\widetilde{\vrh \vuh} \psi]   \right) - \phi \widetilde{\vrh} \Grad \left(  \mathcal{A}^{\rm div} [\vrh \vuh \psi]   \right) + \phi \widetilde{\vrh} \Grad \left(  \mathcal{A}^{\rm div} [\vrh \vuh \psi]   \right) - \phi \vrh \Grad \left(  \mathcal{A}^{\rm div} [\vrh  \vuh \psi]   \right)   \\
& -  \left(   \Grad ( \mathcal{A}^{\nabla}[\phi \widetilde{\vrh}]) \cdot \psi \widetilde{\vrh\vuh}  - \Grad ( \mathcal{A}^{\nabla}[\phi \vrh]) \cdot \psi \widetilde{\vrh\vuh} + \Grad ( \mathcal{A}^{\nabla}[\phi \vrh]) \cdot \psi \widetilde{\vrh\vuh} -     \Grad ( \mathcal{A}^{\nabla}[\phi \vr_h]) \cdot \psi \vrh\vuh \right),
\end{align*}
which gives 
\begin{align*}
\norm{\mathcal{H}_h^L - \mathcal{H}_h}_{L^2((0,T)\times\Omega; \R^d)} \aleq \TS \norm{D_t (\vrh, \vrh\vuh)}_{L^2((0,T)\times\Omega;\R^{d+1})} \aleq \TS^{1/2}.
\end{align*}
%Consequently, it holds that (up to a subsequence)
%\begin{align*}
%\abs{\mathcal{H}_h^L - \mathcal{H}_h} \to 0 \ \mbox{a.e. in}\  (0,T)\times\Omega.
%\end{align*}
Consequently, we have
\begin{align*}
\lim_{h\to 0}\intTO{\vuh \mathcal{H}_h} = \lim_{h\to 0}\intTO{\vuh \mathcal{H}_h^L}.
\end{align*}
%On the other hand, 
%thanks to convergences result \eqref{con-m}, i.e.\  
%\begin{align*}
%\vrh \vuh  \to \vr \vu  \ \mbox{ weakly in } L^q([0,T]\times \Omega; \R^d) \ \mbox{for any}\ 1 \leq q  < \infty
%\end{align*}
%and 
%\begin{align*}
%\abs{\widetilde{\vrh \vuh} -  \vrh \vuh} \leq \TS D_t (\vrh \vuh), \quad  \norm{D_t (\vrh \vuh)}_{L^2((0,T)\times\Omega)} \aleq \TS^{-1/2},
%\end{align*}
%we also have 
%\begin{align*}
%\widetilde{\vrh \vuh}  \to \vr \vu  \ \mbox{ weakly in } C_{\rm weak, loc}([0,T]; L^q(\Omega; \R^d)) \ \mbox{for any}\ 1 \leq q  < \infty.
%\end{align*}
{With the convergence results \eqref{contr}, \eqref{contm} and \eqref{prox1}, \eqref{prox2} at hand,  
we shall apply Lemma \ref{lem-divcurl-1} with $B_n = \phi \widetilde{\vrh}$ and $w_n = \psi \widetilde{\vrh \vuh}$ 
\emph{pointwise} for any fixed $t \in (0,T)$} to conclude 
\begin{align}
\mathcal{H}_h^L \to  \mathcal{H} \ \mbox{ weakly in } L^{q}((0,T)\times\Omega; \R^d) \ \mbox{for any}\ 1 \leq q  < \infty
\end{align}
with
\[
 \mathcal{H} :=  \phi \vr \Grad \left(  \mathcal{A}^{\rm div} [\vr \vu \psi]   \right) -     \Grad ( \mathcal{A}^{\nabla}[\phi \vr]) \cdot \psi \vr\vu.
\]
Now, applying the strong convergence of $\vuh$, i.e.\ \eqref{con-u}, we obtain
\begin{align}
\lim_{h\to 0}\intTO{\vuh\mathcal{H}_h^L} =  \intTO{\vu\mathcal{H}},
\end{align}
and finish the proof of Lemma~\ref{lem-lim}.
%Further, let us introduce the mollifier
%\begin{align*}
%R^{\delta}(x) = \delta^{-d} R\left( \frac{|x|}{\delta}\right), \quad 
%R(x) = \begin{cases}
%\kappa e^{-1/(1-|x|^2)}, & |x| < 1,\\
%0, & |x|\geq 1,
%\end{cases}
%\end{align*}
%where $\kappa$ is such that $\int_{\R^d} R \dx = 1$ and $\delta > 0$ is small. 
%Let us define the subdomain $\Omega_{\delta} = \{x \in \Omega \,|\, \mbox{dist}(x,\pd \Omega) > \delta \}$.
%Then, we have
%\begin{align*}
%\intTOI{\vuh \mathcal{H}_h^L} = \intTOI{(\vuh - R_{\delta} * \vuh) \mathcal{H}_h^L}  + \intTOI{R_{\delta} * \vuh \, \mathcal{H}_h^L}. 
%\end{align*}
%Since
%\begin{align*}
%\abs{\intTOI{(\vuh - R_{\delta} * \vuh) \mathcal{H}_h^L}} \aleq 
%\end{align*}
\end{proof}

\subsection{Proof of Lions' identity (Lemma~\ref{lem})}
 
{With all preliminary results at hand, we are ready to show Lions’ identity claimed in \ Lemma~\ref{lem}.}
%{\cgrey We start by applying the integration-by-parts formula \eqref{InByPa} on the momentum equation \eqref{VFV_M1} obtaining
%\begin{align}\label{eq-lem}
%&\intTO{ D_t  (\vrh  \vuh ) \cdot \vvh } - \int_{0}^{T} \intfacesint{ {\bf F}_h^{\alpha}  (\vrh \vuh ,\vuh ) \cdot \jump{\vvh} }\dt -\intTO{\vrh \vc{g} \cdot  \vvh}  \br
%& \quad =  -\intTO{ (\bS_h -p_h \bI ) : \Gradh \vvh }.
%\end{align}
%
%Next, using} 
Using the error estimates obtained in Lemmas~\ref{lem-bS} -- \ref{lem-f} we rewrite \eqref{eq-lem} in the form
\begin{align}\label{eq-lem-1}
&  \intTOB{ \psi \phi \vr_h p_h }  - (2\mu+\lambda) \intTOB{ \psi \phi \Divh \vc{u}_h \vr_h } = \sum_{i=1}^7 J_i^h,
\end{align}
with
\begin{align*}
& J_1^h = - \intTOB{ p_h \Grad \psi \cdot \Big(  \mathcal{A}^{\nabla}[\phi \vr_h]) \Big)  }, \\
& J_2^h = \mu \intTOB{  \Curlh \vc{u}_h \cdot \left(\Grad \psi \times  \mathcal{A}^{\nabla}[\phi \vr_h]\right) }  +(2\mu+\lambda) \intTOB{  \Divh \vc{u}_h  \Grad \psi \cdot  \mathcal{A}^{\nabla}[\phi \vr_h] },\\
& J_3^h = \intTOB{    \vrh \vuh \phi \cdot \Grad \left(  \mathcal{A}^{\rm div} [\vrh  \vuh \psi]   \right) -  \vrh\vuh \otimes \vuh  \psi  : \Grad ( \mathcal{A}^{\nabla}[\phi \vr_h])}, \br
&J_4^h = \intTOB{ \vrh \vuh \mathcal{A}^{\rm div} [\vrh  \vuh \psi]  \cdot \Grad \phi -  \Grad \psi \cdot \vrh\vuh \otimes \vuh  \cdot\mathcal{A}^{\nabla}[\phi \vr_h]  }, \br
& J_5^h = -\intTO{   (\vrh  \vuh )^{\vartriangle} \cdot  \mathcal{A}^{\nabla}[\phi \vr_h] \, D_t \psi  }, \\
& J_6^h =   -\intTO{\vrh \vc{g} \cdot  \psi \mathcal{A}^{\nabla}[\phi \vr_h]}, \\
& J_7^h = \mathcal{F}_3(\vr_h) - \mathcal{F}_4(\vrh) + \mathcal{F}_5(\vrh,\vrh \vuh).
\end{align*}

\medskip
\noindent Using Lemma~\ref{lem-lim}, we may pass to the limit in \eqref{eq-lem} obtaining
\begin{align*}
&J_1^h \to J_1 := - \int_0^T \intO{ p \Grad \psi \cdot \Big(  \mathcal{A}^{\nabla}[\phi \vr]) \Big)  }\dt, \\
& J_2^h \to  J_2 = \mu \int_0^{T}\int_{\Omega}  \Curl \vc{u}  \Grad \psi \times  \mathcal{A}^{\nabla}[\phi \vr]  \dxdt  +(2\mu+\lambda) \int_0^{T}\int_{\Omega}  \Div \vc{u}  \Grad \psi \cdot  \mathcal{A}^{\nabla}[\phi \vr]  \dxdt,\\
& J_3^h \to J_3 = \intTOB{ \phi \vr \vu  \cdot \Grad \left(  \mathcal{A}^{\rm div} [\vr  \vu \psi]   \right) -  \psi \vr\vu \otimes \vu   : \Grad ( \mathcal{A}^{\nabla}[\phi \vr]) },  \\
& J_4^h \to J_4 =  \intTOB{ \vr \vu \mathcal{A}^{\rm div} [\vr  \vu \psi]  \cdot \Grad \phi -  \Grad \psi \cdot \vr\vu \otimes \vu  \cdot \mathcal{A}^{\nabla}[\phi \vr]  },\\
& J_5^h \to J_5 =  -\intTO{   \vr  \vu \cdot  \mathcal{A}^{\nabla}[\phi \vr] \, \pd_t \psi  },\\
& J_6^h \to J_6 =  -\intTO{\vr \vc{g} \cdot  \psi \mathcal{A}^{\nabla}[\phi \vr]} ,\\
& J_7^h \to 0.
\end{align*}

Finally, since we already know that the limit $(\vr, \vu, \vt)$ satisfy the equation of continuity 
\eqref{w3} as well as the momentum equation \eqref{w5}, we may repeat the above procedure by considering 
$\psi \mathcal{A}^\nabla [\phi \vr]$. After a straightforward manipulation, for which we refer to \cite[Chapter 6]{Fe:2004}, 
we deduce the desired conclusion
\begin{align}\label{eq-lem-2}
\sum_{i=1}^6 J_i =   \intTO{ \psi \phi \vr p }  - (2\mu+\lambda) \intTO{ \psi \phi \vr \Div \vc{u}}. 
\end{align}
Combining \eqref{eq-lem-1} and \eqref{eq-lem-2} finishes the proof of Lemma \ref{lem}.

\subsection{Proof of Theorem~\ref{THM:density}}\label{subsec:con-d}
To begin, let us recall the renormalized continuity equation in \cite[Lemma 8.3]{FeLMMiSh} or \cite[Lemma A.1]{LSY:2025NM}.

\begin{Lemma}[Renormalized continuity equation {\cite[Lemma 8.3]{FeLMMiSh} or \cite[Lemma A.1]{LSY:2025NM}}]\label{lem_r2}
Let $(\vrh ,\vuh)$ satisfy \eqref{VFV_D}. Then for any $\phi_h \in Q_h$ and any function $B\in C^1(\R)$
we have
\begin{align} 
& \intO{ {\rm D}_t B(\vrh) \phi_h } - \intfacesint{ \Up[B(\vrh), \vuh] \cdot \jump{\phi_h} } + \intO{ \phi_h \; (\vrh B'(\vrh) - B(\vrh) ) \; \Divh \vuh }
\nonumber \\
& = - \frac{1}{\Delta t}\intO{\phi_h \EB{\vrh^\triangleleft |\vrh} } - h^\alpha \intfacesint{ \jump{\vrh} \jump{B'(\vrh) \phi_h}} \br
& -\intfacesint{ |\avs{\vuh } \cdot \vn | \phi_h^{\rm down} \EB{\vrh^{\rm up}|\vrh^{\rm down}} }, \label{renormalized_density}
\end{align}
where $E_f(v_1|v_2) = f(v_1) - f'(v_2)(v_1-v_2) - f(v_2),  f \in C^1(\R)$.
\end{Lemma}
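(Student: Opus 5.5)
The identity is purely algebraic and local in time, so I will prove it at a fixed time level $k$, cell by cell. Write $\vrh=\vrh^k$ and $\vuh=\vuh^k$. The plan is to take the scheme \eqref{VFV_D} with the test function $B'(\vrh)\phi_h$ and reorganise each term so that the renormalized quantities appear, with exact remainders in the form of Bregman-type terms $E_B$.

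\textbf{Time derivative.} On each cell I use the identity
\[
B'(\vrh)(\vrh-\vrh^\triangleleft)=B(\vrh)-B(\vrh^\triangleleft)+E_B(\vrh^\triangleleft|\vrh).
\]
This is just the definition of $E_B$. Dividing by $\TS$ and integrating against $\phi_h$ gives
\[
\intO{D_t\vrh\, B'(\vrh)\phi_h}=\intO{D_t B(\vrh)\phi_h}+\frac1{\TS}\intO{\phi_h E_B(\vrh^\triangleleft|\vrh)}.
\]
This produces the first term on the right-hand side of \eqref{renormalized_density}.

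\textbf{Diffusive part of the flux.} The part $-h^\alpha\jump{\vrh}$ of $\Fup$, tested with $B'(\vrh)\phi_h$, gives exactly
\[
h^\alpha\intfacesint{\jump{\vrh}\jump{B'(\vrh)\phi_h}}
\]
with the matching sign. Nothing needs to be done here.

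\textbf{Upwind part (the main step).} I would split this into face contributions. On an interior face $\sigma$ with $v=\avs{\vuh}\cdot\vn$, I would write the upwind term as
\[
-\Up[\vrh,\vuh]\jump{B'(\vrh)\phi_h}.
\]
I would assume $v\ge0$, so the upwind cell is "in" and the downwind cell is "out"; the case $v<0$ is symmetric. The goal is to compare this with
\[
-\Up[B(\vrh),\vuh]\jump{\phi_h}+(\text{pressure-like term}).
\]
The term $\intO{\phi_h(\vrh B'-B)\Divh\vuh}$ must be distributed to faces. Since $\Divh\vuh|_K=\sum_{\sigma\in\facesK}\avs{\vuh}\cdot\vn/h$, this term equals
\[
\sum_K\sum_{\sigma\in\facesK}|\sigma|\,\phi_K\,\big(\vrh B'(\vrh)-B(\vrh)\big)_K\, v_{\sigma,K}.
\]
Each interior face then carries two contributions, one from each neighbour, with opposite normals. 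Boundary faces vanish because $\avs{\vuh}=0$ there by \eqref{VFV_bound}.

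Collecting all terms on a face with $v\ge0$, the sum should reduce to
\[
|\sigma|\,v\,\phi^{\rm out}\Big[B(\vrh^{\rm in})-B(\vrh^{\rm out})-B'(\vrh^{\rm out})(\vrh^{\rm in}-\vrh^{\rm out})\Big]\ \ge\ 0.
\]
I expect the bracket to equal $E_B(\vrh^{\rm up}|\vrh^{\rm down})$ times $\phi^{\rm down}$. The $\phi^{\rm in}$ contributions should cancel exactly: $v\,\vrh^{\rm in}B'(\vrh^{\rm in})\phi^{\rm in}$ from the flux against $v\,\phi^{\rm in}(\vrh B'-B)^{\rm in}$ from the divergence term, leaving $v\,B(\vrh^{\rm in})\phi^{\rm in}$, which matches the upwind flux of $B$.

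Checking this cancellation with the correct signs and orientation conventions is where I expect the bookkeeping risk to lie. Once it is done, the algebra closes and gives \eqref{renormalized_density}.
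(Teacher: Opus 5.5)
Your proposal is correct and is essentially the standard proof behind this lemma (the paper gives no proof of its own and cites \cite[Lemma 8.3]{FeLMMiSh}). That proof tests \eqref{VFV_D} with $B'(\vrh)\phi_h$, rewrites the time difference via the definition of $E_B$, and then checks the algebra face by face; indeed, for $v=\avs{\vuh}\cdot\vn\geq 0$ the $\phi_h^{\rm out}$ coefficient $-vB(\vrh^{\rm in})-v\big(\vrh B'(\vrh)-B(\vrh)\big)^{\rm out}+vE_B(\vrh^{\rm in}|\vrh^{\rm out})$ collapses to $-v\vrh^{\rm in}B'(\vrh^{\rm out})$, which is exactly the flux's coefficient. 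The only slip is your aside that the face remainder is $\geq 0$: this needs $B$ convex and $\phi_h\geq 0$ (as in the later use with $B(z)=z\log z$, $\phi_h\equiv 1$) and plays no role in the identity itself.
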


Finally, we are ready to show the strong convergence of density mimicking the arguments of the existence theory.
\begin{proof}[Proof of Theorem~\ref{THM:density}]
We start from the discrete renormalized continuity equation, Lemma~\ref{lem_r2}.
Choosing $B(z)= z \log z$ and $\phi_h \equiv 1$ we obtain for a.a.\ $t\in (0,T)$,
\begin{equation}\label{disc-renorm-rlogr}
\intO{ (\vrh \log(\vrh))(t) }  + \int_{0}^{t}\intO{ \vrh \Divh \vuh } \dt \leq \intO{ (\vrh^0 \log(\vrh^0)) }.
\end{equation}
Passing to the limit $h \to 0$ (up to a subsequence) yields
\begin{equation}\label{limit-renorm-rlogr-ineq}
\intO{ \Ov{\vr \log(\vr)}(t) }  + \int_{0}^{t}\intO{ \Ov{\vr \Div \vu} } \dt \leq \intO{ \vr_0 \log(\vr_0) }
\qquad \mbox{for a.a.\ } t\in(0,T),
\end{equation}
where the overline in the above integral terms denotes their weak limits. We have used \eqref{idata} and \eqref{init-pro} %\eqref{init-conv} together with the uniform bounds \eqref{ap} 
to identify $\intO{ \vrh^0\log \vrh^0} \to \intO{ \vr_0 \log \vr_0}$.

Next, seeing that $(\vr,\vu)$ is a renormalized solution of the continuity equation \eqref{w4} we may
choose $b(z)=z\log z$ to obtain 
\begin{equation}\label{limit-renorm-rlogr-eq}
\intO{ (\vr \log\vr)(t) }  + \int_{0}^{t}\intO{ \vr \Div \vu } \dt = \intO{ \vr_0 \log(\vr_0) } \ \mbox{for a.a.}\ t \in (0,T).
\end{equation}
Subtracting \eqref{limit-renorm-rlogr-eq} from \eqref{limit-renorm-rlogr-ineq} we infer
\begin{equation}\label{defect-ineq}
\intO{ (\Ov{\vr \log(\vr)} - \vr \log\vr)(t) }  + \int_{0}^{t}\intO{ (\Ov{\vr \Div \vu} - \vr \Div \vu) } \dt \leq  0
\qquad \mbox{for a.a.\ } t\in(0,T).
\end{equation}
Note that the first term is non--negative as a consequence of convexity of the function $z \mapsto \log(z)$,
\[
0 \leq \Ov{\vr \log(\vr)} - \vr \log\vr .
\]

Next, Lemma~\ref{lem} (Lions' identity) together with the strong convergence of $\vth$ implies
\begin{equation}\label{defect-flux}
(2\mu+\lambda)\left(\Ov{ \vr\Div \vu} - \vr \Div \vu\right)
= \vt\left(\Ov{\vr^2} - \vr^2\right)
\qquad \mbox{in } \ \mathcal{D}'((0,T)\times\Omega),
\end{equation}
hence
\[
\int_{0}^{t}\intO{ (\Ov{\vr \Div \vu} - \vr \Div \vu) } \dt
= \frac{1}{2\mu+\lambda}\int_{0}^{t}\intO{ \vt\left(\Ov{\vr^2} - \vr^2\right)} \dt \ge 0
\]
for a.a.\ $t\in(0,T)$.
Combining with \eqref{defect-ineq} we conclude that both terms vanish, in particular,
\[
\intO{ (\Ov{\vr \log(\vr)} - \vr \log\vr)(t) } =0 \qquad \mbox{for a.a.\ } t\in(0,T).
\]
By strict convexity of $z\mapsto z\log z$ and boundedness of $\vrh$ both from above and from below, we derive 
\[
\vrh \to \vr \ \mbox{in}\ L^2((0,T) \times \Omega.
\]
This immediately yields the conclusion of Theorem \ref{conv-d}.
\end{proof}

We have completed the proof of Theorem \ref{THM:density}, which also finishes the proof of Theorem \ref{thm_main}.

\section*{Acknowledgments}
This work was partially supported by the Mathematical Research Institute Oberwolfach via the Oberwolfach Research Fellows project in 2025. The authors gratefully acknowledge the hospitality of the institute and its stimulating working atmosphere.

%\bibliography{citace}

\begin{thebibliography}{100}


\bibitem{AbFeNo:2021}
A. Abbatiello, E. Feireisl and A. Novotn\'y. 
\newblock Generalized solution to models of compressible viscous fluids. 
\newblock {\em Discrete Contin. Dyn. Syst.}, {\bf 40}(1):1-28, 2021.

%\bibitem{BLMSY}
%D.~Basari\'c, M.~Luk{\'a}{\v c}ov{\'a}-Medvi{\softd}ov{\'a}, H.~Mizerov\'a, B.~She and Y.~Yuan.
%\newblock Error estimates of a finite volume method for the Navier--Stokes--Fourier system.
%\newblock{\em Math. Comp.}, {\bf 92}:2543-2574, 2023.

%\bibitem{BelFeiOsch}
%P. Bella, E. Feireisl and F. Oschmann. 
%\newblock Rigorous derivation of the Oberbeck-Boussinesq
%approximation revealing unexpected term. 
%\newblock {\em Comm. Math. Phys.}, {\bf 403}(3):1245-1273, 2023.

\bibitem{BUCLGS}
T.~Buckmaster, G.~Cao-Labora and J.~G\'omez-Serrano.
\newblock Smooth self-similar imploding profiles to 3{D} compressible {E}uler.
\newblock {\em Quart. Appl. Math.}, {\bf 81}(3):517--532, 2023.

%\bibitem{CaHoHo}
%W.M. Castillo, Wm. H. Hoover and C.G. Hoover. 
%\newblock Coexisting attractors in Rayleigh-B\'enard flow. 
%\newblock {\em Phys. Rev. E}, {\bf 55}:5546-5550, 1997.


%\bibitem{ChauFei}
%N. Chaudhuri and E. Feireisl. 
%\newblock Navier-Stokes-Fourier system with Dirichlet boundary conditions. 
%\newblock {\em Appl. Anal.}, {\bf 101}(12):4076-4094, 2022.

%\bibitem{CheGli}
%G.-Q. Chen and J. Glimm. 
%\newblock Kolmogorov-type theory of compressible turbulence and inviscid limit of the Navier-Stokes equations in $\R^3$.
%\newblock {\em Phys. D}, {\bf 400}:132138, 10, 2019.


%\bibitem{ChenGli12}
%G.-Q. Chen and J. Glimm. 
%\newblock Kolmogorov’s theory of turbulence and inviscid limit of the Navier-Stokes equations in $\R^3$.
%\newblock {\em Comm. Math. Phys.}, {\bf 310}(1):267-283, 2012.


\bibitem{davidson}
P.A.~Davidson.
\emph{Turbulence. 
An introduction for scientists and engineers.}  Oxford University Press, Oxford, 2015.

\bibitem{DiPena-Lions}
R. J. DiPerna and P. L. Lions. 
\newblock Ordinary differential equations, transport theory and Sobolev spaces. 
\newblock{\em Invent. Math.}, {\bf 98}:511-547, 1989.

\bibitem{GL}
R.~Eymard, T.~Gallou\"et, R.~Herbin and J.~Latch\'e. 
\newblock A convergent finite element-finite volume scheme for the
compressible {S}tokes problem. {II}. The isentropic case.
\newblock {\em Math. Comp.}, {\bf 79}: 649--675, 2010.




%\bibitem{FanFeiHof}
%F. Fanelli, E. Feireisl, and M. Hofmanov\'a. 
%\newblock Ergodic theory for energetically open compressible fluid flows. 
%\newblock {\em Phys. D}, {\bf 423}:Paper No. 132914, 25, 2021.


\bibitem{Fe:2004}
E. Feireisl. 
\newblock{\em Dynamics of viscous compressible fluids.} \newblock %Oxford Lecture Series in Mathematics and its Applications, vol. 26. 
Oxford University Press, Oxford, 2004.






\bibitem{FeKaNo}
E.~Feireisl, T.~Karper and A.~Novotn{\'y}.
\newblock A convergent numerical method for the {N}avier--{S}tokes--{F}ourier
system.
\newblock {\em IMA J. Numer. Anal.}, {\bf 36}(4):1477-1535, 2016.

%\bibitem{FeiLuSun}
%E. Feireisl, Y. Lu, and Y. Sun. 
%\newblock Unconditional stability of equilibria in thermally driven compressible fluids. 
%\newblock {\em Arch. Ration. Mech. Anal.}, {\bf 248}(6):Paper No. 98, 38, 2024.


\bibitem{FeLMMiSh}
E. Feireisl, M. Luk{\'a}{\v c}ov{\'a}-Medvi{\softd}ov{\'a}, {H}. Mizerov{\'a} and B. She. 
\newblock {\em Numerical analysis of compressible fluid flows}. 
\newblock Springer-Verlag, Cham, 2021.


\bibitem{FeLMShYu:2024}
E. Feireisl, M. Luk{\'a}{\v{c}}ov{\'a}-Medvi{\softd}ov{\'a}, B. She and Y. Yuan.
\newblock Convergence of numerical methods for the Navier-Stokes-Fourier system driven by uncertain initial/boundary data.
\newblock{\em Found. Comput. Math.}, {\bf 25}:1507-1559, 2025.



\bibitem{FeLMShYu:2025II}
E. Feireisl, M. Luk{\'a}{\v{c}}ov{\'a}-Medvi{\softd}ov{\'a}, B. She and Y. Yuan.
\newblock 
Temperature-driven turbulence in compressible fluid flows.
Preprint 2026.

\bibitem{Feireisl-Novotny:2017}
E.~Feireisl and A.~Novotn\'y.
\newblock {\em Singular limits in thermodynamics of viscous fluids}.
\newblock Advances in Mathematical Fluid Mechanics. Birkh\"auser/Springer, Cham, 2017.
\newblock Second edition.

\bibitem{FeiNovOpen}
E. Feireisl and A. Novotn\'y. 
\newblock {\em Mathematics of open fluid systems}. 
Birkh\"auser–Verlag, Basel, 2022.

%\bibitem{FeiSwGw}
%E. Feireisl and A. Swierczewska-Gwiazda. 
%\newblock The Rayleigh–B\'enard problem for compressible fluid flows. 
%\newblock {\em Arch. Ration. Mech. Anal.}, {\bf 247}(1):Paper No. 9, 2023.

%\bibitem{GlLaCh}
%J. Glimm, D. Lazarev, and G.-Q. Chen. 
%\newblock Maximum entropy production as a necessary admissibility condition for the fluid Navier-Stokes and Euler equations. 
%\newblock {\em SN Applied Science}, {\bf 2}:pp. 2160, 2020.




\bibitem{HOF1}
D.~Hoff.
\newblock Global solutions of the Navier-Stokes equations for multidimensional compressible flow with discontinuous initial data.
\newblock {\em J. Differential Equations}, {\bf 120}:215-254, 1995.

\bibitem{HOF7}
D.~Hoff.
\newblock Dynamics of singularity surfaces for compressible viscous flows in two space dimensions.
\newblock {\em Commun. Pure Appl. Math.}, {\bf 55}:1365-1407, 2002.

\bibitem{HofSan}
D.~Hoff and M.~M. Santos.
\newblock Lagrangean structure and propagation of singularities in multidimensional compressible flow.
\newblock {\em Arch. Ration. Mech. Anal.}, {\bf 188}(3):509-543, 2008.


%\bibitem{JohSch}
%J. P. John and J. Schumacher. 
%\newblock Compressible turbulent convection in highly stratified adiabatic background. 
%\newblock {\em J. Fluid Mech.}, {\bf 972}:Paper No. R4, 12, 2023.


\bibitem{Karper}
T.~Karper.
\newblock A convergent FEM-DG method for the compressible Navier--Stokes equations.
\newblock {\em Numer. Math.}, {\bf 125}(3):441--510, 2013.




\bibitem{Lions}
P. L. Lions.  
\newblock {\em Mathematical topics in fluid mechanics. In: Compressible Models, vol. 2.} 
\newblock  Oxford University Press, New York, 1998.



\bibitem{LSY:2025NM}
M. Luk\'{a}\v{c}ov\'{a}-Medvid'ov\'{a}, B. She and Y. Yuan. 
\newblock Penalty method for the Navier-Stokes-Fourier system with Dirichlet boundary conditions: Convergence and error estimates. 
\newblock {\em Numer. Math.}, {\bf 157}:1079-1132, 2025. 



\bibitem{MeRaRoSz}
F.~Merle, P.~Rapha\"{e}l, I.~Rodnianski and J.~Szeftel.
\newblock On the implosion of a compressible fluid {I}: smooth self-similar inviscid profiles.
\newblock {\em Ann. of Math. (2)}, {\bf 196}(2):567-778, 2022.

\bibitem{MeRaRoSzbis}
F.~Merle, P.~Rapha\"{e}l, I.~Rodnianski and J.~Szeftel.
\newblock On the implosion of a compressible fluid {II}: singularity formation.
\newblock {\em Ann. of Math. (2)}, {\bf 196}(2):779-889, 2022.

%\bibitem{TiShVe}
%H. Tiwari, L. Sharma and M. K. Verma. 
%\newblock Compressible convective turbulence at very high Rayleigh numbers. 
%\newblock {\em Int. J. Heat Mass Transfer}, {\bf 242}:126821, 2025.

%\bibitem{WangX}
%X. Wang. 
%\newblock Numerical algorithms for stationary statistical properties of dissipative dynamical systems. 
%\newblock {\em Discrete Contin. Dyn. Syst.}, {\bf 36}(8):4599–4618, 2016.




\end{thebibliography}

%\bibliographystyle{plain}

\appendix
\section{Useful lemmas}
In this section, we recall the Div-Curl lemma  \cite[Theorem 11.28]{Feireisl-Novotny:2017}.
\begin{Lemma}[{\bf Div-Curl lemma}]\label{div-curl}
Let \( Q \in \R^N \) be an open set.  
Assume 
\[
\vc{f}_n \to \vc{f} \text{ weakly in } L^p(Q;\R^N), \quad
\vc{g}_n \to \vc{g} \text{ weakly in } L^q(Q;\R^N), 
\]
where 
\[
\frac1p + \frac1q = \frac1r < 1.
\]
In addition, let
\begin{align}
& \mbox{\rm DIV } \vc{f}_n \ \mbox{ precompact in }  W^{-1,s}(Q),     \\
& \mbox{\rm CURL } \vc{g}_n \ \mbox{ precompact in }  W^{-1,s}(Q; \R^{N\times N})  
\end{align}
for a certain \( s> 1\).
Then
\[
\vc{f}_n \cdot \vc{g}_n \to \vc{f} \cdot \vc{g} \text{ weakly in } L^r(Q;\R^N). 
\]
\end{Lemma}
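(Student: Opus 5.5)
The statement to prove is the Div--Curl lemma (Lemma~\ref{div-curl}); I would follow the classical Murat--Tartar route, reducing to a compactness argument via Hodge decomposition and localization. Since the claim is local, it suffices to show $\int_Q \vc{f}_n\cdot\vc{g}_n \varphi \to \int_Q \vc{f}\cdot\vc{g}\varphi$ for every $\varphi\in C^\infty_c(Q)$. Boundedness of $\vc{f}_n\cdot\vc{g}_n$ in $L^r$ with $r>1$ then gives weak convergence in $L^r$ by density. So I fix $\varphi$ and a cut-off $\chi\in C^\infty_c(Q)$ with $\chi=1$ on ${\rm supp}\,\varphi$. I replace $\vc{f}_n,\vc{g}_n$ by $\chi\vc{f}_n,\chi\vc{g}_n$, extended by zero to $\R^N$. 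The products $\Grad\chi\cdot\vc{f}_n$ and $\Grad\chi\otimes\vc{g}_n$ are bounded in $L^{p}$ and $L^q$ and compactly supported, so they are compact in $W^{-1,s'}$ for a suitable $s'>1$ by Rellich. Hence DIV and CURL of the localized sequences remain precompact in $W^{-1,\tilde s}(\R^N)$ for some $\tilde s>1$, with uniform compact support.

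Next I Hodge-decompose $\chi\vc{g}_n$. I write $\chi\vc{g}_n=\Grad \Phi_n+\vc{z}_n$, where $\Phi_n=\Delta^{-1}\Div(\chi\vc{g}_n)$ and $\vc{z}_n=-\Delta^{-1}\,{\rm div}\,{\rm CURL}(\chi\vc{g}_n)$, which is a Riesz-transform expression. By Calder\'on--Zygmund theory, $\Grad\Phi_n$ is bounded in $L^q$. The precompactness of CURL in $W^{-1,\tilde s}$ makes $\vc{z}_n$ precompact in $L^{\tilde s}_{\rm loc}$, and boundedness in $L^q$ together with interpolation upgrades this to strong convergence in $L^{q-\varepsilon}_{\rm loc}$. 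I treat $\chi\vc{f}_n$ symmetrically, but only need its divergence.

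The product then splits as $\int \chi\vc{f}_n\cdot\vc{z}_n\varphi + \int\chi\vc{f}_n\cdot\Grad\Phi_n\varphi$. The first term converges because it pairs a weakly convergent sequence with a strongly convergent one; this uses $1/p+1/q<1$, which leaves room to lose $\varepsilon$ in the exponent. For the second term I integrate by parts to get $-\langle \Div(\chi\vc{f}_n),\Phi_n\varphi\rangle - \int \chi\vc{f}_n\cdot\Grad\varphi\,\Phi_n$. Here $\Phi_n$ is bounded in $W^{1,q}_{\rm loc}$, so by Rellich it converges strongly in $L^q_{\rm loc}$ and in fact in $W^{1-\delta,q}_{\rm loc}$. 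The duality term pairs a strongly compact sequence in $W^{-1,\tilde s}$ with a bounded sequence $\Phi_n\varphi$ in $W^{1,q}$.

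\textbf{The main obstacle} is matching the exponents in that duality pairing, since $\tilde s$ and $q$ need not be conjugate. I would first interpolate: $\Div(\chi\vc{f}_n)$ is bounded in $W^{-1,p}$ and precompact in $W^{-1,\tilde s}$, hence precompact in $W^{-1,p-\varepsilon}$. Then $1/(p-\varepsilon)+1/q<1$ for small $\varepsilon$, which gives $\Phi_n\varphi\in W^{1,q}\subset W^{1,(p-\varepsilon)'}$ on bounded support. The pairing therefore converges, strong times weak. Identifying the limits yields $\int\vc{f}\cdot\vc{g}\varphi$, which completes the proof.
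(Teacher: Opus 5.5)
The paper gives no proof of this lemma. It quotes it from \cite[Theorem 11.28]{Feireisl-Novotny:2017} and uses it only through Proposition~\ref{col-div-curl}, so there is no in-paper argument to compare against.

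Your proposal supplies the classical Murat--Tartar proof, and it is correct. Its steps are localization, the Helmholtz decomposition $\chi\vc{g}_n=\Grad\Phi_n+\vc{z}_n$, strong compactness of the curl part, and integration by parts against $\Div(\chi\vc{f}_n)$. You also identify correctly where the strict inequality $1/p+1/q<1$ is needed. It absorbs the $\varepsilon$-loss when you interpolate compactness from the low exponent $\tilde s$ up to $q-\varepsilon$ (for $\vc{z}_n$) and up to $p-\varepsilon$ (for $\Div(\chi\vc{f}_n)$). This is exactly why the weak hypothesis ``some $s>1$'' suffices in the paper's application. There, part of the space-time divergence is only bounded in $L^1$, hence precompact merely in $W^{-1,s}$ with $s<N/(N-1)$.

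A few routine points deserve a line each if you write this out.
\begin{itemize}
\item \emph{Mapping the curl part into $L^{\tilde s}_{\rm loc}$.} To send ${\rm CURL}(\chi\vc{g}_n)$ through $\Delta^{-1}{\rm div}$, use the fixed compact support to write such a distribution as $T_0+\sum_k\partial_k T_k$. Here the $T_i\in L^{\tilde s}$ are compactly supported with norm control. The $\partial_k T_k$ part is handled by Riesz transforms. The $T_0$ part is handled by Young's inequality with the locally integrable kernel $\Grad\Gamma$, where $\Gamma$ is the fundamental solution of $\Delta$. The same Young argument shows that $\Phi_n$ itself is bounded in $L^q_{\rm loc}$.
\item \emph{Interpolating the negative Sobolev norms.} Interpolation between $W^{-1,\tilde s}$ and $W^{-1,p}$ is cleanest after applying $(1-\Delta)^{-1/2}$ on $\R^N$. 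This reduces it to H\"older interpolation between Lebesgue spaces.
\item \emph{Infinite exponents.} If $p$ or $q$ equals $\infty$, replace it after localization by a large finite exponent that still satisfies $1/p+1/q<1$. The Calder\'on--Zygmund bound fails at $\infty$.
\item \emph{Scalar limit.} The product $\vc{f}_n\cdot\vc{g}_n$ is scalar, so the conclusion is weak convergence in $L^r(Q)$, as in your proof. The target space $L^r(Q;\R^N)$ in the statement is a typo.
\end{itemize}
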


\begin{Proposition}\label{col-div-curl}
Let $\{r_n, v_n\}_{n=1}^{\infty}$ satisfy
\begin{align*}
r_n \to r \text{ weakly in } L^2((0,T)\times \Omega), \quad
v_n \to v \text{ weakly in } L^q((0,T)\times \Omega), 
\end{align*}
where $q>2$, 
%\[
%\frac12 + \frac1q = \frac1m < 1, \, q>2
%\]
and 
\begin{align} 
&\partial_t r_n =  h_n^1 + h_n^2,  \quad
 h_n^1\in_b L^1((0,T)\times \Omega), \quad h_n^2 \in L^2(0,T;W^{-1,2}(\Omega)), \\ 
&\Grad v_n = D_n^1 + D_n^2, \quad   
D_n^1 \in_b L^1((0,T)\times \Omega), \quad
{D_n^2 \to 0 \mbox{ in } W^{-1,2}((0,T) \times \Omega; \R^d)).  } 
\end{align}

Then it holds %for a  $m = \frac{2q}{2+q}$ that
\[
r_h v_h \to r v \text{ weakly in } L^{\frac{2q}{2+q}}((0,T)\times \Omega).
\]
\end{Proposition}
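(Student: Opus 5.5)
The plan is to apply the Div-Curl lemma (Lemma~\ref{div-curl}) in the time-space cylinder $Q=(0,T)\times\Omega\subset\R^{N}$, $N=d+1$, with coordinates $(t,x)$. The difficulty is that $h_n^2$ is only bounded in $L^2(0,T;W^{-1,2}(\Omega))$, which is not precompact in any $W^{-1,s}$. So I will not try to prove compactness of $\partial_t r_n$ itself. Instead I will absorb $h_n^2$ into the vector field $\vc{f}_n$, so that only compact parts remain in its space-time divergence. Throughout, the hypothesis on $h_n^2$ is read as uniform boundedness, as in every application in Section~\ref{sec:convergence}.

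\emph{Step 1: representation of $h_n^2$.} Since $L^2(0,T;W^{-1,2}(\Omega))$ is the dual of $L^2(0,T;W^{1,2}_0(\Omega))$, the Riesz/Hahn--Banach representation gives
\[
h_n^2 = g_n^0 + \Div \vc{G}_n, \qquad g_n^0 \in_b L^2((0,T)\times\Omega), \quad \vc{G}_n \in_b L^2((0,T)\times\Omega;\R^d),
\]
with bounds controlled by $\|h_n^2\|_{L^2(W^{-1,2})}$. The derivative $\Div$ is understood in the distributional sense in $Q$.

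\emph{Step 2: choice of the fields.} Set
\[
\vc{f}_n=(r_n,-\vc{G}_n)\in_b L^2(Q;\R^{N}), \qquad \vc{g}_n=(v_n,0,\dots,0)\in_b L^q(Q;\R^N).
\]
After extracting a subsequence, $\vc{G}_n$ converges weakly in $L^2$. Moreover $\frac12+\frac1q=\frac1r<1$ because $q>2$, and $\vc{f}_n\cdot\vc{g}_n=r_nv_n$ exactly. The $\vc{G}_n$ component does not enter the product, since the spatial components of $\vc{g}_n$ vanish.

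\emph{Step 3: compactness of DIV and CURL.} In $\mathcal{D}'(Q)$ we have
\[
\mbox{DIV}\,\vc{f}_n=\partial_t r_n-\Div\vc{G}_n=h_n^1+g_n^0 .
\]
On the CURL side, the only nonzero entries of $\mbox{CURL}\,\vc{g}_n$ are $\pm\partial_{x_i}v_n$, that is, the components of $\Grad v_n=D_n^1+D_n^2$. The compactness then follows from two facts.
\begin{itemize}
\item A set bounded in $L^1(Q)$ (or in measures) is precompact in $W^{-1,s}(Q)$ for $1<s<\frac{N}{N-1}$. This holds because $W^{1,s'}_0(Q)\hookrightarrow C(\Ov Q)$ compactly for $s'>N$, so by Schauder the adjoint embedding is compact.
\item $L^2$-bounded sets are also $L^1$-bounded on the bounded set $Q$.
\end{itemize}
Hence $h_n^1+g_n^0$ and $D_n^1$ are precompact in $W^{-1,s}$ for such $s$. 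The remaining term $D_n^2\to0$ in $W^{-1,2}$, which embeds continuously into $W^{-1,s}$ for $s\le2$, so $D_n^2$ is also precompact there. Fixing one $s\in(1,\frac{N}{N-1})$ satisfies both hypotheses of Lemma~\ref{div-curl}.

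\emph{Step 4: conclusion.} Lemma~\ref{div-curl} gives $r_nv_n\to rv$ weakly in $L^{r}(Q)$ with $r=\frac{2q}{2+q}$. Since the limit is identified, the whole sequence converges and not just a subsequence.

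The main obstacle is the one addressed in Step~1. One must avoid requiring compactness of $h_n^2$ and instead realise it as a spatial divergence built into $\vc{f}_n$. Two technical points need care. First, the $W^{-1}$ spaces in Lemma~\ref{div-curl} and in the hypothesis on $D_n^2$ may be local; this is harmless, because Div-Curl is local and the uniform $L^r$ bound upgrades distributional convergence to weak $L^r$ convergence. Second, the representation of $h_n^2$ must hold with uniform $L^2$ bounds on $g_n^0$ and $\vc{G}_n$.
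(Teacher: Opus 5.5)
Your proposal is correct and follows essentially the same route as the paper: it applies the Div-Curl lemma in $Q=(0,T)\times\Omega$ with $\vc{f}_n=(r_n,\,\cdot\,)$ and $\vc{g}_n=(v_n,0,\dots,0)$, using the compact embedding of $L^1(Q)$ into $W^{-1,s}(Q)$. Your Step~1 makes explicit the spatial flux (the paper's undefined $g_n$) that absorbs $h_n^2$ into $\vc{f}_n$, and reading the hypothesis on $h_n^2$ as a uniform bound is necessary for the statement to hold and is exactly what every application in Section~\ref{sec:convergence} provides.
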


\begin{proof}
The proof is the same as \cite[Lemma 8.1]{AbFeNo:2021}. For completeness, we present the main idea.
  Let $Q = (0,T)\times \Omega$ with $N=d+1$, which implies $\mbox{\rm DIV} = \pd_t + \Div$. 
Applying the Sobolev embedding theorem we get $L^1(Q) \hookrightarrow\hookrightarrow W^{-1,s}(Q)$ {for some $s > 1$}.  
{Thus a direct application of the Div-Curl Lemma \ref{div-curl} with}
\[
\vc{f} = [r_n, g_n], \quad \vc{g} = [v_n,\vc{0}], \quad p = 2, \  q > 2 
\]
yields the desired conclusion
\[
r_h v_h \to r v \text{ weakly in } L^{\frac{2q}{2+q}}(Q).
\]
\end{proof}

\section{Properties of discrete spatial differential operators}\label{sec-diffop}

In this section, we establish the intrinsic properties of discrete spatial differential operators.  
To begin, let us introduce another projection operator $\PiWi: L^1(Q) \to \Whi$ defined by%{\cgrey and $\PiFi$, in addition to $\PiQ$ in \eqref{proj-avg},}
%\begin{subequations}\label{proj}
\begin{align}\label{projW}
%&\PiQ: L^1(Q) \to Q_h, \quad
%\PiQ  \phi (x) = \sumK  \frac{ \mathds{1}_{K}(x)}{|K|} \int_K \phi \dx,\\
%& \PiWi: %Q_h
% L^1(Q) \to \Whi, \quad
\PiWi \phi (x) = \sumSi \frac{\mathds{1}_{D_\sigma}(x)}{|D_\sigma|} \int_{D_\sigma} \phi \dx, \quad 
%\\& \PiFi:  %W^{1,1}
%W^{1,1}(Q) \to \Whi, \quad
%\PiFi \phi(x) = \sumSi \frac{\mathds{1}_{D_\sigma}(x)}{|\sigma|} \int_{\sigma} \phi \, \ds,\\
%&\PiF \vv = \left(\PiF^{(1)}v_1, \cdots, \PiF^{(d)}v_d \right),\quad 
\PiW \vv = \left(\PiW^{(1)}v_1, \cdots, \PiW^{(d)}v_d \right), \quad
\vv = (v_1,\dots, v_d).
\end{align}
%\end{subequations}
%where $\Whi$ represents the space of piecewise constant functions on the $\ith$ dual grid $\meshdi$, and $ \mathds{1}_{K}(x)$ means the characteristic function on $K$. 
Further, we define two differential operators
\begin{align}
%& \PiWi: %Q_h
% L^1(Q) \to \Whi, \quad
%\PiWi \phi (x) = \sumSi \frac{\mathds{1}_{D_\sigma}(x)}{|D_\sigma|} \int_{D_\sigma} \phi \dx,\\
&\pdmeshi: \Whi \to Q_h, \ \pdmeshi r_h(x) = \sumK \left(\pdmeshi r_h \right)_K \mathds{1}_K{(x)} , \
(\pdmeshi r_h)_K := \frac{r_h|_{\sigma_{K,i+}} - r_h|_{\sigma_{K,i-}}}{h}, \\
& \Curlh:  Q_h^d \to Q_h^d, \  \Curlh \vvh(x) = \sumK \left( \Curlh \vvh\right)_K \mathds{1}_K{(x)}, \ (\Curlh \vvh)_K =
\sum_{\sigma\in \facesK}   \vc{n} \times \frac{\avs{\vvh}}{h}. 
\end{align}

We are now ready to establish the relationships among differential operators: $\Gradh, \Divh, \Curlh$ and $\Gradd, \Laph$.

\begin{Lemma}[{\bf Identity}]\label{lem-op-comp}
For any $\vwh \in Q_h^d$ with any extension, there hold
\begin{align}\label{op-compat-h-1}
& \Gradh \Divh \vwh=  \Curlh \Curlh \vwh + \Divh \Gradh \vwh, \  
\Divh \Gradh^T \vwh = \Gradh \Divh \vwh,
\ K \in \mesh , \ K\cap \facesext = \emptyset. 
\end{align}
Additionally,  let us consider two different extensions 
\begin{itemize}
\item[i)] $\vwh$ is arbitrarily extended by two ghost cells outward from the boundary;

\item[ii)] $\vwh, \Gradh \vwh$ are extended with $\jump{\vwh}|_{\facesext} = 0, \jump{\Gradh \vwh}\cdot \vc{n}|_{\facesext} = 0$, respectively. 
\end{itemize} 
Then there hold
\begin{align}\label{op-compat-h-2}
& \Gradh \Divh \vwh=  \Curlh \Curlh \vwh + \Divh \Gradh \vwh, \
\Divh \Gradh^T \vwh = \Gradh \Divh \vwh,
\ K \in \mesh , \ K\cap \facesext \neq \emptyset. 
\end{align}
\end{Lemma}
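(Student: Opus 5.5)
The plan is to verify both identities by direct computation cell by cell, exploiting the tensor-product structure of the uniform Cartesian mesh. On a uniform grid every operator in question is built from shift operators. Write $\tau_j^{\pm}$ for the translation by $\pm h \ve_j$ acting on $Q_h$. Then, for a cell $K$ away from the boundary, the definitions give
\[
(\Gradh w)_{ij} = \delta_j w_i, \qquad \delta_j := \frac{\tau_j^{+} - \tau_j^{-}}{2h},
\]
up to the index convention of $\vn \otimes \avs{\cdot}$. The same calculation gives $\Divh \vwh = \sum_j \delta_j w_j$ and $(\Curlh \vwh)_k = \sum_{j,l} \epsilon_{kjl}\,\delta_j w_l$. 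These follow because $\sum_{\sigma\in\facesK} \vn \,\avs{w}/h$ along direction $j$ equals $\bigl(\tfrac{w^+ + w}{2} - \tfrac{w + w^-}{2}\bigr)/h = (w^+ - w^-)/(2h)$, the cell value cancelling. Hence all discrete operators are expressed through the single family of commuting central-difference operators $\delta_1,\dots,\delta_d$.

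With this representation, the first identity in \eqref{op-compat-h-1} reduces to the algebraic vector identity $\nabla(\nabla\cdot w) = \nabla\times\nabla\times w + \Delta w$, with $\Delta := \sum_j \delta_j^2$. I would verify it by expanding $\epsilon_{kjl}\epsilon_{lmn} = \delta_{km}\delta_{jn} - \delta_{kn}\delta_{jm}$. This step uses only that the $\delta_j$ commute, which holds since translations commute on the uniform grid. Two points need care. First, $\Divh \Gradh$ means $\sum_j \delta_j \delta_j$ applied componentwise, the \emph{wide} Laplacian, not $\Laph$. Second, the identity must be applied on cells whose stencil, of radius $2h$ for the composed operators, stays inside $\Omega$; that is exactly the restriction $K\cap\facesext=\emptyset$, interpreted with the composed stencil. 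The second identity, $\Divh \Gradh^T \vwh = \Gradh \Divh \vwh$, is likewise just $\sum_j \delta_j \delta_i w_j = \delta_i \sum_j \delta_j w_j$, again by commutativity.

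For the boundary cells \eqref{op-compat-h-2}, the argument is to show that each extension makes the shift representation remain valid on cells touching $\facesext$. Under extension i), two layers of ghost cells supply values for every $\tau_j^{\pm}$ and every composition of two shifts appearing in the stencil. The composed operators are therefore again polynomials in commuting $\delta_j$ acting on an extended grid function, and the same algebra applies. Under extension ii), I would check that prescribing $\jump{\vwh}=0$ and $\jump{\Gradh \vwh}\cdot\vn=0$ on exterior faces amounts to defining the ghost values $\vwh^{\rm out}=\vwh^{\rm in}$ and the outer values of $\Gradh\vwh$ consistently. The goal is to show that this is equivalent to a particular ghost-cell extension of $\vwh$, so that case ii) reduces to case i).

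I expect this last reduction to be the main obstacle. One must confirm that the prescribed outer value of $\Gradh\vwh$ (only its normal component is fixed) coincides with $\Gradh$ of some extended grid function, so that commutativity of shifts is not broken at the boundary. If it is not, I would instead compute the boundary-cell contributions in both sides explicitly, direction by direction. Only the normal direction $d$ is affected, thanks to horizontal periodicity. I would then check that the extra terms cancel, using the two jump conditions.
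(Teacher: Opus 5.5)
Your route is the paper's. There, $\Gradh$, $\Divh$, $\Curlh$ are written through $\pdmeshi\PiWi$, which on $Q_h$ is exactly your central difference $\delta_i$. The interior identities and case i) follow from the commutation \eqref{op-commut}; your Levi-Civita expansion spells out the paper's ``direct calculations''. Case ii) is treated by the explicit normal-direction computation that you keep as a fallback.

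One small correction on the interior case. For cells one layer inside the boundary layer, the $\delta_d^2$ part of the composed stencil does reach the first ghost row, so the stencil does not stay in $\Omega$. This is harmless: shifts commute for any grid function on $\Omega$ plus one ghost row, which is why the lemma says ``with any extension'' there.

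What your proposal leaves open is case ii), and that is the only non-routine point. It closes as follows. Let $K$ be a top boundary cell with ghost neighbour $N$ and inner neighbour $S$. Write $q^{\mathrm{gh}}_N$ for a difference evaluated at $N$ from the ghost row of $\vwh$, and $q^{\mathrm{ext}}_N$ for the corresponding entry of the prescribed outer $\Gradh\vwh$. In your fallback computation, every entry of the outer gradient representing a normal difference $\delta_d w_h^j$ drops out of the difference. What remains is
\[
\big(\Gradh\Divh\vwh-\Curlh\Curlh\vwh-\Divh\Gradh\vwh\big)^j\big|_K=\frac{1}{2h}\Big((\delta_j w_h^d)^{\mathrm{gh}}_N-(\delta_j w_h^d)^{\mathrm{ext}}_N\Big),\quad j<d,
\]
and $-\frac{1}{2h}\sum_{i<d}\big((\delta_i w_h^i)^{\mathrm{gh}}_N-(\delta_i w_h^i)^{\mathrm{ext}}_N\big)$ in the normal component. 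This is exactly the residual displayed in the paper.

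Since $\jump{\vwh}=0$ holds along the whole ghost row, $(\delta_i\vwh)^{\mathrm{gh}}_N=\delta_i\vwh|_K$ for $i<d$. So the residual vanishes once the tangential entries of the outer $\Gradh\vwh$ are the ghost-induced ones, i.e.\ continuous across the face. Under that reading your reduction to case i) is literal. The ghost layers $\vwh|_K$ and $2\vwh|_K-\vwh|_S$ produce an outer gradient equal to $\Gradh\vwh|_K$, and this gradient satisfies $\jump{\Gradh\vwh}\cdot\vn=0$.

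Your suspicion was justified, though. The condition $\jump{\Gradh\vwh}\cdot\vn=0$ alone does not fix the tangential entries, and if they are chosen freely the normal-component residual is in general nonzero. The paper's closing sentence tacitly uses the ghost-induced interpretation.
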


\begin{proof}
%{\cgrey 
%Let us recall the definition of $\Whi$, the space of piecewise constants on the $\ith$ dual grid $\meshdi$. Then we shall introduce the following projection operator $\PiWi$ and differential operator $\pdmeshi$
%\begin{align*}
%& \PiWi: %Q_h
% L^1(Q) \to \Whi, \quad
%\PiWi \phi (x) = \sumSi \frac{\mathds{1}_{D_\sigma}(x)}{|D_\sigma|} \int_{D_\sigma} \phi \dx,\\
%&\pdmeshi: \Whi \to Q_h, \quad \pdmeshi r_h(x) = \sumK \left(\pdmeshi r_h \right)_K \mathds{1}_K{(x)} , \quad
%(\pdmeshi r_h)_K := \frac{r_h|_{\sigma_{K,i+}} - r_h|_{\sigma_{K,i-}}}{h}. 
%\end{align*}
%%with $\bWh =\Wh^{(1)} \times \cdots \times \Wh^{(d)}$,  $\PiF \vv = \left(\PiF^{(1)}v_1, \cdots, \PiF^{(d)}v_d \right),$ and $\PiW \vv = \left(\PiW^{(1)}v_1, \cdots, \PiW^{(d)}v_d \right)$, where $\vv = (v_1,\dots, v_d)$.
%Then we can rewrite $\Gradh, \Divh, \Curlh$ by means of $\PiWi, \pdmeshi$ as follows
%}

For the sake of simplicity, we prove \eqref{op-compat-h-1} and \eqref{op-compat-h-2} for the case $d=3$, and the case $d=2$ can be handled in the same manner. 

We start by expressing $\Gradh, \Divh, \Curlh$ in terms of $\PiWi$ and $\pdmeshi$:
 \begin{align} 
& \Gradh  = \left( \pdmesh^{(1)}\PiW^{(1)}, \dots,  \pdmesh^{(3)}\PiW^{(3)}\right),\quad
\Divh \vwh = \sum_{i=1}^3 \pdmeshi\PiWi  w^i_{h}, %\quad
%\Laph r_h = \Divmesh \Gradd r_h,
\\ 
& \Curlh \vwh =\Big(
\pdmesh^{(2)}\PiW^{(2)} w^3_{h} - \pdmesh^{(3)}\PiW^{(3)} w^3_{h},\
\pdmesh^{(3)}\PiW^{(3)} w^1_{h} - \pdmesh^{(1)}\PiW^{(1)} w^3_{h}, \
\pdmesh^{(1)}\PiW^{(1)} w^2_{h} - \pdmesh^{(2)}\PiW^{(2)} w^1_{h}
\Big),
%\Curlh \vwh =
%\begin{cases}
%\pdmesh^{(1)}\PiW^{(1)} w^2_{h} - \pdmesh^{(2)}\PiW^{(2)} w^1_{h}, & d=2, \\
%\Big(
%\pdmesh^{(2)}\PiW^{(2)} w^3_{h} - \pdmesh^{(3)}\PiW^{(3)} w^3_{h},\
%\pdmesh^{(3)}\PiW^{(3)} w^1_{h} - \pdmesh^{(1)}\PiW^{(1)} w^3_{h}, & \\
%\hspace{4.5cm }\pdmesh^{(1)}\PiW^{(1)} w^2_{h} - \pdmesh^{(2)}\PiW^{(2)} w^1_{h}
%\Big), & d = 3,
%\end{cases} 
\end{align}
where $\vwh = (w^1_{h}, w^2_{h},w^3_{h})$. 
Further,  it holds
\begin{align}\label{op-commut}
\pdmeshi\PiWi \pdmesh^{(j)}\PiW^{(j)} =  \pdmesh^{(j)}\PiW^{(j)} \pdmeshi\PiWi,
\end{align}
for any inner element $K\in \mesh, K\cap \facesext = \emptyset$ with any $i,j$; or for any boundary element $K\in \mesh, K\cap \facesext \neq\emptyset$ together with $i \neq j$ or $i = j =1,2$.

\medskip

Therefore, direct calculations yield \eqref{op-compat-h-1}.
It remains to prove \eqref{op-compat-h-2}  for the second extension, whereas the first extension can be handled in the same way as \eqref{op-compat-h-1}. 
Without loss of  generality, let us consider the upper boundary $x_3 = 1$ and denote the boundary cell by $K, K\cap \facesext \neq\emptyset$. Its external (resp.\ internal) neighbour in $x_3$-direction is denoted by $N$ (resp.\ $S$), 
see Figure \ref{figpib-1}.
\begin{figure}[hbt]
\centering
\begin{subfigure}{0.45\textwidth}
%\raggedright
\begin{tikzpicture}[scale=1.]
\draw[-,very thick](0,0)--(6,0)--(6,6)--(0,6)--(0,0);
\draw[-,very thick](0,2)--(6,2);
\draw[-,very thick,blue = 90!](-1,4)--(7,4);
\path node at (7,4.3) {$\cblue x_3 = 1$};
\draw[-,very thick](2,0)--(2,6);
\draw[-,very thick](4,0)--(4,6);

%\draw[-,fill=blue!20,very thick, red=90!, pattern=north east lines, pattern color=red!30](1,1)--(1,5)--(5,5)--(5,1)--(1,1);
%\fill (1,3) circle (2pt) node[below left]{$W$};
%\fill (5,3) circle (2pt) node[below right]{$E$};
\fill (3,1) circle (2pt) node[below right]{$S$};
\fill (3,5) circle (2pt) node[above right]{$N$};
\fill (3,3) circle (2pt) node[below right]{$K$};

%\fill (1,1) circle (2pt) node[below left]{$SW$};
%\fill (5,1) circle (2pt) node[below right]{$SE$};
%\fill (1,5) circle (2pt) node[above right]{$NW$};
%\fill (5,5) circle (2pt) node[below right]{$NE$};

\draw[->,very thick, red = 90!] (3,3)--(4.5,3);
\path node at (4.7,3.2) {$\cred \ve_i, i = 1,2$};
\draw[->,very thick, red = 90!] (3,3)--(3,4.5);
\path node at (3.2,4.7) {$\cred \ve_3$};
\end{tikzpicture}
\end{subfigure}
\caption{Boundary cell $K$.}\label{figpib-1}
\end{figure}
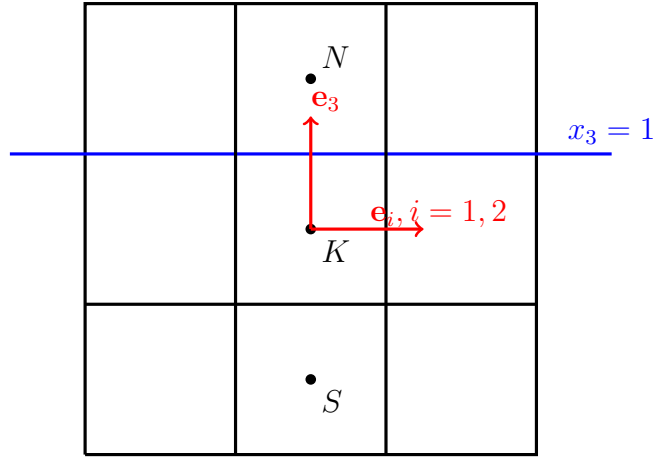

With the extended $\Gradh \vwh$, we have 
\begin{align*}
\left(\Gradh\Divh \vwh \right)^j &= \begin{cases}
\pdmesh^{(j)}\PiW^{(j)} \Divh \vwh & \mbox{ if } j = 1,2, \\
\frac{1}{2h}\left( \left( \Divh \vwh \right)_N - \left( \Divh \vwh \right)_S \right) & \mbox{ if } j = 3, 
\end{cases}\\
\left(\Divh \Gradh \vwh\right)^j &= \sum_{i=1}^2 \pdmesh^{(i)}\PiW^{(i)} \left( \Gradh \vwh \right)^{j,i}  +  \pdmesh^{(3)}\PiW^{(3)} \left( \Gradh \vwh \right)^{j,3} \\
&= \sum_{i=1}^2 \pdmesh^{(i)}\PiW^{(i)} \left( \Gradh \vwh \right)^{j,i}  +  \frac{1}{2h}\left( \left( \Gradh \vwh \right)_N^{j,3} - \left( \Gradh \vwh \right)_S^{j,3} \right), \\
\left(\Divh \Gradh^T \vwh\right)^j &%= \sum_{i=1}^2 \pdmesh^{(i)}\PiW^{(i)} \left( \Gradh \vwh \right)^{i,j}  +  \pdmesh^{(3)}\PiW^{(3)} \left( \Gradh \vwh \right)^{3,j} \\
= \sum_{i=1}^2 \pdmesh^{(i)}\PiW^{(i)} \left( \Gradh \vwh \right)^{i,j}  +  \frac{1}{2h}\left( \left( \Gradh \vwh \right)_N^{3,j} - \left( \Gradh \vwh \right)_S^{3,j} \right),
\end{align*}
and
\begin{align*}
\Curlh\Curlh\vwh = \begin{pmatrix}
\pdmesh^{(2)}\PiW^{(2)} \left( \Curlh\vwh \right)^3 - \frac{1}{2h}\left( \left( \Curlh \vwh \right)_N^{2} - \left( \Curlh \vwh \right)_S^{2} \right)\\
\frac{1}{2h}\left( \left( \Curlh \vwh \right)_N^{1} - \left( \Curlh \vwh \right)_S^{1} \right) - \pdmesh^{(1)}\PiW^{(1)} \left( \Curlh\vwh \right)^3  \\
\pdmesh^{(1)}\PiW^{(1)} \left( \Curlh\vwh \right)^2 - \pdmesh^{(2)}\PiW^{(2)} \left( \Curlh\vwh \right)^1
\end{pmatrix}.
\end{align*}
Hence,
\begin{align*}
&\Gradh \Divh \vwh -  \Curlh \Curlh \vwh - \Divh \Gradh \vwh  \\
=   &\begin{pmatrix}
\pdmesh^{(3)}\PiW^{(3)} \left( \pdmesh^{(1)}\PiW^{(1)} w_h^3 \right) - \frac1{2h}\left( \left( \pdmesh^{(1)}\PiW^{(1)} w_h^3 \right)_N -  \left( \pdmesh^{(1)}\PiW^{(1)} w_h^3 \right)_S \right) \\
\pdmesh^{(3)}\PiW^{(3)} \left( \pdmesh^{(2)}\PiW^{(2)} w_h^3 \right) - \frac1{2h}\left( \left( \pdmesh^{(2)}\PiW^{(2)} w_h^3 \right)_N -  \left( \pdmesh^{(2)}\PiW^{(2)} w_h^3 \right)_S \right) \\
- \sum_{i=1}^2 \left[ \pdmesh^{(3)}\PiW^{(3)} \left( \pdmesh^{(i)}\PiW^{(i)} w_h^i \right) - \frac1{2h}\left( \left( \pdmesh^{(i)}\PiW^{(i)} w_h^i \right)_N -  \left( \pdmesh^{(i)}\PiW^{(i)} w_h^i \right)_S \right) \right]
\end{pmatrix}, \\
&\Divh \Gradh^T \vwh - \Gradh \Divh \vwh  \\
=   &\begin{pmatrix}
\pdmesh^{(3)}\PiW^{(3)} \left( \pdmesh^{(1)}\PiW^{(1)} w_h^3 \right) - \frac1{2h}\left( \left( \pdmesh^{(1)}\PiW^{(1)} w_h^3 \right)_N -  \left( \pdmesh^{(1)}\PiW^{(1)} w_h^3 \right)_S \right) \\
\pdmesh^{(3)}\PiW^{(3)} \left( \pdmesh^{(2)}\PiW^{(2)} w_h^3 \right) - \frac1{2h}\left( \left( \pdmesh^{(2)}\PiW^{(2)} w_h^3 \right)_N -  \left( \pdmesh^{(2)}\PiW^{(2)} w_h^3 \right)_S \right) \\
 \sum_{i=1}^2 \left[ \pdmesh^{(3)}\PiW^{(3)} \left( \pdmesh^{(i)}\PiW^{(i)} w_h^i \right) - \frac1{2h}\left( \left( \pdmesh^{(i)}\PiW^{(i)} w_h^i \right)_N -  \left( \pdmesh^{(i)}\PiW^{(i)} w_h^i \right)_S \right) \right]
\end{pmatrix}.
\end{align*}
Note that the right-hand-side is the residual caused by the extension.
Together with the extension $\jump{\vwh}|_{\facesext} = 0, \jump{\Gradh \vwh}\cdot \vc{n}|_{\facesext} = 0$, we finish the proof.
\end{proof}

\begin{Lemma}[{\bf Integration-by-parts}]\label{lem-IBP}
For any $f_h, v_h \in Q_h, \ \vc{f}_h, \vwh \in Q_h^d$ with any extensions, the following integration-by-parts formulae hold
\begin{itemize}
\item
\begin{subequations}\label{InByPa}
\begin{align}\label{InByPa-1}
 \intO{ \left( \Gradh f_h \cdot \vwh +  f_h \cdot \Divh\vwh\right)}  & =  \intfacesext{\left(\avs{f_h}  \vwh^{in}  + \frac12 f_h^{in}  \jump{\vwh}   \right) \cdot \vn} \br
 & = \intfacesext{\left(  f_h^{in} \avs{\vwh}  + \frac12 \jump{f_h} \vwh^{in}   \right) \cdot \vn}.
 \end{align}
%Further, if $\jump{f_h}|_{\facesext} = \avs{\vwh}\cdot \vc{n}|_{\facesext} = 0$  or $\avs{f_h}|_{\facesext} = \jump{\vwh}\cdot \vc{n}|_{\facesext} = 0$, it holds $ \intO{ \Gradh f_h \cdot \vwh} + \intO{  f_h \cdot \Divh\vwh} = 0$.

\item
\begin{align}\label{InByPa-2}
&\intO{ \Laph f_h \cdot v_h} = \intfacesext{\frac{\jump{f_h}}{h} v_h^{\rm in} }-h  \intfacesint{\Gradd f_h \cdot \Gradd v_h}.  %\quad f_h \in Q_h, \ g \in Q_h,
\end{align}
%if $\avs{v_h}|_{\facesext} = 0$  or $\jump{f_h}|_{\facesext} = 0$.

\item
\begin{align}\label{InByPa-4}
\intO{ \left( \Curlh \vc{f}_h \cdot \vwh - \vc{f}_h \cdot \Curlh  \vwh \right)} 
&= \intfacesext{  \left( \avs{\vc{f}_h} \times \vwh^{in} + \frac12 \vc{f}_h^{in} \times \jump{\vwh} \right) \cdot \vn}   \br
&= \intfacesext{ \left( \vc{f}_h^{in} \times \avs{\vwh} + \frac12 \jump{\vc{f}_h} \times \vwh^{in} \right) \cdot \vn }  . 
\end{align}
\end{subequations}
\end{itemize}
\end{Lemma}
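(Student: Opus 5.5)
The plan is to prove both formulae by discrete summation by parts: write each cell integral as a sum over cell faces, regroup all contributions face by face, and observe that interior faces cancel identically, so only exterior faces survive.

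For \eqref{InByPa-1}, recall that on each cell $K$
\[
\Gradh f_h|_K = \frac1h\sum_{\sigma\in\facesK} \avs{f_h}\vn , \qquad
\Divh \vwh|_K = \frac1h\sum_{\sigma\in\facesK} \avs{\vwh}\cdot\vn ,
\]
so that, using $|K| = h^d$ and $|\sigma| = h^{d-1}$,
\[
\intO{\Gradh f_h\cdot\vwh + f_h \Divh\vwh}
= \sum_{K}\sum_{\sigma\in\facesK}|\sigma|\Big(\avs{f_h}\,\vwh^{\rm in} + f_h^{\rm in}\avs{\vwh}\Big)\cdot\vn .
\]
Fix an interior face $\sigma$ shared by $K$ and $L$. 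It appears twice in the sum, with the roles of in/out exchanged and $\vn$ replaced by $-\vn$. Both copies carry the same averages $\avs{\cdot}$, so together they give
\[
|\sigma|\Big(\avs{f_h}\big(\vwh^{\rm in}-\vwh^{\rm out}\big) + \big(f_h^{\rm in}-f_h^{\rm out}\big)\avs{\vwh}\Big)\cdot\vn
= -|\sigma|\Big(\avs{f_h}\jump{\vwh} + \jump{f_h}\avs{\vwh}\Big)\cdot\vn .
\]
This vanishes by the product rule $\jump{ab} = \avs{a}\jump{b} + \jump{a}\avs{b}$ together with $\jump{f_h \vwh} = 0$, because the scalar product is symmetric: each copy of the face contributes $\tfrac12(f^{\rm in}w^{\rm out} - f^{\rm out}w^{\rm in})$, and these cancel.

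On an exterior face only the single term $\avs{f_h}\vwh^{\rm in} + f_h^{\rm in}\avs{\vwh}$ remains. Using
\[
\avs{\vwh} = \vwh^{\rm in} + \tfrac12\jump{\vwh}, \qquad \avs{f_h} = f_h^{\rm in} + \tfrac12\jump{f_h},
\]
the sum $f_h^{\rm in}\vwh^{\rm in}+\avs{f_h}\vwh^{\rm in}$ equals $2\avs{f_h}\vwh^{\rm in}-\tfrac12\jump{f_h}\vwh^{\rm in}$, which can be rearranged into either of the two stated forms. I must check this bookkeeping carefully, including the factor-of-two conventions for $\avs{\cdot}$ on exterior faces, which depend on the chosen extension.

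Formula \eqref{InByPa-4} is proved the same way. Write
\[
\Curlh\vc{f}_h|_K = \frac1h\sum_{\sigma\in\facesK}\vn\times\avs{\vc{f}_h},
\]
and use the identity $(\vn\times\vc a)\cdot\vc b = -(\vn\times\vc b)\cdot\vc a$ (equivalently $\vn\cdot(\vc a\times\vc b)$). Here the antisymmetry of the cross product replaces the symmetry of the scalar product: the interior contributions $\avs{\vc f}\times\jump{\vw} + \jump{\vc f}\times\avs{\vw}$ again cancel, face by face, after the $K/L$ pairing. The boundary terms are then rearranged exactly as in the first formula.

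For \eqref{InByPa-2}, use
\[
\Laph f_h|_K = \frac1{h^2}\sum_{\sigma\in\facesK}\jump{f_h}.
\]
Pairing an interior face between $K$ and $L$, where the jump changes sign, gives the contribution $-\frac{|\sigma|}{h^2}\jump{f_h}\jump{v_h}$. Summed over interior faces, and since $\Gradd$ is piecewise constant on dual cells of volume $h^d$, this equals $-h\intfacesint{\Gradd f_h\cdot\Gradd v_h}$ up to how the face measure is normalised. Exterior faces produce $\frac{1}{h}\intfacesext{\jump{f_h} v_h^{\rm in}}$.

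I expect no conceptual obstacle in any of the three formulae. The only delicate point is consistent bookkeeping of the orientations $\vn$ and of the exterior extension values, particularly the signs in the curl formula and the normalisation constants in \eqref{InByPa-2}.
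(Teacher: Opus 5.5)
Your strategy of writing the cell integrals as face sums and regrouping face by face is the paper's strategy. However, the step it rests on, that interior faces cancel identically in \eqref{InByPa-1} and \eqref{InByPa-4}, is false.

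You correctly find that the two copies of an interior face $\sigma=K|L$ combine to $-|\sigma|\big(\avs{f_h}\jump{\vwh}+\jump{f_h}\avs{\vwh}\big)\cdot\vn=-|\sigma|\jump{f_h\vwh}\cdot\vn$. But $\jump{f_h\vwh}=f_L\vc{w}_L-f_K\vc{w}_K$ is not zero in general. Symmetry of the scalar product removes only the mixed products $f_K\vc{w}_L$ and $f_L\vc{w}_K$, not the diagonal ones. The curl case is the same: the pair gives $-\jump{\vc{f}_h\times\vwh}\cdot\vn$, and antisymmetry again kills only the mixed terms.

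This is not a bookkeeping or extension issue. With the interior faces discarded, your boundary integrand $\avs{f_h}\vwh^{\rm in}+f_h^{\rm in}\avs{\vwh}$ exceeds the stated $\avs{f_h}\vwh^{\rm in}+\tfrac12 f_h^{\rm in}\jump{\vwh}$ by exactly $f_h^{\rm in}\vwh^{\rm in}$. So the claimed rearrangement into the two stated forms cannot be carried out, and the expression you arrive at matches neither of them.

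The missing ingredient is the discrete divergence theorem for constants: $\sum_{\sigma\in\facesK}|\sigma|\vn_{\sigma,K}=0$ on every cell. It gives $\intfacesint{\jump{\vc{G}_h}\cdot\vn}=\intfacesext{\vc{G}_h^{\rm in}\cdot\vn}$ for every piecewise constant $\vc{G}_h$. Applied with $\vc{G}_h=f_h\vwh$ (resp.\ $\vc{f}_h\times\vwh$), it turns the interior sum into $-\intfacesext{f_h^{\rm in}\vwh^{\rm in}\cdot\vn}$ (resp.\ $-\intfacesext{(\vc{f}_h^{\rm in}\times\vwh^{\rm in})\cdot\vn}$). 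This is exactly the correction you need, since $\avs{f_h}\vwh^{\rm in}+f_h^{\rm in}\big(\avs{\vwh}-\vwh^{\rm in}\big)=\avs{f_h}\vwh^{\rm in}+\tfrac12 f_h^{\rm in}\jump{\vwh}$, and analogously with cross products.

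This is the paper's argument for \eqref{InByPa-4}: apply the product rule for the jump on interior faces, then telescope $\jump{\vwh\times\vc{f}_h\cdot\vn}$ onto $\facesext$. Equivalently, you can subtract the zero quantity $f_K\vc{w}_K\cdot\sum_{\sigma\in\facesK}|\sigma|\vn$ (resp.\ its cross-product analogue) in each cell before pairing. Each copy of an interior face then contributes $\tfrac12(f_K\vc{w}_L+f_L\vc{w}_K)\cdot\vn$, which is symmetric in $K,L$ and genuinely cancels.

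Your treatment of \eqref{InByPa-2} is fine, because there you keep the interior pairs. One slip: a pair contributes $-\frac{|\sigma|}{h}\jump{f_h}\jump{v_h}$, not $-\frac{|\sigma|}{h^2}\jump{f_h}\jump{v_h}$; the former is what equals $-h|\sigma|\,\Gradd f_h\cdot\Gradd v_h$.
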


\begin{proof}
Here we only provide the proof of \eqref{InByPa-4}, as equations \eqref{InByPa-1} and \eqref{InByPa-2} can be analyzed similarly. 
With the definition of $\Curlh$, we can reformulate the left-hand-side of \eqref{InByPa-4} as 
\begin{align*}
& \intO{\left( \Curlh \vc{f}_h \cdot \vwh - \vc{f}_h \cdot \Curlh  \vwh \right)}\\
&= \sumK \frac{|K|}{h}  \sumSK \left( \vn \times \avs{\vc{f}_h} \cdot \vw_K -  \vn \times \avs{\vw_h} \cdot \vc{f}_K \right) \\
&= \intfacesext{  \left(\avs{\vc{f}_h} \times \vw_h^{in} -  \avs{\vw_h} \times \vc{f}_h^{in} \right) \cdot \vn} +  \intfacesint{  \left(\avs{\vc{f}_h} \times \jump{\vw_h} -\avs{\vc{f}_h} \times \jump{\vw_h} \right) \cdot \vn} \\
&= \intfacesext{  \left(\avs{\vc{f}_h} \times \vw_h^{in} -  \avs{\vw_h} \times \vc{f}_h^{in} \right) \cdot \vn}  +  \intfacesext{  \vw_h^{in} \times \vc{f}_h^{in} \cdot \vn} \\
&= \intfacesext{  \left( \avs{\vc{f}_h} \times \vwh^{in} + \frac12 \vc{f}_h^{in} \times \jump{\vwh} \right) \cdot \vn}   = \intfacesext{ \left( \vc{f}_h^{in} \times \avs{\vwh} + \frac12 \jump{\vc{f}_h} \times \vwh^{in} \right) \cdot \vn }  . 
\end{align*}
Note that we have used in the third equality that
\begin{align*}
& \left(\avs{\vc{f}_h} \times \jump{\vw_h} -\avs{\vc{f}_h} \times \jump{\vw_h} \right) \cdot \vn \\
=& -\avs{w_h^2} \jump{f_h^1} + \avs{w_h^1} \jump{f_h^2} -  \big( -\avs{f_h^2} \jump{w_h^1} + \avs{f_h^1} \jump{w_h^2}  \big) \\
=& \jump{w_h^1 f_h^2} - \jump{f_h^1 w_h^2} = \jump{w_h^1 f_h^2 - f_h^1 w_h^2} = \jump{\vw_h \times \vc{f}_h \cdot \vn}
\end{align*}
with $\vn = (0,0,1), \vc{f}_h = (f^1_h,f^2_h,f^3_h), \vw_h = (w^1_h,w^2_h,w^3_h)$. In all, we complete the proof.
\end{proof}

\begin{Lemma}\label{lem_norm}
Let $\vuh\in Q_h^3$ satisfy the boundary condition $\avs{\vuh}|_{\facesext} = 0$. 
Then there holds 
\begin{align}
\intO{ \mathbb{S}_h : \nabla_h \vuh } = \mu  \norm{\Gradh \vuh}_{L^2(\Omega)}^2 + \left( \eta + \frac{1}{3} \mu \right) \norm{\Divh \vuh}_{L^2(\Omega)}^2.
\label{BB10}
\end{align}

\end{Lemma}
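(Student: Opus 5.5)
The plan is to expand $\mathbb{S}_h:\nabla_h\vuh$ algebraically and reduce \eqref{BB10} to one discrete identity for the transposed-gradient term. With $\mathbb{S}_h = 2\mu\mathbb{D}_h\vuh + \lambda\,{\rm tr}(\mathbb{D}_h\vuh)\mathbb{I}$, ${\rm tr}(\mathbb{D}_h\vuh)=\Divh\vuh$ and $\lambda = \eta - \frac{2}{3}\mu$, we have pointwise
\[
\mathbb{S}_h:\nabla_h\vuh = \mu\,|\nabla_h\vuh|^2 + \mu\,\nabla_h^T\vuh:\nabla_h\vuh + \lambda\,|\Divh\vuh|^2 .
\]
Suppose we can show
\begin{equation*}
\intO{\nabla_h\vuh:\nabla_h^T\vuh} = \intO{|\Divh\vuh|^2}.
\end{equation*}
Then the coefficient of $\|\Divh\vuh\|^2_{L^2}$ is $\mu+\lambda = \eta+\frac13\mu$, which is exactly \eqref{BB10}. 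This is the same identity used (in bilinear form) in the proof of Lemma \ref{lem-bS}, so the whole task is to justify it up to the boundary.

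I would prove the identity in two integrations by parts, as in the proof of Lemma \ref{lem-bS}, following the chain
\[
\intO{\nabla_h\vuh:\nabla_h^T\vuh} = -\intO{\vuh\cdot\Divh\nabla_h^T\vuh} = -\intO{\vuh\cdot\nabla_h\Divh\vuh} = \intO{|\Divh\vuh|^2}.
\]
\begin{itemize}
\item For the first and third equalities, use the integration-by-parts formula \eqref{InByPa-1}, applied componentwise. For the first, take $f_h = u_h^j$ and $\vwh$ the $j$-th row of $\nabla_h^T\vuh$; for the third, take $f_h=\Divh\vuh$ and $\vwh=\vuh$.
\item For the middle equality, use the commutation identity $\Divh\nabla_h^T\vwh = \nabla_h\Divh\vwh$ from Lemma \ref{lem-op-comp}.
\end{itemize}
In the interior cells $K\cap\facesext=\emptyset$ everything is exact by \eqref{op-commut}. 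In the horizontal (periodic) directions there are no boundary faces at all. So the only issue is the two walls $x_d=\pm H$.

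At the walls I would proceed as follows.
\begin{itemize}
\item In \eqref{InByPa-1}, choose the first form of the boundary term, $\avs{f_h}\vwh^{\rm in}+\frac12 f_h^{\rm in}\jump{\vwh}$. When $f_h=u_h^j$, the part $\avs{f_h}$ vanishes thanks to $\avs{\vuh}|_{\facesext}=0$.
\item The remaining part $\frac12 f_h^{\rm in}\jump{\nabla_h^T\vuh}\cdot\vn$ is killed by extending $\nabla_h\vuh$ across $\facesext$ with zero normal jump. This is extension ii) of Lemma \ref{lem-op-comp}; it is legitimate because the exterior values of $\nabla_h\vuh$ enter nowhere except these boundary terms.
\item For the last step, the boundary term involves $\avs{\vuh}\cdot\vn$ again, in the second form of \eqref{InByPa-1}, together with $\jump{\Divh\vuh}$, which is again controlled by the chosen extension.
\end{itemize}

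The main obstacle is the middle equality in the boundary cells. Extension ii) in Lemma \ref{lem-op-comp} also asks for $\jump{\vwh}|_{\facesext}=0$, whereas the scheme imposes $\avs{\vuh}=0$, i.e. odd-reflected ghost values. A direct check with central differences and odd reflection suggests a leftover boundary term of the form $2\,(\partial_{h,j}u_h^j)^{\rm in}(u_h^d)^{\rm in}$ for $j\ne d$. My first attempt would be to write out, in a single boundary cell, the residual displayed in the proof of Lemma \ref{lem-op-comp}. I would then show that the boundary contributions produced by the two integrations by parts cancel this residual exactly, using the freedom to define $(\nabla_h\vuh)^{\rm out}$ and the tangential periodicity: summing the tangential difference quotients $\eth^{(j)}_{\mathcal T}$, $j<d$, over a horizontal layer gives zero. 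If an exact cancellation fails, the fallback is to keep the boundary layer term explicitly. This term is supported on one layer of cells, so its size would have to be estimated, but exact equality in \eqref{BB10} would then not follow.
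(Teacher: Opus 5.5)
The step you single out as the main obstacle is a genuine gap, and it cannot be closed, because the residual you found does not vanish. Both sides of the reduced identity $\intO{\Gradh\vuh:\Gradh^T\vuh}=\norm{\Divh\vuh}^2_{L^2(\Omega)}$ depend only on the cell values of $\vuh$ and on the ghost convention $\vuh^{\rm out}=-\vuh^{\rm in}$ on $\facesext$. Choosing $(\Gradh\vuh)^{\rm out}$ in intermediate integrations by parts therefore only moves the defect around; it cannot remove it.

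Write $\partial_{h,j}$ for the central difference quotient in $x_j$. The tangential quotients $\partial_{h,1},\partial_{h,2}$ are skew-adjoint in $L^2(\Omega)$ and commute with $\partial_{h,3}$. But $\partial_{h,3}$ with odd ghosts is not skew: $\partial_{h,3}+\partial_{h,3}^{*}$ is multiplication by $h^{-1}$ on the cell layer at $x_3=-H$ and by $-h^{-1}$ on the layer at $x_3=H$. This gives exactly
\[
\intO{\Gradh\vuh:\Gradh^T\vuh}-\norm{\Divh\vuh}^2_{L^2(\Omega)}=2\int_{\facesext} n_3\,(u_h^3)^{\rm in}\sum_{j=1}^{2}(\partial_{h,j}u_h^j)^{\rm in}\ds ,
\]
where $n_3=\pm1$ is the vertical component of the outer normal. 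This is your guessed term. Tangential periodicity does not remove it, since the integrand is a product with $(u_h^3)^{\rm in}$, not a bare difference quotient.

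Here is a concrete counterexample, on a mesh with at least two cell layers in $x_3$:
\begin{enumerate}
\item Take $u_h^2\equiv0$.
\item Let $u_h^1$ be supported in the layer adjacent to $x_3=-H$, with $\partial_{h,1}u_h^1\not\equiv0$ there.
\item Set $u_h^3=h\,\partial_{h,1}u_h^1$ on that layer and $u_h^3=0$ elsewhere.
\end{enumerate}
The right-hand side above then equals $-2h\int_{\{x_3=-H\}}|(\partial_{h,1}u_h^1)^{\rm in}|^2\ds<0$. Hence $\intO{\mathbb{S}_h:\Gradh\vuh}$ falls short of the right-hand side of \eqref{BB10} by $2\mu h\int_{\{x_3=-H\}}|(\partial_{h,1}u_h^1)^{\rm in}|^2\ds>0$. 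For instance, $u_h^1=(0,1,0,-1)$ repeated along $x_1$ gives $\intO{\Gradh\vuh:\Gradh^T\vuh}=\frac15\norm{\Divh\vuh}^2_{L^2(\Omega)}$. So \eqref{BB10} is false under the sole hypothesis $\avs{\vuh}|_{\facesext}=0$, and your fallback (keep the boundary-layer term) is the correct statement.

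The paper's argument does not get around this either. It reflects all components oddly across $x_3=\pm H$ to the torus $\widehat\Omega$, applies \eqref{coerc} there (legitimately, since there are no boundary terms), and then halves. Halving is correct for $\norm{\Gradh\vuh}^2_{L^2(\Omega)}$, but not for $\norm{\Divh\vuh}^2_{L^2(\Omega)}$ or $\intO{\mathbb{S}_h:\Gradh\vuh}$. Let $R$ be the reflection in the wall, so that $\widehat{\vu}_h=-\vuh\circ R$ on the mirrored half. There $\partial_{h,j}\widehat{\vu}_h=-(\partial_{h,j}\vuh)\circ R$ for $j\le2$, but $\partial_{h,3}\widehat{\vu}_h=(\partial_{h,3}\vuh)\circ R$. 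Consequently $\Divh\widehat{\vu}_h=(\partial_{h,3}u_h^3-\partial_{h,1}u_h^1-\partial_{h,2}u_h^2)\circ R$ on that half, and the mixed products $\partial_{h,3}u_h^j\,\partial_{h,j}u_h^3$, $j\le2$, flip sign. In the example above, the two halves simply exchange the values of $\int\Gradh\widehat{\vu}_h:\Gradh^T\widehat{\vu}_h$ and $\int|\Divh\widehat{\vu}_h|^2$ (ratio $1:5$ on $\Omega$, $5:1$ on the mirror), which is consistent with \eqref{coerc} holding on the torus.

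An exact identity needs an extra hypothesis, such as $(u_h^3)^{\rm in}=0$ on $\facesext$. Otherwise the boundary term must be kept and estimated wherever the lemma is used, e.g.\ in deriving \eqref{ap1}.
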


\begin{proof}
Let us consider an extended function $\widehat{\vu}_h$, defined with
\begin{equation}\label{extend-uh}
\widehat{\vu}_h(t, x_h,x_3) = \begin{cases}
- \vuh(t, x_h, -2H - x_3) \ & \mbox{for} \ x_3 < -H,\\ 
\vuh(t, x_h,x_3) \ & \mbox{for}\ x_3 \in [-H,H], \\ 
- \vuh(t, x_h, 2H - x_3) \ & \mbox{for} \ x_3 > H.
\end{cases}
\end{equation}
The function $\widehat{\vu}_h$ is then defined on a (periodic) torus 
$\widehat{\Omega} = \mathbb{T}^{2} \times  [-2H, 2H] \Big|_{-2H, 2H}$, where the desired relation \eqref{BB10} follows easily from the integration-by-parts formula \eqref{coerc}. Seeing that  
\begin{align}
\intO{ \mathbb{S}_h : \nabla_h \vuh } &= \frac{1}{2} 
\int_{\widehat{\Omega}} \mathbb{S}\left(\nabla_h \widehat{\vu}_h\right) : \nabla_h \widehat{\vu}_h \ \dx, \br \norm{\Gradh \vuh}_{L^2(\Omega)}^2  &= 
\frac{1}{2} \norm{\Gradh \widehat{\vu}_h}_{L^2(\widehat{\Omega})}^2,\ 
\norm{\Divh \vuh}_{L^2(\Omega)}^2 = 
\frac{1}{2} \norm{\Divh \widehat{\vu}_h}_{L^2(\widehat{\Omega})}^2, 
\nonumber
\end{align}
we conclude the proof.
%Hence, we have
%\begin{align*}
%& 2 \norm{\Dhuh}_{L^2(\Omega)}^2  = 2 %\norm{\mathbb{D}(\widehat{\vuh})}_{L^2(\Omega)}^2 = 2 %\intO{\mathbb{D}(\widehat{\vuh}) : \mathbb{D}(\widehat{\vuh})} \\
%& =  \intO{\Gradh \widehat{\vuh} : \Gradh \widehat{\vuh}} +   %\intO{\Gradh \widehat{\vuh} : \Gradh^T \widehat{\vuh}}  \\
%\overset{\eqref{InByPa-1}}{=} \norm{\Gradh %\widehat{\vuh}}_{L^2(\Omega)}^2 -  \intO{ \widehat{\vuh} \cdot  \Divh %\Gradh^T \widehat{\vuh}} + \frac12 \intfacesext{  \widehat{\vuh}^{in} %\cdot  \jump{\Gradh^T \widehat{\vuh}}  \cdot \vn} \\
%&\overset{\eqref{op-compat-h-2}}{=} \norm{\Gradh %\widehat{\vuh}}_{L^2(\Omega)}^2 -  \intO{ \widehat{\vuh} \cdot  \Gradh %\Divh  \widehat{\vuh}} + \frac12 \intfacesext{  \widehat{\vuh}^{in} %\cdot  \jump{\Gradh^T \widehat{\vuh}}  \cdot \vn} \\
%&\overset{\eqref{InByPa-1}}{=} \norm{\Gradh \widehat{\vuh}}_{L^2(\Omega)}^2 +  \intO{ \Divh\widehat{\vuh} \cdot   \Divh  \widehat{\vuh}} + \frac12 \intfacesext{  \widehat{\vuh}^{in} \cdot  \jump{\Gradh^T \widehat{\vuh} - \Divh \widehat{\vuh} \bI}  \cdot \vn} \\
%&\overset{\eqref{extend-uh}}{=} \norm{\Gradh \widehat{\vuh}}_{L^2(\Omega)}^2 +  \norm{\Divh \widehat{\vuh}}_{L^2(\Omega)}^2 =  \norm{\Gradh \vuh}_{L^2(\Omega)}^2 + \norm{\Divh \vuh}_{L^2(\Omega)}^2,
%\end{align*}
%and complete the proof.
\end{proof}

\begin{Lemma}\label{lem-lap}
Let $\vuh\in Q_h^d$ and  $ \vvh\in Q_h^d$ satisfy the boundary conditions
\begin{align*}
\avs{\vuh}|_{\facesext} = 0, \; \jump{\vvh}|_{\facesext} = 0\; \jump{\Gradh \vvh}\cdot \vc{n}|_{\facesext}=0. 
\end{align*}
Denote
\begin{align}
E(\vuh, \vvh) = \intO{ \Gradh \vuh : \Gradh \vvh} - \intO{\Curlh \vuh \cdot \Curl \vv_L} - \intO{\Divh \vuh \cdot \Div \vv_L}
\end{align}
for any $\vv_L \in W^{1,2}(\Omega)$.
Then it holds 
\begin{align}
\abs{E(\vuh,\vvh)} \leq \norm{\Gradh \vuh}_{L^2(\Omega)} \norm{\Gradh \vvh - \Grad \vv_L}_{L^2(\Omega)}.
\end{align}
\end{Lemma}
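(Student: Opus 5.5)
The plan is to write $E(\vuh,\vvh)$ as a single inner product $\intO{\Gradh \vuh : (\Gradh \vvh - \Grad \vv_L)}$ and then apply the Cauchy--Schwarz inequality. This amounts to the identity
\[
\intO{\Gradh \vuh : \Grad \vv_L} = \intO{\Curlh \vuh \cdot \Curl \vv_L} + \intO{\Divh \vuh \, \Div \vv_L},
\]
that is, a discrete-continuous analogue of the classical relation $\|\Grad \vw\|^2 = \|\Curl \vw\|^2 + \|\Div \vw\|^2$ for fields with vanishing boundary values. Once this holds, the estimate follows at once, because
\[
E(\vuh,\vvh) = \intO{\Gradh\vuh : (\Gradh\vvh - \Grad \vv_L)}.
\]

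To prove the identity, I would use the pointwise algebraic relation
\[
\mathbb{A} : \mathbb{B} = {\rm curl}(\mathbb{A}) \cdot {\rm curl}(\mathbb{B}) + {\rm tr}\,\mathbb{A}\,{\rm tr}\,\mathbb{B} + \left(\mathbb{A} : \mathbb{B}^T - {\rm tr}\,\mathbb{A}\,{\rm tr}\,\mathbb{B}\right),
\]
where ${\rm curl}(\mathbb{A})$ denotes the antisymmetric part written as a vector. Here $\mathbb{A} = \Gradh \vuh$, whose discrete curl and trace are precisely $\Curlh \vuh$ and $\Divh\vuh$ by definition, and $\mathbb{B} = \Grad\vv_L$. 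It then suffices to show that the null-Lagrangian remainder
\[
\intO{\left(\Gradh \vuh : \Grad^T \vv_L - \Divh \vuh\, \Div \vv_L\right)}
\]
vanishes.

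For the continuous factor I would integrate by parts cellwise, or equivalently argue by density. The discrete operator $\Gradh$ acting on $Q_h$ data is an average of face jumps, so the cleaner route is the discrete one. I would first note that the remainder is linear and continuous in $\vv_L \in W^{1,2}$, so it suffices to establish it after replacing $\Grad \vv_L$ by $\Gradh$ of a discrete field. Then I would invoke the discrete identity $\Divh \Gradh^T = \Gradh \Divh$ from Lemma \ref{lem-op-comp} together with the integration-by-parts formula \eqref{InByPa-1}, which gives
\[
\intO{\Gradh\vuh:\Gradh^T\vvh} = \intO{\Divh\vuh\,\Divh\vvh},
\]
as was already used in the proof of Lemma \ref{lem-bS}. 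The boundary contributions in \eqref{InByPa-1} and \eqref{InByPa-4} cancel thanks to $\avs{\vuh}|_{\facesext}=0$ for $\vuh$ and the extension $\jump{\vvh}=0$, $\jump{\Gradh\vvh}\cdot\vn=0$ for $\vvh$. For the same reason
\[
\intO{\Gradh\vuh:\Gradh\vvh} = \intO{\Curlh\vuh\cdot\Curlh\vvh} + \intO{\Divh\vuh\,\Divh\vvh},
\]
which follows from $\Gradh\Divh = \Curlh\Curlh + \Divh\Gradh$ in \eqref{op-compat-h-2}.

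The main obstacle is the transfer from discrete to continuous: $\Curl\vv_L$ and $\Div\vv_L$ are not literally $\Curlh$ and $\Divh$ of $\vvh$. I would handle this by noting that, by the discrete identity just derived, $E$ equals
\[
\intO{\Curlh\vuh\cdot(\Curlh\vvh - \Curl\vv_L)} + \intO{\Divh\vuh\,(\Divh\vvh - \Div\vv_L)}.
\]
Writing both as the components of $\mathbb{A}=\Gradh\vuh$ paired with the antisymmetric part and the trace of $\mathbb{D}=\Gradh\vvh-\Grad\vv_L$, this is $\intO{\mathbb{A}:\mathcal{P}\mathbb{D}}$. Here $\mathcal{P}$ is the orthogonal projection onto antisymmetric plus trace parts, suitably weighted. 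I would need to verify that $|\mathcal{P}\mathbb{D}|\le|\mathbb{D}|$ with the curl normalization matching; the constant $1$ in the statement suggests this is the intended check, and if the weighting fails I would accept a harmless constant $C$. Cauchy--Schwarz then yields the claim.
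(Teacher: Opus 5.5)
Your last paragraph is the paper's argument, and it is correct. You derive the discrete identity $\intO{\Gradh\vuh:\Gradh\vvh}=\intO{\Curlh\vuh\cdot\Curlh\vvh}+\intO{\Divh\vuh\,\Divh\vvh}$ from Lemmas~\ref{lem-op-comp} and \ref{lem-IBP}. You then rewrite $E$ as $\intO{\Curlh\vuh\cdot(\Curlh\vvh-\Curl\vv_L)}+\intO{\Divh\vuh\,(\Divh\vvh-\Div\vv_L)}$, which is legitimate because $\Curlh\vvh$ and $\Divh\vvh$ are exactly the antisymmetric part and the trace of the matrix $\Gradh\vvh$. Cauchy--Schwarz then finishes. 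What precedes that paragraph should be deleted, and the constant needs correcting.

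\textbf{The opening plan is false, not just unproven.} The mixed identity $\intO{\Gradh\vuh:\Grad\vv_L}=\intO{\Curlh\vuh\cdot\Curl\vv_L}+\intO{\Divh\vuh\,\Div\vv_L}$ does not hold for general $\vv_L\in W^{1,2}$.
\begin{itemize}
\item Write $D^c_i$ for the centred cell difference that $\Gradh$ produces, and $u_j$, $v_i$ for components of $\vuh$, $\vv_L$. The off-diagonal part of your ``null-Lagrangian remainder'' compares $\intO{D^c_i u_j\,\partial_{x_j} v_i}$ with $\intO{D^c_j u_j\,\partial_{x_i} v_i}$.
\item For Fourier modes these carry, up to a common factor, $k_j\sin(k_i h)$ and $k_i\sin(k_j h)$. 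These differ, e.g.\ for $k_ih=\pi/2$ and $k_jh\ll 1$.
\item Your reduction (``the remainder is linear and continuous in $\vv_L$, so it suffices to replace $\Grad\vv_L$ by $\Gradh$ of a discrete field'') does not follow. $\Gradh\vvh$ is not the gradient of any $W^{1,2}$ field, so vanishing on that class says nothing about $\Grad\vv_L$.
\end{itemize}
Your final paragraph never uses this identity, so dropping it costs nothing.

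\textbf{The constant is not $1$.} Your check $|\mathcal{P}\mathbb{D}|\le|\mathbb{D}|$ fails. In $d=3$ one has $|{\rm curl}\,\mathbb{D}|^2+({\rm tr}\,\mathbb{D})^2\le 3|\mathbb{D}|^2$, with equality for $\mathbb{D}=\mathbb{I}$. The argument therefore gives the estimate with constant $3$, or $\sqrt3$ if you also use $\|\Curlh\vuh\|^2+\|\Divh\vuh\|^2=\|\Gradh\vuh\|^2$ (a reformulation of Lemma~\ref{lem_norm}). This is all the paper proves too: its proof ends with $\aleq$, and that is all Lemma~\ref{lem-vis} needs. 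So accepting a constant was the right call.
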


\begin{proof}
Applying Lemmas \ref{lem-op-comp} and \ref{lem-IBP} we have
\begin{align*}
&\intO{ \Gradh \vuh : \Gradh \vvh} = - \intO{  \vuh \cdot \Divh \Gradh \vvh} 
= - \intO{ \vuh \cdot \left(  -\Curlh \Curlh \vvh +  \Gradh \Divh \vvh \right) }\\
&=  \intO{  \Curlh \vuh \cdot   \Curlh \vvh } + \intO{ \Divh \vuh \,   \Divh \vvh  }.
\end{align*}
%Note that we have used Lemma \ref{lem-op-comp} in the second equality.
Consequently, we can reformulate $E(\vuh,\vvh)$ as 
\begin{align*}
E(\vuh,\vvh) = \intO{\Curlh \vuh \cdot \left(\Curlh \vvh - \Curl \vv_L\right)} - \intO{\Divh \vuh  \, \left( \Divh \vvh - \Div \vv_L \right)}
\end{align*}
which leads to
\[
\abs{E(\vuh,\vvh)} \aleq \norm{\Gradh \vuh}_{L^2(\Omega)} \norm{\Gradh \vvh - \Grad \vv_L}_{L^2(\Omega)},
\]
and finishes the proof.
\end{proof}

\section{Consistency and compatibility}

\subsection{Proof of the consistency estimates stated in Lemmas \ref{Lcons1}--\ref{Lconsbe}}\label{ap_css}
\begin{proof}
%{\cgrey The consistency of the continuity equation, the momentum equation as well as the entropy inequality have been proven in \cite[Lemma A.7]{FeLMShYu:2024}. }
 The consistency results of the continuity equation, the momentum equation as well as the entropy inequality are similar as in \cite[Lemma A.7]{FeLMShYu:2024}. The sole difference lies in the temporal regularity of test functions. 
To see this, we revisit the consistency error of the time difference operator defined by
\begin{align*} 
E_t(r_h,\phi)= \intO{ r_h^0 \phi(0,\cdot) }
+ \intTO{ \big( r_h \pd_t \phi + D_t r_h \phi \big)}, \ \mbox{ with } \ r_h = \vrh, \vrh \vuh, \vrh s_h. 
\end{align*}
Thanks to the piecewise linear interpolation, we rewrite $E_t(r_h,\phi)$ as 
\begin{align*}
E_t(r_h,\phi)= \intO{ \widetilde{r_h}^0 \phi(0,\cdot) }
+ \intTO{ \big( r_h \pd_t \phi + \pd_t \widetilde{r_h} \phi \big)} = \intTO{ \big( r_h - \widetilde{r_h} \big) \pd_t \phi },
\end{align*}
%{\cred don't we want to control $E_t(\widetilde{r_h},\phi)$? please check the above two lines}
which can be controlled with $\abs{E_t(r_h,\phi)} \aleq \TS \norm{\pd_t \phi}_{L^2((0,T)\times \Omega)} \norm{\pd_t \widetilde{r_h}}_{L^2((0,T)\times \Omega)} \aleq \TS^{1/2}$ and then yields \eqref{con-e1}, \eqref{cone2} and \eqref{cons5}.

\medskip

%{\cgrey The proof of consistency of the ballistic energy inequality differs slightly due to the introduction of the test function the time dependent test function  $\psi$. }
As for consistency of the ballistic energy inequality, the difference comes from the introduction of 
the time dependent test function  $\psi$. Here we only outline the key steps. 

We first recall the ballistic energy balance in \cite[Lemma A.2]{FeLMShYu:2024}
\begin{align}\label{BE0}
&D_t \intO{ \left(\frac{1}{2} \vrh |\vuh |^2 + c_v \vrh \vth - \vrh s_h \hvth \right) }
+ \intO{ \frac{\hvth}{\vth }\difuh } - \intfacesint{ \frac{\kappa}{h} \avs{\hvth} \jump{\vth } \jump{\frac{1}{\vth } } }
\br
& \hspace{1cm}
+2 \frac{\kappa}{ h }  \intfacesext{ \frac{ (\vt_h^{\rm in} - \vthB)^2 }{\vt_h^{\rm in}}   }
- \intO{\vrh \vc{g} \cdot  \vuh}
+ D_s(\hvth) + D_{\rm E} - R_{B} (\hvth) + R_{s}(\hvth) 
\br
&\hspace{1cm} +\intO{ \vrh s_h ( D_t \hvth + \vuh \cdot \Gradh \hvth) }
- \intfacesint{ \frac{\kappa}{h} \jump{\vth } \jump{\hvth} \avs{ \frac{1}{\vth } } }= 0,
\end{align}
where $D_{\rm E} \geq 0, D_s(\hvth) \geq 0 \mbox{ for } \hvth \geq 0$.
%\begin{equation}\label{ddd}
%\begin{aligned}
%& D_{\rm E} \geq 0; \quad D_s(\hvth) \geq 0 \mbox{ for } \hvth \geq 0; \\ 
%& \intT{\abs{R_{B}(\hvth)}} \aleq h^{(1-\alpha)/2}, \quad 
%\intT{\abs{R_{s}(\hvth)}} \aleq h^{(1+\alpha)/2} +h^{1+\alpha}, 
%\end{aligned}
%\end{equation}
%see \cite[Lemma A.2]{FeLMShYu:2024}. and Steps 3 and 4 of the proof of \cite[Lemma A.7]{FeLMShYu:2024}. 

Adding zero-term $\int_0^T \psi \cdot \eqref{BE0} \dt$ upon consistency formulation \eqref{cons-4}, we obtain 
\begin{align*}
\myangle{{\it error}_{BE}, \psi} &= \myangle{e_{BE}^1, \psi}  + \myangle{e_{BE}^2, \psi}, \\ 
-e_{E,1}^{h}(\hvt)  = & -E_{B,t}(\psi) + E_{B, \vt} +E_{B, \Grad\vt} + E_{B, res} + \intT{\psi (R_B(\hvth) - R_s(\hvth))},
\br
 e_{E,2}^{h}(\hvt)   =&  \intT{ \psi \big(D_s(\hvth)+D_E\big)}  \geq 0,
\end{align*}
with $E_B = \intOB{ \frac{1}{2} \vrh |\vuh |^2 + c_v \vrh \vth - \vrh s_h \hvt }$ and
{\small
\begin{align*}
E_{B,t}(\psi)  =& 
\intT{\psi D_t  E_B(t) } +  \int_0^T \partial_t \psi  \cdot E_B(t)\dt  - \psi(0)E_B(0), 
\\
E_{B, \vt} =& -\psi(0)\intQ{ \left( \vrh s_h (\hvt - \hvth) \right) (0, \cdot)}  + \intT{\psi\intO{ \frac{(\hvt-\hvth)}{\vth }\difuh}} ,
\br
E_{B, \Grad\vt} 
=& -\frac{\kappa}{ h }  \int_0^{T} \psi \intfacesint{  \PiW (\hvth - \hvt ) \frac{\jump{\vth }^2}{\vth^{\rm out} \vth} } \dt
+2 \frac{\kappa}{ h }  \int_0^{T}\psi \intfacesext{ \frac{ (\vt_h^{\rm in} - \vthB)^2 }{\vt_h^{\rm in}}  \left( \frac{\PiW\hvt }{ \vthB} - 1\right) } \dt,
\br
E_{B, res} = & 
\intT{\psi \intO{ \bigg( \vrh s_h ( \partial_t \hvt - D_t \hvth) + \vrh s_h \vuh \cdot (\Grad \hvt- \Gradh \hvth) \bigg)}}
\\
& -\intT{\psi \intO{  \kappa \frac1{\vth} \; \Gradd \vth \cdot (\Grad \hvt- \Gradd \hvth)} }. 
\end{align*}}
Considering a regular $\psi$, say $\psi \in C^2_c[0,T)$, we can directly employ the estimates \cite[Lemma A.7]{FeLMShYu:2024} to conclude
\begin{align*}
& \abs{E_{B, \vt}} + \abs{E_{B, \Grad\vt}} + \abs{ E_{B, res}}  + \abs{\intT{\psi (R_B(\hvth) - R_s(\hvth))}}  \aleq \norm{\psi}_{L^{\infty}(0,T)} \Big( h + h^{(1-\alpha)/2} + h^{(1+\alpha)/2} \Big).
\end{align*}
In addition, 
\begin{align*}
E_{B,t}(\psi) & = \intT{\psi \pd_t  \widetilde{E_B}(t) } +  \int_0^T \partial_t \psi  \cdot E_B(t)\dt  - \psi(0)\widetilde{E_B}(0)  = - \intT{\pd_t  \psi \cdot \left( \widetilde{E_B}-E_B(t) \right)}
\end{align*}
yields $\abs{E_{B,t}(\psi)} \aleq \norm{\pd_t \psi}_{L^{2}(0,T)}\TS^{1/2}$. 
Altogether, we finish the proof of Lemma~\ref{Lconsbe}. 
\end{proof}

\subsection{Consistency of linear in time interpolants}\label{sec-rmk}
In this section, we show the consistency errors of the piecewise linear approximations $\widetilde{\vrh}, \widetilde{\vrh \vuh}, \widetilde{\vrh \vth} $.
We start by studying the properties of projection operators.
\begin{Lemma}\label{lem-proj}
Let the projection operators $\PiQ, \PiFi, \PiWi, i = 1,\dots, d,$ be defined  in \eqref{proj-avg}, \eqref{projE} and \eqref{projW}. 
Then it holds
\begin{align}\label{proj-es-1}
 \norm{\Pi_X \phi - \phi}_{L^2(\Omega)} \aleq h \norm{\Grad \phi} _{L^2(\Omega)}  \ \mbox{with } \  \Pi_X = \PiQ,  \PiWi, \PiWi \PiQ,  \PiFi, \quad \mbox{for } \phi \in W^{1,2}(\Omega).
\end{align}
%with $\Pi_X = \PiQ,  \PiWi, \PiWi \PiQ,  \PiFi$. 
%{\cgrey Furthermore, it holds
% \begin{align}\label{proj-es-2}
%\abs{\PiQ \phi(x)} + \abs{\PiWi \phi(x)}  \aleq h^{1-d/p} \norm{\phi} _{W^{1,p}(\Omega)}, \quad  x \in \{ y | \dist(y, \pd \Omega) \aleq h \}, \quad p > d,
%\end{align}
%for any $\phi \in W^{1,p}(\Omega), \phi|_{\pd \Omega} = 0$.}
\end{Lemma}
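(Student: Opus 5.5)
We will prove \eqref{proj-es-1} by a standard local (cell-by-cell) Poincaré/trace argument on the uniform mesh, combined with a scaling argument, and then sum over cells. Since every operator $\Pi_X$ under consideration reproduces constants and is defined through averages over a fixed finite patch of cells of size $h$, the estimate reduces to a reference configuration.

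For $\Pi_X = \PiQ$ we fix $K \in \mesh$ and use the Poincaré inequality for mean-zero functions on a cube, $\|\phi - \PiQ\phi\|_{L^2(K)} \le C h \|\Grad \phi\|_{L^2(K)}$ with $C$ independent of $h$ (obtained by scaling from the unit cube). Squaring and summing over $K$ gives the claim. The same argument applied on the dual cells $D_\sigma \in {\cal D}_i$ yields the bound for $\PiWi$, with the modification that boundary dual cells with $\sigma \in \facesext$ are only half contained in $\Omega$. There we either use the half-cell $D_\sigma\cap\Omega$ (still a box of size comparable to $h$) in the definition or note the averaging domain is a box, so Poincaré still applies. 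Since each point of $\Omega$ lies in a bounded number of dual cells, summation costs only a constant.

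For $\PiFi$ we compare the face average with the volume average on the dual cell $D_\sigma$:
\[
\|\phi - \PiFi\phi\|_{L^2(D_\sigma)} \le \|\phi - \PiWi\phi\|_{L^2(D_\sigma)} + |D_\sigma|^{1/2}\,\Big|\tfrac{1}{|\sigma|}\int_\sigma (\phi - \PiWi\phi)\,\ds\Big|.
\]
We then bound the second term by the scaled trace inequality $\|w\|_{L^2(\sigma)}^2 \le C(h^{-1}\|w\|^2_{L^2(D_\sigma)} + h\|\Grad w\|^2_{L^2(D_\sigma)})$ applied to $w=\phi-\PiWi\phi$, together with the Poincaré bound from the previous step. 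This gives $|D_\sigma|^{1/2}|\sigma|^{-1/2}\|w\|_{L^2(\sigma)} \le C h\|\Grad\phi\|_{L^2(D_\sigma)}$, since $|D_\sigma|/|\sigma| = h$. For $\PiWi\PiQ$ we write $\phi - \PiWi\PiQ\phi = (\phi - \PiWi\phi) + \PiWi(\phi - \PiQ\phi)$. The $L^2$-stability $\|\PiWi g\|_{L^2}\le C\|g\|_{L^2}$ (Jensen on each dual cell, bounded overlap) together with the $\PiQ$ estimate finishes the proof.

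The only delicate point is the treatment of boundary dual cells and boundary faces in $\facesext$, where $D_\sigma$ sticks out of $\Omega$. We handle this by restricting to $D_\sigma \cap \Omega$, which is a box of dimensions comparable to $h$, so the scaled Poincaré and trace constants remain uniform. The horizontal periodicity causes no issue.
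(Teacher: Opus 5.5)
Your proof is correct, and its skeleton is the paper's: cellwise Poincar\'e plus scaling for $\PiQ$ and $\PiWi$, then reduction of $\PiWi\PiQ$ and $\PiFi$ to these through $\PiWi$. The real difference is how the face-average step is justified, and there your version is the rigorous one.

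\textbf{The $\PiFi$ step.} The paper writes $\PiFi\phi-\phi=\PiFi(\phi-\PiWi\phi)-(\phi-\PiWi\phi)$. It then bounds the right-hand side by $\norm{\phi-\PiWi\phi}_{L^2(\Omega)}$, citing only the idempotence $(\PiFi)^2=\PiFi$. Idempotence does not make $I-\PiFi$ an $L^2$-contraction. In fact a face average is not controlled by an $L^2$ norm at all: a thin layer of height one along $\sigma$ has small $L^2(D_\sigma)$ norm but face average one. Your scaled trace inequality on $D_\sigma$ is the ingredient that actually bounds $\PiFi(\phi-\PiWi\phi)$. It works because you apply it to $w=\phi-\PiWi\phi$, use $\Grad w=\Grad\phi$ on $D_\sigma$, combine it with the local Poincar\'e bound, and use $|D_\sigma|/|\sigma|=h$.

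\textbf{The $\PiWi\PiQ$ step.} Your splitting $\phi-\PiWi\PiQ\phi=(\phi-\PiWi\phi)+\PiWi(\phi-\PiQ\phi)$, together with the $L^2$-stability of $\PiWi$, is the correct form of the paper's triangle inequality. The paper's stated bound $2\norm{\PiQ\phi-\phi}_{L^2(\Omega)}$ drops the term $\norm{\PiWi\phi-\phi}_{L^2(\Omega)}$. That term is not controlled by $\norm{\PiQ\phi-\phi}_{L^2(\Omega)}$ in general (take $\phi$ close to a step located on primal interfaces). The final $O(h)$ estimate is unaffected.

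\textbf{Boundary dual cells.} Some convention is genuinely needed for the dual cells attached to $\sigma\in\facesext$, which stick out of $\Omega$. Without it the definition of $\PiWi$ is ambiguous there, and the operator need not reproduce constants. Your choice of averaging over $D_\sigma\cap\Omega$ settles this, and the scaled Poincar\'e and trace constants stay uniform. The paper leaves this point implicit.
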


\begin{proof}[Proof of Lemma~\ref{lem-proj}] 
The estimates 
$
\norm{\PiQ \phi - \phi}_{L^2(\Omega)} + \norm{\PiWi \phi - \phi}_{L^2(\Omega)} \aleq h \norm{\Grad \phi} _{L^2(\Omega)} 
$
directly follows from a scaling argument and the Poincar\'e inequality. Applying triangle inequality we have 
\[
\norm{\PiWi\PiQ \phi - \phi}_{L^2(\Omega)} \leq 2 \norm{\PiQ \phi - \phi}_{L^2(\Omega)} \aleq h \norm{\Grad \phi} _{L^2(\Omega)}.
\]
and 
\begin{align*}
\norm{\PiFi \phi - \phi}_{L^2(\Omega)} & = \norm{\PiFi \phi -\PiWi \phi + \PiWi \phi- \phi}_{L^2(\Omega)} = \norm{\PiFi \Big(\phi -\PiWi \phi \Big) - \Big( \phi - \PiWi \Phi \Big)}_{L^2(\Omega)} \\
& \leq  \norm{\phi - \PiWi \Phi}_{L^2(\Omega)} \aleq h \norm{\Grad \phi} _{L^2(\Omega)}.
\end{align*}
Note that we have used the projection property of $\PiFi$, i.e.\ $(\PiFi )^2 = \PiFi$, for the first inequality. 
%{\cgrey It remains to show \eqref{proj-es-2}. Applying the embedding theorem $W^{1,p} \hookrightarrow C^{0,1-d/p}$ with $p>d$ and the boundary condition $\phi|_{\pd \Omega} = 0$ we obtain 
%\[
%\sup_{x \notin \pd \Omega} \frac{\phi(x) - 0}{\dist(x, \pd \Omega)^{1-d/p}} \aleq \norm{\phi}_{W^{1,p}}(\Omega),
%\]
%which gives \eqref{proj-es-2} and finishes the proof.}
\end{proof}

We are proceed to show the consistency estimates \eqref{con-e1a}, \eqref{cone2a}, and \eqref{cons4}.
\medskip

Noticing that $\pd_t \tilde{\phi}_h(t) = D_t \phi_h^k$ a.a.\ for $t\in[0,T]$, we have 
\begin{align*}
& \myangle{h_{\vr}; \phi} =  E_F(\vrh, \phi),\quad 
\myangle{h_{\vm}; \Phi} =  E_F(\vrh \vuh, \Phi) + E_{\vm}(\bfphi),\quad
 \myangle{h_{\vt}; \psi} = E_F(\vrh \vth, \psi) + E_{\vt}(\psi)
\end{align*}
with
\begin{align}
E_F(r_h, \phi)  & = \int_{0}^{T} \intfacesint{ {\bf F}_h^{\alpha}  (r_h ,\vuh ) \cdot \jump{\phi_h} } \dt - \intTO{r_h \vuh  \cdot \Grad \phi } ,\\ 
E_{\vm}(\bfphi) & = \intTO{ \big(\bS_h - p_h \I \big): \Big(\Grad \bfphi -  \Gradh \bfphi_h \Big) },
\\ 
E_{\vt}(\psi) &=   \kappa \intTO{  \Gradd \vth \cdot \Grad \psi } - \int_0^T \intfacesint{  \frac{\kappa}{ h } \jump{\vth^{k}}  \jump{ \psi_h}  }\dt 
\br
& - \int_0^T  \intfacesext{ 2 \frac{\kappa}{ h } \left( (\vt_h^{k})^{\rm in} - \vthB \right)  \psi_h^{\rm in}  }.
\end{align}
%{\cgrey The uniform bounds \eqref{HP} and \eqref{ap} directly gives
% \begin{equation}\label{me-bb}
%E_{\vm}(\bfphi),\ E_{\vt}(\psi) \  \mbox{ bounded in } L^{2}((0,T);W^{- 1,2}(\Omega;\R^d)) \mbox{ uniformly for } h \to 0.
%\end{equation}}
The uniform bounds \eqref{HP} directly gives
 \begin{equation}\label{me-bb}
E_{\vm}(\bfphi)\  \mbox{ bounded in } L^{2}(0,T;W^{- 1,2}(\Omega;\R^d)) \mbox{ uniformly for } h \to 0.
\end{equation}
As $\psi \in L^2(0,T; W^{1,2}_0(\Omega))$, we extend $\psi$ over $\widehat{\Omega} = \mathbb{T}^2\times[-2H,2H]$ with  
\begin{equation*}
\psi(t, x_h,x_3) = \begin{cases}
- \psi(t, x_h, -2H - x_3) \ & \mbox{for} \ x_3 < -H,\\ 
\psi (t, x_h, x_3) \ & \mbox{for}\ x_3 \in [-H,H], \\ 
- \psi(t, x_h, 2H - x_3) \ & \mbox{for} \ x_3 > H. 
\end{cases}
\end{equation*}
Hence, we have $\psi \in  L^2(0,T; W^{1,2}(\widehat{\Omega}))$ and  $\avs{\psi_h}|_{\facesext} = 0$, which gives
\begin{align*}
E_{\vt}(\psi) &=   \kappa \intTO{  \Gradd \vth \cdot \Big(\Grad \psi - \Gradd \psi_h \Big)} 
\end{align*}
and
 \begin{equation}\label{me-bb1}
E_{\vt}(\psi)\  \mbox{ bounded in } L^{2}(0,T;W^{- 1,2}(\Omega;\R^d)) \mbox{ uniformly for } h \to 0.
\end{equation}

The rest is to analyze the convective term $E_F(r_h, \phi)$ with $r_h = \vrh, \vrh \vuh, \vrh \vth$.

\medskip

{\bf Convective term.}
Let us reformulate the convection term into four terms $E_F(r_h, \phi) =\sum_{j=1}^4 E_j(r_h, \phi)$
with
\begin{equation*}
\begin{aligned}
& E_1(r_h, \phi) = \frac12 \int_{0}^{T}  \intfacesint{ |\avs{\vuh} \cdot \vc{n}| \jump{r_h } \jump{ \phi_h} } \dt, \quad
E_3(r_h, \phi)
= \muh \int_{0}^{T}  \intfacesint{ \jump{r_h } \jump{ \phi_h} } \dt ,
\\
& E_2(r_h, \phi) = \frac14 \int_{0}^{T}  \intfacesint{ \jump{\vuh} \cdot \vc{n} \jump{r_h } \jump{ \phi_h} } \dt ,
%\\&
\quad E_4(r_h, \phi) =\int_{0}^{T}  \intTd{ r_h \vuh \cdot \Big(\Grad \phi - \Gradh \phi_h \Big) } \dt.
\end{aligned}
\end{equation*}
Hence, applying H\"older inequality and the uniform bounds \eqref{ap}, we have 
\begin{align*}
\abs{E_1(r_h, \phi)+E_2(r_h, \phi)} & \aleq \left(\int_{0}^{T}  \intfacesint{ \abs{\jump{(\vrh, \vuh, \vth)}}^2 } \dt \right)^{1/2}  \left(\int_{0}^{T}  \intfacesint{ \jump{\phi_h }^2 } \dt \right)^{1/2} \\
& \aleq h^{-\alpha/2} h^{1/2} \norm{\Grad \phi}_{L^2((0,T)\times\Omega; \R^{d})} =  h^{(1-\alpha)/2} \norm{\Grad \phi}_{L^2((0,T)\times\Omega; \R^{d})},
\end{align*}
and
\begin{align*}
\abs{E_3(r_h, \phi)} & \aleq h^{\alpha}\left(\int_{0}^{T}  \intfacesint{ \abs{\jump{(\vrh, \vuh, \vth)}}^2 } \dt \right)^{1/2}  \left(\int_{0}^{T}  \intfacesint{ \jump{\phi_h }^2 } \dt \right)^{1/2} \\
& \aleq h^{(1+\alpha)/2} \norm{\Grad \phi}_{L^2((0,T)\times\Omega; \R^{d})}.
\end{align*}
As for $E_4(r_h, \phi)$ we have 
\begin{align*}
E_4(r_h, \phi) %& = \sum_{K\in \mesh} \sum_{\sigma \in \facesK} |\sigma| r_h \vuh \PiF \phi \cdot \vn - \sum_{K\in \mesh} \sum_{\sigma \in \facesK} |\sigma| r_h \vuh \avs{\phi_h} \cdot \vn\\
& = \int_{0}^{T}  \sum_{K\in \mesh} \sum_{\sigma \in \facesK} |\sigma| r_h \vuh \big( \PiF \phi  - \PiW \phi_h\big) \cdot \vn \dt \\
& = \int_{0}^{T} \intfacesint{ \jump{r_h \vuh} \big(   \phi  - \PiW \phi_h \big)  \cdot \vn} \dt + \int_{0}^{T}  \intfacesext{ r_h^{in}  \vuh^{in} \big(  \phi  - \PiW \phi_h \big) \cdot \vn } \dt.
\end{align*}
Then it holds
\begin{align*}
\abs{E_4(r_h, \phi)} & \aleq   \left(\int_{0}^{T}  \intfacesint{ \abs{\jump{(\vrh, \vuh, \vth)}}^2 } \dt \right)^{1/2}  \left(\int_{0}^{T}  \intfacesint{\big( \phi  - \PiW \phi_h\big)^2 } \dt \right)^{1/2} \\
&\quad  +  \left(\int_{0}^{T}  \intfacesext{ \abs{(\vrh, \vuh, \vth))^{in}}^2 } \dt \right)^{1/2}  \left(\int_{0}^{T}  \intfacesext{\big( \phi  - \PiW \phi_h\big)^2 } \dt \right)^{1/2} \\
& \aleq h^{-\alpha/2} h^{1/2} \norm{\Grad \phi}_{L^2((0,T)\times\Omega; \R^{d})} + h^{1/2} \norm{\Grad \phi}_{L^2((0,T)\times\Omega; \R^{d})} \\
&= \big( h^{(1-\alpha)/2} + h^{1/2} \big) \norm{\Grad \phi}_{L^2((0,T)\times\Omega; \R^{d})}
\end{align*}
for any test function $\phi$ together with extension $\jump{\phi_h}|_{\facesext} = 0$, or any test function $\phi, \phi|_{\pd Q} = 0$ together with extension $\avs{\phi_h}|_{\facesext} = 0$.

Altogether, we have
\begin{align}\label{es-conv}
\abs{E_F(r_h, \phi)} \aleq \left[ h^{(1-\alpha)/2}+h^{(1+\alpha)/2} \right] \norm{\Grad \phi}_{L^2((0,T)\times\Omega; \R^{d})}.
\end{align}
Combine the above with \eqref{me-bb} finishes the proofs.

\subsection{Proof of Lemma \ref{lem_CI}}\label{ap_comp}
\begin{proof}
%We start by showing \eqref{cf1}. %estimate $\left< e_{D\vu}; \bbT \right>$ given in \eqref{cf1}.  
Denoting $\bbT_h = \PiQ \bbT$ %for $\bbT(x) \in \R^{d\times d}_{\rm sym}$ 
and $ \bbT_K = \bbT_h|_{K}$, and using the boundary condition 
$\avs{\vuh}|_{\facesext} =0$, we have
{\small
\begin{equation}\label{edu}
\begin{aligned}
&  \intO{ \Big( \vuh \cdot \Div \bbT+ \Dhuh : \bbT \Big) } 
=   \sum_K \vu_K \cdot \int_{\partial K} \bbT \cdot \vn \ds+  \sum_K  \bbT_K : \sum_{\sigma \in \facesK } |\sigma|\avs{\vuh}  \otimes \vn
\\
& =  - \intfacesint{  \jump{\vuh} \cdot  \bbT \cdot \vn } + \intfacesext{ \vuh^{\rm in} \cdot  \bbT \cdot \vn }
+ \frac12 \sum_K  \bbT_K : \sum_{\sigma \in \facesK } |\sigma|\jump{\vuh}  \otimes \vn
\\
& =  - \intfacesint{  \jump{\vuh} \cdot  \bbT \cdot \vn } + \intfacesext{ \vuh^{\rm in} \cdot  \bbT \cdot \vn }
 +  \intfacesint{ \jump{\vuh}\cdot \avs{\bbT_h} \cdot \vn } + \frac12 \intfacesext{  \jump{\vuh} \cdot \bbT_h^{\rm in} \cdot \vn }
% \\& =  - \intfacesint{ \jump{\vuh} \cdot \big(  \bbT - \avs{\bbT_h} \big)\cdot \vn } - \frac12 \intfacesext{ \jump{\vuh} \cdot ( \bbT - \bbT_h^{\rm in})\cdot \vn }
\\& =  - \intfacesint{ \jump{\vuh} \cdot \big(  \bbT - \avs{\bbT_h} \big)\cdot \vn } + \intfacesext{ \vuh^{\rm in} \cdot ( \bbT - \bbT_h^{\rm in})\cdot \vn }.
\end{aligned}
\end{equation}
}
Applying the H\"older inequality, the uniform bounds \eqref{ap3}, \eqref{ap5} and the projection estimates \eqref{proj-es-1}, we obtain
\begin{align*}
&%\abs{ \left< e_{D\vu}; \bbT \right> }   = 
 \abs{\int_0^T \left(\intfacesint{ \jump{\vuh} \cdot \Big(  \bbT - \avs{\bbT_h} \Big)\cdot \vn } + \intfacesext{ \vuh^{\rm in} \cdot ( \bbT - \bbT_h^{\rm in})\cdot \vn }\right)\dt} 
\\&
\aleq \left(\left(\int_0^T \intfacesint{ \jump{\vuh}^2 }\dt \right)^{1/2} + \left(\int_0^T \intfacesext{ |\vuh^{\rm in}|^2} \dt \right)^{1/2} \right)
h^{1/2} \norm{\bbT}_{L^2(0,T;W^{1,2}(\Omega;\R^{d\times d}))}
\\&
 \aleq  h^{(1-\alpha)/2}, 
\end{align*}
which implies \eqref{cf1}. 
%where $C$ depends on  $ \norm{\bbT}_{W^{1,2}(\Omega;\R^{d\times d})}$ and the data stated in Lemma \ref{lm_ub}.

It remains to show \eqref{cf3}. 
%Secondly, we show \eqref{cf3}. %estimate $\left< e_{\Grad \vt}; \Psi \right>$ given in \eqref{cf3}. 
With the definitions of projection operators \eqref{projW} and the boundary condition $\avs{\vth}_\sigma= \vt_{B,h}|_\sigma $, $\sigma \in \mathcal{E}_{\rm ext}$, we have 
%\begin{align*}
%& \intOB{   \vth \Div \Psi +  \Gradd\vth  \cdot \Psi } 
%= 
%  \sum_K  \vth \int_{\partial K}  \Psi \cdot \vn \ds +  \intfacesint{ \PiW \Psi \cdot \vc{n} \jump{\vth}} 
%  + \frac12\intfacesext{ \PiW \Psi \cdot \vc{n} \jump{ \vth}}
%\\ 
%&=   - \intfacesint{   \Psi \cdot \vc{n} \jump{\vth}} 
%+ \intfacesint{ \PiW \Psi \cdot \vc{n} \jump{\vth}} + \intfacesext{ \PiW \Psi \cdot \vc{n} ( \vthB - \vth^{in})}
%\\
%& = \intfacesint{ (  \PiW \Psi - \Psi)\cdot \vc{n} \jump{\vth}} - \intfacesext{ (  \Psi - \PiW \Psi) \cdot \vc{n} ( \vthB - \vth^{in}) }.
%\end{align*}
\begin{align*}
& \intOB{   
(\vth- \vtB) \Div \Psi +  (\Gradd\vth- \Grad\vtB)  \cdot \Psi } 
\\&= 
  \sum_K  \vth \int_{\partial K}  \Psi \cdot \vn \ds +  \intfacesint{ \PiW \Psi \cdot \vc{n} \jump{\vth}} 
  + \frac12\intfacesext{ \PiW \Psi \cdot \vc{n} \jump{ \vth}}  -\intfacesext{\vtB \Psi \cdot \vn}
\\ 
&=   - \intfacesint{   \Psi \cdot \vc{n} \jump{\vth}}  + \intfacesext{ (\vth^{\rm in}-\vtB) \Psi \cdot \vc{n} } 
\\ & \quad + \intfacesint{ \PiW \Psi \cdot \vc{n} \jump{\vth}} + \intfacesext{ \PiW \Psi \cdot \vc{n} ( \vthB - \vth^{in})}
\\
& = \intfacesint{ (  \PiW \Psi - \Psi)\cdot \vc{n} \jump{\vth}} +\intfacesext{ (  \PiW \Psi - \Psi  ) \cdot \vc{n} ( \vtB - \vth^{in}) }.
\end{align*}

Consequently, owing to the estimate \eqref{ap1}, \eqref{ap5} and \eqref{proj-es-1}, we obtain 
\begin{align*}
&%\abs{\left< e_{\Grad \vt}; \Psi \right>} =
\abs{ \int_0^T \left(\intfacesint{ (  \PiW \Psi - \Psi)\cdot \vc{n} \jump{\vth}} - \intfacesext{ ( \PiW  \Psi - \Psi) \cdot \vc{n} ( \vtB - \vth^{in}) }  \right)\dt}
\\& \aleq
 h^{1/2}\norm{\Psi}_{L^2(0,T;W^{1,2}(\Omega;\R^d))} \left( h^{1/2}\norm{\Gradd \vth}_{L^2((\timezoneA) \times \Omega)} + \left(\int_0^T \intfacesext{( \vtB - \vth^{in})^2} \dt\right)^{1/2} \right)
\aleq h,
\end{align*}	
which implies \eqref{cf3}. 
\end{proof}

\end{document}